\newtheorem{theorem}{Theorem}[section]
\newtheorem{lemma}[theorem]{Lemma}
\newtheorem{proposition}[theorem]{Proposition}
\newtheorem{corollary}[theorem]{Corollary}
\theoremstyle{definition}
\newtheorem{definition}[theorem]{Definition}
\newtheorem{example}[theorem]{Example}
\theoremstyle{remark}
\newtheorem{remark}[theorem]{Remark}
\numberwithin{equation}{section}
\begin{document}

\setcounter{page}{1}

\title[Orthogonality and parallelism of operators on Banach spaces]{Orthogonality and parallelism of operators on various Banach spaces}

\author[T. Bottazzi, C. Conde, M.S. Moslehian, P. W\'ojcik and A. Zamani]{T. Bottazzi$^1$, C. Conde$^{1,2}$, M. S. Moslehian$^3$, P. W\'ojcik$^4$ and A. Zamani$^{3,5}$}

\address{$^1$Instituto Argentino de Matem\'atica ``Alberto P. Calder\'on", Saavedra 15 3U piso, (C1083ACA) Buenos Aires, Argentina}
\email{tpbottaz@ungs.edu.ar}

\address{$^2$Instituto de Ciencias, Universidad Nacional de Gral. Sarmiento, J. M. Gutierrez 1150, (B1613GSX) Los Polvorines, Argentina}
\email{cconde@ungs.edu.ar}

\address{$^3$Department of Pure Mathematics, Ferdowsi University of Mashhad, Center of Excellence in
Analysis on Algebraic Structures (CEAAS), P.O. Box 1159, Mashhad 91775, Iran}
\email{moslehian@um.ac.ir, moslehian@member.ams.org}

\address{$^4$Institute of Mathematics, Pedagogical University of Cracow, Podchor\c a\.zych 2, 30-084 Krak\'ow, Poland}
\email{pwojcik@up.krakow.pl}

\address{$^5$Department of Mathematics, Farhangian University, Iran}
\email{zamani.ali85@yahoo.com}

\subjclass[2010]{Primary: 47A30; Secondary: 46B28, 46B20, 51F20.}

\keywords{Orthogonality; norm--parallelism; Schatten $p-$norm; disjoint supports; trace; compact operator.}
 \begin{abstract}
We present some properties of orthogonality and relate them with support disjoint and norm inequalities in $p-$Schatten ideals.
In addition, we investigate the problem of characterization of norm--parallelism for bounded linear operators.
We consider the characterization of norm--parallelism problem in $p-$Schatten ideals and locally uniformly convex spaces.
Later on, we study the case when an operator is norm--parallel to the identity operator.
Finally, we give some equivalence assertions about the norm--parallelism of compact operators.
Some applications and generalizations are discussed for certain operators.
\end{abstract}
\maketitle

\section{Introduction}
Let $(\mathcal{X}, \|\cdot\|)$ be a normed space over $\mathbb{K} \in \{\mathbb{R}, \mathbb{C}\}$.
The orthogonality between two vectors of $\mathcal{X}$, may be defined in several ways.
The so-called \textit{Birkhoff--James} orthogonality reads as follows (see \cite{B, J}):
for $x,y \in \mathcal{X}$ it is said that $x$ is \textit{Birkhoff--James orthogonal} (B-J) to $y$, denoted by $x \perp_{BJ} y$, whenever
\begin{equation} \label{defiBJeq}
\|x\|\leq \|x + \gamma y\|
\end{equation}
for all $\gamma \in \mathbb{K}$. If $\mathcal{X}$ is an inner product space, then B-J orthogonality is equivalent
to the usual orthogonality given by the inner product.
It is also easy to see that B-J orthogonality is nondegenerate, is homogeneous, but it is neither symmetric nor additive.

There are other definitions of orthogonality with different properties. We focus in particular the previous notion and isosceles orthogonality, which is defined as follows: in a real normed space $\mathcal{X}$, we say that $x\in \mathcal{X}$ is \textit{isosceles orthogonal} to $y\in \mathcal{X}$ (see \cite{J}) and we write $x \perp_{I} y$ whenever
\begin{equation} \label{defiI}
\|x+y\|= \|x-y\|.
\end{equation}
In complex normed spaces one have to consider the following orthogonality relation
\begin{equation} \label{defiI2}
x\perp_{I}y\Leftrightarrow \left\{
\begin{array}{c l r l}
 \|x+y\|= \|x-y\|\\
 \|x+iy\|= \|x-iy\|. \\
\end{array}
\right.
\end{equation}
For a recent account of the theory of orthogonality in normed linear spaces we refer the reader to \cite{A.M.W}.

Orthogonality in the setting of Hilbert space operators has attracted attention of several mathematicians.
We cite some papers which are closer to our results in chronological order.
Stampfli \cite{stampfli} characterized when an operator is B-J orthogonal to the identity operator.
This result was generalized to any pair of operators by Magajna \cite{magajna}.
In \cite{kittaneh_laa_1991}, Kittaneh gave necessary and sufficient conditions such that $I\perp_{BJ} A$ for $p-$norm in a finite dimensional context.
Also for matrices, Bhatia and {\v{S}}emrl \cite{B.S} obtained a generalization of Kittaneh's result and other statements concerning the spectral norm. More recently, some other authors studied different aspects of orthogonality of bounded linear operators and elements of an arbitrary Hilbert $C^*$-module, for instance, see \cite{benitez-fernandez-soriano, B.G, P.S.G, sain-paul-hait}.

Furthermore, we say that $x\in \mathcal{X}$ is \textit{norm--parallel} to $y\in \mathcal{X}$ (see \cite{S, Z.M.2}), in short $x\parallel y$, if there exists $\lambda\in\mathbb{T}=\{\alpha\in\mathbb{K}: \,\,|\alpha|=1\}$ such that
\begin{equation} \label{defiparallel}
\|x + \lambda y\| = \|x\| + \|y\|.
\end{equation}
In the framework of inner product spaces, the norm--parallel relation is exactly the usual vectorial parallel relation, that is,
$x\parallel y$ if and only if $x$ and $y$ are linearly dependent. In the setting of normed linear spaces, two linearly
dependent vectors are norm--parallel, but the converse is false in general.
To see this consider the vectors $(1, 0)$ and $(1, 1)$ in the space $\mathbb{C}^2$ with the max--norm.
Notice that the norm--parallelism is symmetric and $\mathbb{R}$-homogenous, but not transitive
(i.e., $x\parallel y$ and $y\parallel z \nRightarrow x\parallel z$; see \cite[Example 2.7]{Z.M.2}, unless $\mathcal{X}$ is smooth at $y$; see \cite[Theorem 3.1]{W}).
It was shown in \cite{Z.M.2} that the following relation between the norm--parallelism and B-J orthogonality is valid:
\begin{equation} \label{rel.N-P.BJ}
x\parallel y \Leftrightarrow \big(x\perp_{BJ}(\|y\|x + \lambda \|x\|y)\text{~for some~} \lambda\in\mathbb{T}\big ).
\end{equation}
Some characterizations of the norm--parallelism for Hilbert space operators
and elements of an arbitrary Hilbert $C^*$-module were given in \cite{G, W, Z, Z.M.1, Z.M.2}.

We briefly describe the contents of this paper. Section 2 contains basic definitions, notation and some preliminary results.
In section 3, we present some properties of orthogonality and relate them with support disjoint and norm inequalities in $p-$Schatten ideals.
In the last section, we restrict our attention to the problem of characterization of norm--parallelism for bounded linear operators.
We first consider the characterization of norm--parallelism problem in $p-$Schatten ideals and locally uniformly convex spaces.
Later on we investigate the case when an operator is norm--parallel to the identity operator.
Finally, we give some equivalence assertions about the norm--parallelism of compact operators.
Some applications and generalizations are discussed for certain operators.
\section{Preliminaries}
Throughout the paper, $\mathcal{X}$, $\mathcal{Y}$ stand for normed spaces. Further, $(\mathcal{H}, \langle \cdot, \cdot\rangle)$ denotes a separable complex Hilbert space.
We write $\mathbb{B}_\mathcal{X}$ and $\mathbb{S}_\mathcal{X}$, respectively,
to show the closed unit ball and the unit sphere of $\mathcal{X}$. The (topological) dual of
$\mathcal{X}$ is denoted by $\mathcal{X}^*$. If there is a unique supporting hyperplane at each point of $\mathbb{S}_\mathcal{X}$, then $\mathcal{X}$ is said to be smooth.
A space $\mathcal{X}$ is said to be strictly convex if every element of $\mathbb{S}_\mathcal{X}$ is an extreme
point of $\mathbb{B}_\mathcal{X}$. Further, $\mathcal{X}$ is said to be uniformly convex if for any sequences $\{x_n\}$ and $\{y_n\}$
in $\mathbb{B}_\mathcal{X}$ with $\lim\limits_{n\rightarrow\infty}\|x_n + y_n\| = 2$, we have $\lim\limits_{n\rightarrow\infty}\|x_n - y_n\| = 0$.
The concept of strictly convex and uniformly convex spaces have been extremely
useful in the study of the geometry of Banach Spaces (see \cite{J.2}).

Let $\mathbb{B}(\mathcal{X}, \mathcal{Y})$ and $\mathbb{K}(\mathcal{X}, \mathcal{Y})$ denote the Banach spaces of all bounded operators and all compact operators equipped with the operator norm, respectively.
We write $\mathbb{K}(\mathcal{X}, \mathcal{Y}) = \mathbb{K}(\mathcal{X})$ and $\mathbb{B}(\mathcal{X}, \mathcal{Y}) = \mathbb{B}(\mathcal{X})$ if $\mathcal{X} =\mathcal{Y}$. The symbol $I$ stands for the identity operator on $\mathcal{X}$. In addition, we denote by $\mathbb{M}_A$ the set of all unit vectors at which $A$ attains its norm, i.e., $\mathbb{M}_A = \{x\in \mathbb{S}_\mathcal{X};\,\, \|Ax\| = \|A\|\}.$

For $A \in \mathbb{B}(\mathcal{H})$ we use $R(A)$ and $\ker(A)$, respectively, to
denote the range and kernel of $A$.
We say that $A \leq B$ whenever $\langle Ax, x\rangle \leq \langle Bx, x\rangle$
for all $x\in \mathcal{H}$. An element $A\in \mathbb{B}(\mathcal{H})$ with $A\geq 0$ is called positive. For any $\mathcal{L}\subseteq \mathbb{B}(\mathcal{H})$, ${\mathcal{L}}^+$ denotes the subset of all positive operators of $\mathcal{L}$.

For any compact operator $A\in \mathbb{K}(\mathcal{H})$, let $s_1(A), s_2(A),\cdots $ be the singular values of $A$, i.e.
the eigenvalues of the ``absolute value-norm" $|A| = (A^*A)^{\frac {1}{2}}$ of $A$, in decreasing order and repeated
according to multiplicity. Here $A^*$ denotes the adjoint of $A$. If $A\in \mathbb{K}(\mathcal{H})$ and $p>0$, let
\begin{equation} \label{defipllel}
{\|A\|}_p = \left(\sum_{i = 1}^\infty s_i(A)^p\right)^{\frac{1}{p}} = \left({\rm tr}|A|^p\right)^{\frac{1}{p}},
\end{equation}
where $\rm tr$ is the usual trace functional, i.e.
${\rm tr}(A)=\sum_{j=1}^{\infty} \langle Ae_j,e_j\rangle,$
where $\{e_j\}_{j=1}^{\infty}$ is an orthonormal basis of $\mathcal{H}$. Equality \eqref{defipllel}
defines a norm (quasi-norm) on the ideal $\mathbb{B}_p(\mathcal{H}) = \{A\in \mathbb{K}(\mathcal{H}): {\|A\|}_p<\infty\}$ for $1\leq p<\infty$ ($0< p <1$), called the
\textit{$p$-Schatten class}.
It is known that the so-called \textit{Hilbert-Schmidt class} $\mathbb{B}_2(\mathcal{H})$ is a Hilbert space under the inner product $\langle A, B \rangle_{HS} := {\rm tr}(B^*A)$.
The ideal $\mathbb{B}_1(\mathcal{H})$ is called the \textit{trace class}. It is not reflexive and, in particular, is not a uniformly convex space, because it
contains a subspace isomorphic to $l_1$ (the subspace can be chosen to be the operators diagonal with respect to a
given orthonormal basis of $\mathcal{H}$).

According to \cite{arazy}, we define the concept of disjoint supports as follows.
\begin{definition} Let $A \in \mathbb{B}(\mathcal{H})$. The right support $r(A)$ of $A$ is the orthogonal projection of $\mathcal{H}$ onto
$\ker(A)^{\perp}=\{h\in \mathcal{H}:\, \langle h, x \rangle=0 \text{~for all~} x\in \ker(A)\}$ and the left support $ l(A)$ of $A$ is the orthogonal projection of
$\mathcal{H}$ onto $\overline{R(A)}$.
Two operators $A, B \in \mathbb{B}(\mathcal{H})$ have:
\begin{itemize}
\item[(1)] \textit{right disjoint supports} if and only if $r(A)r(B)=0$.
\item[(2)] \textit{left disjoint supports} if and only if $l(A)l(B)=0$.
\item[(3)] \textit{disjoint supports} if and only if $r(A)r(B)=0$ and $l(A)l(B)=0$.
\end{itemize}
\end{definition}
Let us recall that if $P_{N}$ denotes the orthogonal projection onto the closed subspace $N$ of $\mathcal{H}$, then $P_{N}P_{Q}=0$
if and only if $N\perp Q$ (i.e. $\langle x, y\rangle=0$ for all $x\in N, y\in Q$).
We observe that $r(A)r(B)=0$ if and only if $AB^*=0$ and similarly with the left disjoint support.
Consequently, two operators $A, B$ have
disjoint supports if and only if $R(A)\perp R(B)$ and $R(A^*)\perp R(B^*)$.

The following is a well-known result, which we use in the present article.
Let $A, B \in \mathbb{B}_p(\mathcal{H})$. Then, the following conditions are equivalent:
\begin{itemize}
\item[(1)] $A^*AB^*B = 0$ (or $AA^*BB^*=0$).
\item[(2)] $|A||B|=|B||A|=|A^*||B^*|=|B^*||A^*|=0$.
\item[(3)] $A$ and $B$ have disjoint supports.
\end{itemize}
The classical Clarkson--McCarthy inequalities assert that if
$A,B\in\mathbb{B}_p(\mathcal{H})$, then
\begin{equation}\label{Clarkson1}
{\|A + B\|}_p^p + {\|A - B\|}_p^p \leq 2({\|A\|}_p^p + {\|B\|}_p^p)
\end{equation}
for $1\leq p\leq2$, and
\begin{equation} \label{Clarkson2}
{\|A + B\|}_p^p + {\|A - B\|}_p^p \geq 2({\|A\|}_p^p + {\|B\|}_p^p)
\end{equation}
for $2\leq p<\infty$. If $p=2$, equality always holds and if $p\neq 2$, equality holds if and only if $A^*AB^*B =0$. This inequality implies the uniform convexity of $\mathbb{B}_p(\mathcal{H})$ for $1 < p < \infty$. If $0<p\leq 1$, then
\begin{equation} \label{Clarkson3}
{\|A+B\|}_p^p\leq {\|A\|}_p^p + {\|B\|}_p^p.
\end{equation}
Furthermore, if $p<1$ the previous equality holds if and only if $A^*AB^*B=0$.
The previous inequality motivates us to study conditions such that equality in \eqref{Clarkson3} holds for $1\leq p<\infty$.
These and more results related with these ideals can be seen, for instance, in \cite{mccarthy} and \cite{gohberg-krein}.
\section{Orthogonality in $\mathbb{B}_p(\mathcal{H})$}
\subsection{Birkhoff--James and isosceles orthogonality in $\mathbb{B}_p(\mathcal{H})$ ideals}
$\quad$

Let $\mathbb{B}_p(\mathcal{H})$ be a $p$-Schatten ideal with $p>0$. Using \eqref{defiBJeq} the Birkhoff--James orthogonality for any $A,B\in \mathbb{B}_p(\mathcal{H})$ is
$$A\perp_{BJ}^p B \Leftrightarrow \big({\|A\|}_p\leq {\| A + \gamma B\|}_p \text{~for all~} \gamma\in \mathbb{C}\big).$$
The following result is a direct consequence of \cite[Lemma 3.1]{maher}, which relates the concept of disjoint supports with B-J orthogonality in $p$-Schatten ideal. It is true for any symmetric ideal associated to a unitarily invariant norm, but in this paper we restrict ourselves to the $p$-Schatten ideals to unify our study.
\begin{proposition} \label{soporte implica bj}
If $A+B\in\mathbb{B}_p(\mathcal{H})$ with $1 \leq p<\infty$ and $R(A)\perp R(B)$, then $A,B\in \mathbb{B}_p(\mathcal{H})$ and
$A\perp_{BJ}^p B$ and $B\perp_{BJ}^p A$.
\end{proposition}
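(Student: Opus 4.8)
The plan is to reduce the statement to two ingredients: a short ideal argument establishing $A,B\in\mathbb{B}_p(\mathcal{H})$, and a single singular-value comparison that delivers both orthogonality relations at once. The first thing I would do is recast the geometric hypothesis algebraically. Since $R(A)\perp R(B)$ means $\langle Au,Bv\rangle=0$ for all $u,v\in\mathcal{H}$, it is equivalent to $B^*A=0$, and hence, by taking adjoints, to $A^*B=0$ as well.

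Next I would prove that $A$ and $B$ individually lie in $\mathbb{B}_p(\mathcal{H})$. Let $P=l(A)$ and $Q=l(B)$ denote the left supports. Because $R(A)\perp R(B)$, one has $PA=A$ and $PB=0$ (the latter since $B$ maps into $R(B)\subseteq\overline{R(B)}$, which is orthogonal to $\overline{R(A)}$), so that $P(A+B)=A$, and symmetrically $Q(A+B)=B$. As $\mathbb{B}_p(\mathcal{H})$ is an ideal of $\mathbb{B}(\mathcal{H})$ satisfying ${\|ST\|}_p\le\|S\|\,{\|T\|}_p$, the assumption $A+B\in\mathbb{B}_p(\mathcal{H})$ yields ${\|A\|}_p={\|P(A+B)\|}_p\le{\|A+B\|}_p<\infty$, and likewise for $B$; hence $A,B\in\mathbb{B}_p(\mathcal{H})\subseteq\mathbb{K}(\mathcal{H})$, and in particular all three operators are compact.

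The core computation is then immediate: for every $\gamma\in\mathbb{C}$,
\begin{equation*}
(A+\gamma B)^*(A+\gamma B)=A^*A+\bar\gamma B^*A+\gamma A^*B+|\gamma|^2B^*B=|A|^2+|\gamma|^2|B|^2,
\end{equation*}
the cross terms vanishing by $A^*B=B^*A=0$. Thus $|A+\gamma B|^2=|A|^2+|\gamma|^2|B|^2\ge|A|^2$ in the order of positive operators. Invoking Weyl's monotonicity principle for the eigenvalues of compact positive operators, I get $s_i(A+\gamma B)^2\ge s_i(A)^2$, i.e. $s_i(A+\gamma B)\ge s_i(A)$ for every $i$. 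Raising to the $p$-th power and summing gives ${\|A+\gamma B\|}_p^p\ge{\|A\|}_p^p$ for all $\gamma\in\mathbb{C}$, which is exactly $A\perp_{BJ}^p B$; the relation $B\perp_{BJ}^p A$ follows verbatim from $(B+\gamma A)^*(B+\gamma A)=|B|^2+|\gamma|^2|A|^2\ge|B|^2$.

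The only step I expect to require care is the eigenvalue monotonicity $S\le T\Rightarrow s_i(S)\le s_i(T)$: it relies on the min--max characterization of eigenvalues, which is legitimate precisely because $A,B$ were shown to be compact in the second step, so that $|A|^2,|A+\gamma B|^2$ are compact positive operators with discrete spectrum. Everything else is routine, and notably the argument uses only $R(A)\perp R(B)$ (left disjoint supports), never needing disjointness of the right supports.
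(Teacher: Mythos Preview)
Your argument is correct. The paper itself does not give a detailed proof of this proposition: it simply states that the result is a direct consequence of \cite[Lemma~3.1]{maher} and moves on. Your proof is therefore strictly more informative than what appears in the paper.

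The route you take is the natural one and almost certainly the one underlying Maher's lemma as well: the identity $|A+\gamma B|^2=|A|^2+|\gamma|^2|B|^2$ obtained from $A^*B=B^*A=0$, followed by Weyl monotonicity $0\le S\le T\Rightarrow \lambda_i(S)\le\lambda_i(T)$ for compact positive operators, is the standard mechanism behind such Pythagorean-type inequalities in Schatten classes. Your preliminary step showing $A,B\in\mathbb{B}_p(\mathcal{H})$ via the ideal property and the left support projection is clean and correctly isolates the compactness needed to invoke the min--max principle. Your closing remark that only left disjoint supports (i.e., $R(A)\perp R(B)$) are used, never right disjointness, is accurate and matches exactly the hypothesis of the proposition.
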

The following statement is a reverse of Proposition \ref{soporte implica bj} for positive operators in $\mathbb{B}_p(\mathcal{H})$.
\begin{theorem} \label{bj implica soporte}
Let $1<p<\infty$ and $A,B\in{\mathbb{B}_p(\mathcal{H})}^+$. If $A\perp_{BJ}^p B$, then $A$ and $B$ have disjoint supports.
\end{theorem}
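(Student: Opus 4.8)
The plan is to convert Birkhoff--James orthogonality into the vanishing of a single trace functional and then to extract disjointness from positivity. By the equivalences recorded in Section~2, since $A,B\ge 0$ we have $|A|=A$, $|B|=B$, and $A$ and $B$ have disjoint supports precisely when $A^*AB^*B=A^2B^2=0$; so it suffices to prove $AB=0$. I may assume $A\neq 0$, the case $A=0$ being trivial.

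First I would establish that $\mathbb{B}_p(\mathcal{H})$ is smooth for $1<p<\infty$. Its dual is isometrically $\mathbb{B}_q(\mathcal{H})$, with $\tfrac1p+\tfrac1q=1$, under the trace pairing, and since $1<q<\infty$ the Clarkson--McCarthy inequalities make $\mathbb{B}_q(\mathcal{H})$ uniformly convex (as noted in Section~2), hence strictly convex. As strict convexity of the dual forces smoothness of the space, $\mathbb{B}_p(\mathcal{H})$ is smooth. In a smooth space the classical theorem of James says that $X\perp_{BJ}Y$ holds if and only if the \emph{unique} norming functional at $X$ annihilates $Y$. So I would next exhibit this functional explicitly at our positive $A$: define $f_A(X)=\|A\|_p^{1-p}\,\mathrm{tr}\!\big(A^{p-1}X\big)$. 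Using the trace duality $\big\|X\mapsto \mathrm{tr}(CX)\big\|=\|C\|_q$ together with $\|A^{p-1}\|_q=\|A\|_p^{p-1}$ (valid because $(p-1)q=p$, so $\mathrm{tr}\,A^{(p-1)q}=\mathrm{tr}\,A^p$), one checks immediately that $\|f_A\|=1$ and $f_A(A)=\|A\|_p^{1-p}\,\mathrm{tr}(A^p)=\|A\|_p$. Thus $f_A$ is the norming functional at $A$, and by smoothness it is the only one; consequently $A\perp_{BJ}^p B$ forces $f_A(B)=0$, that is, $\mathrm{tr}\!\big(A^{p-1}B\big)=0$.

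Finally I would turn this scalar identity into $AB=0$ using positivity. Since $A^{p-1}\ge 0$ and $B\ge 0$, cyclicity of the trace gives $\mathrm{tr}\!\big(A^{p-1}B\big)=\mathrm{tr}\!\big(A^{(p-1)/2}BA^{(p-1)/2}\big)=\big\|B^{1/2}A^{(p-1)/2}\big\|_2^2\ge 0$, so its vanishing yields $A^{(p-1)/2}B^{1/2}=0$. Because $\ker\!\big(A^{(p-1)/2}\big)=\ker(A)$ for the positive operator $A$, the range of $B^{1/2}$ lies in $\ker(A)$, whence $AB^{1/2}=0$ and therefore $AB=AB^{1/2}B^{1/2}=0$ (and $BA=(AB)^*=0$). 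Then $A^2B^2=A(AB)B=0$, so by the Section~2 equivalence $A$ and $B$ have disjoint supports, as claimed.

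The main obstacle, and the place where the hypothesis $1<p<\infty$ enters essentially, is the passage from orthogonality to the trace condition: it rests on the \emph{uniqueness} of the norming functional, i.e.\ on smoothness of $\mathbb{B}_p(\mathcal{H})$, which breaks down at the endpoints $p=1$ and $p=\infty$. Everything else is a Hölder/trace-duality computation plus the positivity argument. I would note that working with positive $A$ is what lets me write the norming functional directly in terms of $A^{p-1}$ and avoid the partial isometry from the polar decomposition that would appear for general $A=U|A|$; the spectral bookkeeping for a non-injective $A$ causes no difficulty since the trace only sees the support of $A$.
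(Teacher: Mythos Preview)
Your proof is correct and follows the same overall architecture as the paper: first extract the trace identity $\mathrm{tr}(A^{p-1}B)=0$ from Birkhoff--James orthogonality, then use positivity to upgrade this to $AB=0$. The paper reaches the trace identity by invoking Abatzoglou's derivative formula for $\gamma\mapsto\|A+\gamma B\|_p$ at the minimizer $\gamma=0$, whereas you argue via smoothness of $\mathbb{B}_p(\mathcal{H})$ and the uniqueness of the norming functional; these are two faces of the same fact (G\^ateaux differentiability of the norm is equivalent to smoothness), so the content is identical. In the second half the paper goes from $A^{p-1}B=0$ to $AB=0$ by simultaneously diagonalizing the commuting compact positives $A^{p-1}$ and $B$ and reading off $\mathrm{tr}(AB)=0$; your kernel argument $\ker(A^{(p-1)/2})=\ker(A)$ is a cleaner shortcut that avoids the explicit eigenbasis computation but yields the same conclusion.
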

\begin{proof}
Define the real valued function
$$f(\gamma)=\|A + \gamma B\|_p.$$
Without loss of generality, we may assume that $A\neq 0$. By the hypothesis, $f$ attains its minimum at $\gamma = 0$ and so, by \cite[Theorem 2.3]{abatzoglou},
$$0=\left. \frac{d}{d\gamma}\|A + \gamma B\|_p\right|_{\gamma = 0} = {\rm tr}\left(\frac{A^{p-1}B}{\|A\|_p^{p-1}} \right),$$
whence ${\rm tr}\left(A^{p-1}B \right)=0$. We observe that
\begin{align*}
0 & = {\rm tr}\left(A^{p-1}B \right) = {\rm tr}\left(B^{1/2}\left( A^{p-1}\right)^{1/2}\left( A^{p-1}\right)^{1/2}B^{1/2} \right)\\
& = {\rm tr}\left[\left(( A^{p-1})^{1/2}B^{1/2}\right)^*\left(( A^{p-1})^{1/2}B^{1/2}\right)\right]\\
& =\left\langle \left( A^{p-1}\right)^{1/2}B^{1/2},\left( A^{p-1}\right)^{1/2}B^{1/2} \right\rangle_{HS},
\end{align*}
which implies that $\left( A^{p-1}\right)^{1/2}B^{1/2}=0$. Thus,
$$A^{p-1}B=\left( A^{p-1}\right)^{1/2}\left( A^{p-1}\right)^{1/2}B^{1/2}B^{1/2}=0.$$
Since $A, B$ are compact operators, there exists an orthonormal basis $\{e_i\}_{i\in \mathcal{I}}$ of $\mathcal{H}$ such that
$$A^{p-1} = \sum_{i\in \mathcal{I}}\alpha_i e_i\otimes e_i\quad {\rm and}\quad B = \sum_{i\in \mathcal{I}}\beta_i e_i\otimes e_i,$$
with $\alpha_i,\beta_i\geq 0$ for every $i\in \mathcal{I}$. Hence and $\alpha_i\beta_i=0$ for every $i\in \mathcal{I}$. Finally, we have
$${\rm tr}(AB) = {\rm tr}\left[ \left( \sum_{i\in \mathcal{I}}\alpha_i e_i\otimes e_i\right)^{\frac{1}{p-1}}\left( \sum_{i\in \mathcal{I}}\beta_i e_i\otimes e_i\right)\right] = \sum_{i\in \mathcal{I}}\alpha_i^{\frac{1}{p-1} }\beta_i = 0$$
and this implies that
$$0={\rm tr}(AB) ={\rm tr}\left( \left( A^{1/2}B^{1/2}\right)^* A^{1/2}B^{1/2} \right)=\|A^{1/2}B^{1/2}\|_2^2.$$
\end{proof}
\begin{remark}\label{r.e.m}
The previous result does not hold if $A$ or $B$ are not positive operators. For instance, let $A = \begin{bmatrix}
1/2&0\\
0&-1/2
\end{bmatrix}$ and $I = \begin{bmatrix}
1 & 0 \\
0 & 1
\end{bmatrix}$. Then by Corollary \ref{cr.0248}, since ${\rm tr}(A) = 0$ we get $I\perp_{BJ}^p A$.
Nevertheless, $I$ and $A$ clearly do not have disjoint supports.\\
Furthermore, the previous result does not hold either for $p=1$ or $p=\infty$. Let $B = \begin{bmatrix}
1&0\\
0&0
\end{bmatrix}$. Then it is easy to see that $B\perp_{BJ}^{1} I$ and $I\perp_{BJ}^{\infty} A$.
Nevertheless, $I$ and $B$ clearly do not have disjoint supports.
\end{remark}
In \cite{busch}, it was proved that for any $A, B \in {\mathbb{B}_1(\mathcal{H})}^+$ it holds that
\begin{eqnarray}\label{msm1}
A\perp_{I}^1 B \Leftrightarrow AB = BA =0\,.
\end{eqnarray}
From the positivity of $A, B$ and $A + B$ we infer that
$${\|A + B\|}_1 = {\rm tr}(A + B) = {\rm tr}(A) + {\rm tr}(B) = {\|A\|}_1 + {\|B\|}_1.$$
Recently, Li and Li \cite{li-li} gave a characterization of disjoint supports for operators in the trace class ideal. In the same direction, we present the following result.
\begin{theorem} \label{equivalencia soporte normas}
Let $0<p<\infty $ and $A,B\in \mathbb{B}_p(\mathcal{H})$. Then, the following statements are equivalent:
\begin{itemize}
\item[(i)] $A$ and $B$ have disjoint supports.
\item[(ii)] ${\|\lambda A - \mu B\|}_p^p = {\|\lambda A +\mu B\|}_p^p = |\lambda|^p{\|A\|}_p^p +|\mu|^p {\|B\|}_p^p$, for any $\lambda,\mu\in \mathbb{C}$.
\item[(iii)] ${\|A - B\|}_p^p = {\|A + B\|}_p^p = {\|A\|}_p^p + {\|B\|}_p^p$.
\end{itemize}
\end{theorem}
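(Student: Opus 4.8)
The plan is to run the cycle (i) $\Rightarrow$ (ii) $\Rightarrow$ (iii) $\Rightarrow$ (i); the middle implication is just the specialization $\lambda=\mu=1$, so the work lies at the two ends.

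For (i) $\Rightarrow$ (ii) I would first record what disjoint support buys us through the equivalence stated before the theorem: left disjointness gives $A^*B=B^*A=0$, and right disjointness gives $A^*AB^*B=0$, i.e. $|A|^2$ and $|B|^2$ commute and have orthogonal ranges. The first pair collapses the expansion
\[
(\lambda A+\mu B)^*(\lambda A+\mu B)=|\lambda|^2A^*A+\bar\lambda\mu\,A^*B+\bar\mu\lambda\,B^*A+|\mu|^2B^*B=|\lambda|^2|A|^2+|\mu|^2|B|^2,
\]
and the second lets me take the square root termwise, so that $|\lambda A+\mu B|=|\lambda|\,|A|+|\mu|\,|B|$ is a sum of two positive operators with orthogonal supports. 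Raising to the $p$-th power and tracing then splits additively,
\[
\|\lambda A+\mu B\|_p^p={\rm tr}\left((|\lambda|\,|A|+|\mu|\,|B|)^p\right)=|\lambda|^p{\rm tr}|A|^p+|\mu|^p{\rm tr}|B|^p=|\lambda|^p\|A\|_p^p+|\mu|^p\|B\|_p^p,
\]
and since only $|\mu|^p$ survives, the $\lambda A-\mu B$ identity comes for free. This argument is valid for every $p>0$.

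The real content is (iii) $\Rightarrow$ (i), which I would settle by cases via the Clarkson--McCarthy inequalities, reading their equality clauses as the \emph{full} disjoint support condition $A^*AB^*B=AA^*BB^*=0$ (the one-sided version $A^*AB^*B=0$ alone does not suffice). For $0<p<1$ the single relation $\|A+B\|_p^p=\|A\|_p^p+\|B\|_p^p$ already saturates \eqref{Clarkson3} and hence forces disjoint supports. For $1\le p<\infty$ with $p\neq2$, adding the two relations in (iii) yields $\|A+B\|_p^p+\|A-B\|_p^p=2(\|A\|_p^p+\|B\|_p^p)$, which is the equality case of \eqref{Clarkson1} when $1\le p\le2$ and of \eqref{Clarkson2} when $2\le p<\infty$; in either case the equality clause delivers (i).

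The main obstacle is the value $p=2$. There \eqref{Clarkson1} and \eqref{Clarkson2} are the parallelogram law and hold with equality for all operators, so their equality clauses say nothing; condition (iii) degenerates to ${\rm Re}\,{\rm tr}(B^*A)=0$, i.e. Hilbert--Schmidt orthogonality, which is strictly weaker than disjoint support (take $A=e_1\otimes e_1$ and $B=e_1\otimes e_2$, for which ${\rm tr}(B^*A)=0$ and even condition (ii) holds, yet $R(A)=R(B)$). Consequently the Clarkson--McCarthy route cannot close the cycle at $p=2$, and I expect this exceptional case to require either its exclusion or a genuinely different device; that is where the difficulty sits.
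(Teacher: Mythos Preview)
Your cycle (i)$\Rightarrow$(ii)$\Rightarrow$(iii)$\Rightarrow$(i) is exactly the paper's route. For (i)$\Rightarrow$(ii) the paper simply invokes \cite[Theorem~1.7]{maher2}, whereas you unfold the computation of $|\lambda A+\mu B|$ by hand; your version has the advantage of showing precisely where each half of the disjoint-support hypothesis (left for $A^*B=0$, right for $|A|\,|B|=0$) is used. For (iii)$\Rightarrow$(i) both you and the paper add the two equalities in (iii) and invoke the equality clause of the Clarkson--McCarthy inequalities.

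Your reservation about $p=2$ is not merely a difficulty in the argument but a genuine defect in the \emph{statement}, and the paper's proof glosses over it. Your counterexample $A=e_1\otimes e_1$, $B=e_1\otimes e_2$ is valid: one has ${\rm tr}(B^*A)=0$, hence $\|\lambda A\pm\mu B\|_2^2=|\lambda|^2+|\mu|^2$ for all $\lambda,\mu\in\mathbb{C}$, so (ii) and (iii) both hold; yet $R(A)=R(B)=\mathrm{span}\{e_1\}$, so (i) fails. The paper's appeal to \cite[Theorem~2.7]{mccarthy} is empty at $p=2$ because the inequality degenerates to the parallelogram identity; the equivalence as written is correct only for $p\neq 2$. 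Your parenthetical remark that the equality clause must deliver the \emph{two-sided} condition $A^*AB^*B=AA^*BB^*=0$, and that the one-sided version $A^*AB^*B=0$ alone does not force disjoint supports, is also on target (the same pair $A,B$ above witnesses this) and is blurred in the paper's summary of the preliminaries.
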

\begin{proof}
(i)$\Rightarrow$(ii) By the hypothesis $\lambda A$ and $\mu B$ have disjoint supports and $\lambda A + \mu B\in\mathbb{B}_p(\mathcal{H})$, then it follows from
\cite[Theorem 1.7]{maher2} that
$${\|\lambda A +\mu B\|}_p^p = |\lambda|^p{\|A\|}_p^p +|\mu|^p {\|B\|}_p^p.$$
Analogously, ${\|\lambda A - \mu B\|}_p^p =|\lambda|^p{\|A\|}_p^p +|\mu|^p {\|B\|}_p^p$.

(ii)$\Rightarrow$(iii) It is trivial.

(iii)$\Rightarrow$(i) We have
$${\|A + B\|}_p^p + {\|A - B\|}_p^p = 2\left({\|A\|}_p^p + {\|B\|}_p^p\right)$$
or equivalently, by utilizing \cite[Theorem 2.7]{mccarthy}, $A^*AB^*B = 0$, which ensures that $A$ and $B$ have disjoint supports.
\end{proof}
According to \eqref{defiI2}, we say that $A,B\in \mathbb{B}_p(\mathcal{H})$
are isosceles orthogonal, denoted by $A\perp_{I}^p B$ if and only if
$${\|A + B\|}_p = {\|A - B\|}_p\  {\rm and}\ {\|A + i B\|}_p = {\|A - i B\|}_p. $$
The following lemma is a well-known result from McCarthy \cite{mccarthy} for positive operators. We will use it to prove Proposition \ref{paralelogramo isosceles en bp}.
\begin{lemma}[McCarthy inequality] \label{mccarthylema}
If $A, B\in \mathbb{B}_p(\mathcal{H})^+$ for any $p\geq 1$, then
$$2^{1-p}{\|A + B\|}_p^p\leq {\|A\|}_p^p + {\|B\|}_p^p\leq {\|A + B\|}_p^p.$$
\end{lemma}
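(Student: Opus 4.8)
The plan is to prove the two inequalities separately, since they rest on rather different facts. The left-hand bound $2^{1-p}{\|A+B\|}_p^p\le {\|A\|}_p^p + {\|B\|}_p^p$ is a soft consequence of the triangle inequality together with convexity, and in fact holds for arbitrary (not necessarily positive) operators; the right-hand bound ${\|A\|}_p^p + {\|B\|}_p^p\le {\|A+B\|}_p^p$ is the superadditivity of the functional $X\mapsto {\rm tr}(X^p)$ on positive operators, and this is precisely where positivity is indispensable.

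For the left inequality I would start from the triangle inequality ${\|A+B\|}_p\le {\|A\|}_p + {\|B\|}_p$, which is valid because ${\|\cdot\|}_p$ is a norm for $p\ge 1$. Since $t\mapsto t^p$ is convex on $[0,\infty)$ for $p\ge 1$, one has the elementary scalar estimate $(s+t)^p\le 2^{p-1}(s^p+t^p)$ for all $s,t\ge 0$. Applying this with $s={\|A\|}_p$ and $t={\|B\|}_p$ yields
\[
{\|A+B\|}_p^p\le \bigl({\|A\|}_p+{\|B\|}_p\bigr)^p\le 2^{p-1}\bigl({\|A\|}_p^p+{\|B\|}_p^p\bigr),
\]
and dividing by $2^{p-1}$ gives exactly the desired bound.

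For the right inequality I would set $C=A+B\ge 0$ and, after the routine reduction to the case where $C$ is invertible on $\overline{R(C)}$ (which contains the ranges of $A$ and $B$, so that one may add $\varepsilon I$ and let $\varepsilon\to 0$), write $X=C^{-1/2}AC^{-1/2}$. Then $0\le X\le I$ with $A=C^{1/2}XC^{1/2}$ and $B=C^{1/2}(I-X)C^{1/2}$. The key step is the Araki--Lieb--Thirring inequality: for $p\ge 1$ one has ${\rm tr}\bigl[(C^{1/2}XC^{1/2})^p\bigr]\le {\rm tr}\bigl[C^{p/2}X^pC^{p/2}\bigr]={\rm tr}(X^pC^p)$. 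Because $0\le X\le I$ and $p\ge 1$ force $X^p\le X$, while $C^p\ge 0$, the trace of the product of the positive operators $X-X^p$ and $C^p$ is nonnegative, so ${\rm tr}(X^pC^p)\le {\rm tr}(XC^p)$ and hence ${\|A\|}_p^p\le {\rm tr}(XC^p)$. The same computation applied to $I-X$ gives ${\|B\|}_p^p\le {\rm tr}\bigl((I-X)C^p\bigr)$. Adding these and using ${\rm tr}(XC^p)+{\rm tr}\bigl((I-X)C^p\bigr)={\rm tr}(C^p)={\|A+B\|}_p^p$ completes the argument.

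The hard part is the right inequality when $p>2$. For $1\le p\le 2$ one could instead exploit the operator monotonicity of $t\mapsto t^{p-1}$ to compare $(A+B)^{p-1}$ with $A^{p-1}$ and thereby bound the cross terms ${\rm tr}\bigl((A+B)^{p-1}A\bigr)$ from below directly; but this route collapses for $p>2$, where $t^{p-1}$ fails to be operator monotone and $A,B$ need not commute. The Araki--Lieb--Thirring inequality is exactly what bridges this gap uniformly in $p\ge 1$. Alternatively, one may simply invoke the standard superadditivity ${\rm tr}\,f(A+B)\ge {\rm tr}\,f(A)+{\rm tr}\,f(B)$ for convex $f$ with $f(0)=0$, applied to $f(t)=t^p$, which can be cited from \cite{mccarthy}.
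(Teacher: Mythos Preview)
The paper does not actually prove this lemma: it is stated as ``a well-known result from McCarthy \cite{mccarthy}'' and is simply quoted without argument. So there is no proof in the paper to compare against; your proposal goes well beyond what the paper does.

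Your argument is correct. The left inequality is the standard triangle-plus-convexity step and needs no positivity. For the right inequality, your Araki--Lieb--Thirring route is sound: with $C=A+B$ and $X=C^{-1/2}AC^{-1/2}$ one has $0\le X\le I$, hence $X^p\le X$ for $p\ge 1$, and then ${\rm tr}\bigl[(C^{1/2}XC^{1/2})^p\bigr]\le {\rm tr}(X^pC^p)\le {\rm tr}(XC^p)$, with the analogous bound for $I-X$ giving the conclusion after summing. One small caution on the regularisation: in infinite dimensions adding $\varepsilon I$ throws you out of $\mathbb{B}_p(\mathcal{H})$, so the cleaner way to phrase it is exactly what you indicate---restrict everything to $\overline{R(C)}$, where $A,B$ are supported because $0\le A,B\le C$, and approximate spectrally (e.g.\ by the finite-rank truncations of the compact positive operator $C$) to make $C^{-1/2}$ honest.

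Your closing remark is worth highlighting: the trace-superadditivity ${\rm tr}\,f(A+B)\ge {\rm tr}\,f(A)+{\rm tr}\,f(B)$ for convex $f$ with $f(0)=0$ is precisely McCarthy's own argument in \cite{mccarthy}, so citing that lemma directly---as the paper does---is the minimal-effort alternative. Your ALT proof is a genuinely different derivation that avoids the Peierls--Bogoliubov / convexity-of-trace-functional machinery at the cost of importing Araki's inequality; either is fine, but since the paper merely quotes the result, either route is strictly more than required here.
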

Observe that in the cone of positive operators $\mathbb{B}_p(\mathcal{H})^+$ we consider the isosceles orthogonality notion as in \eqref{defiI}.
\begin{proposition} \label{paralelogramo isosceles en bp}
    If $A, B\in\mathbb{B}_p(\mathcal{H})^+$, $1\leq p<2$ and $A\perp_{I}^p B$, then
    $${\|A + B\|}_p^p = {\|A - B\|}_p^p = {\|A\|}_p^p + {\|B\|}_p^p.$$
\end{proposition}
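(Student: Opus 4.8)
The plan is to pin down the common value $\|A+B\|_p^p = \|A-B\|_p^p$ by squeezing it between the two inequalities already recorded in the excerpt, thereby forcing it to equal $\|A\|_p^p + \|B\|_p^p$. First I would unwind the hypothesis: since $A,B$ are positive, the relation $A\perp_{I}^{p} B$ is the real isosceles orthogonality \eqref{defiI}, i.e. $\|A+B\|_p = \|A-B\|_p$, so that $\|A+B\|_p^p = \|A-B\|_p^p$; call this common value $t$.

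For the first bound I would apply the Clarkson--McCarthy inequality \eqref{Clarkson1}, which is valid for $1\leq p\leq 2$ and hence over the whole range $1\leq p<2$ under consideration. Since $A\pm B$ lie in $\mathbb{B}_p(\mathcal{H})$, this gives
$$\|A+B\|_p^p + \|A-B\|_p^p \leq 2\big(\|A\|_p^p + \|B\|_p^p\big),$$
and substituting the two equal left-hand terms yields $2t \leq 2\big(\|A\|_p^p + \|B\|_p^p\big)$, that is, $t \leq \|A\|_p^p + \|B\|_p^p$. For the reverse bound I would invoke the upper estimate of Lemma \ref{mccarthylema}, whose positivity hypothesis is exactly what we are given, to obtain $\|A\|_p^p + \|B\|_p^p \leq \|A+B\|_p^p = t$. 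The two estimates sandwich $t$, forcing $t = \|A\|_p^p + \|B\|_p^p$, and since $\|A-B\|_p^p = t$ as well, all three quantities coincide, which is the claim.

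The proof is essentially a two-line squeeze, so there is no serious analytic obstacle; the only point to watch is orienting the inequalities correctly--the Clarkson inequality \eqref{Clarkson1} bounds the \emph{sum} $\|A+B\|_p^p+\|A-B\|_p^p$ from above, whereas the positive-operator McCarthy bound in Lemma \ref{mccarthylema} bounds $\|A+B\|_p^p$ from below--and confirming that the positivity of $A$ and $B$ required for Lemma \ref{mccarthylema} is genuinely available, which it is by assumption. I would also note that the argument never uses $p<2$ in an essential way beyond guaranteeing \eqref{Clarkson1}; the restriction simply excludes the boundary case $p=2$, where the parallelogram law makes the conclusion immediate.
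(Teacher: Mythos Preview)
Your proof is correct and follows exactly the same route as the paper: use the isosceles hypothesis to equate $\|A+B\|_p^p$ and $\|A-B\|_p^p$, bound this common value above via the Clarkson--McCarthy inequality \eqref{Clarkson1}, and below via the positive-operator bound of Lemma \ref{mccarthylema}, forcing equality. Your remark that $A\perp_I^p B$ is interpreted in the real sense \eqref{defiI} on the positive cone matches the paper's convention stated just before the proposition.
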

\begin{proof}
    By Clarkson--McCarthy inequality \eqref{Clarkson1} and isosceles orthogonality we have
    $$2{\|A + B\|}_p^p = {\|A + B\|}_p^p + {\|A - B\|}_p^p\leq 2\big({\|A\|}_p^p + {\|B\|}_p^p\big)$$
    and
    $${\|A + B\|}_p^p\leq {\|A\|}_p^p + {\|B\|}_p^p.$$
    Now using the previous lemma we obtain that
    $${\|A + B\|}_p^p = {\|A\|}_p^p + {\|B\|}_p^p.$$
\end{proof}

There exists a comparison between isosceles and Birkhoff--James orthogonalities. James in \cite{J} proved that if $\|x - y\| = \|x + y\|$ for any elements $x, y$ in a normed space $\mathcal{X}$, then $\|x\|\leq \|x + \gamma y\|$ for all $|\gamma|\geq 1$. The next statement shows that in the cone of positive operators, both notions of orthogonality coincide.
\begin{theorem}
If $A, B\in\mathbb{B}_p(\mathcal{H})^+$, with $1<p\leq 2$, then the following statements are equivalent:
\begin{itemize}
\item[(i)] $A\perp_{I}^p B$.
\item[(ii)] $A\perp_{BJ}^p B$.
\end{itemize}
Also for $p=1$, the following statements are equivalent:
\begin{itemize}
\item[(iii)] $A\perp_{I}^1 B$.
\item[(iv)] $AB = BA = 0$.
\end{itemize}
\end{theorem}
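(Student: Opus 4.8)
The plan is to route both equivalences through the notion of disjoint supports, which is the common bridge furnished by the earlier results; concretely, for $1<p\le 2$ I would show that each of (i) and (ii) is equivalent to the statement that $A$ and $B$ have disjoint supports, whence (i)$\Leftrightarrow$(ii).

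For (ii)$\Rightarrow$(i), assuming $A\perp_{BJ}^p B$, Theorem \ref{bj implica soporte} (valid for all $1<p<\infty$ and positive operators) immediately gives that $A$ and $B$ have disjoint supports. Then the implication (i)$\Rightarrow$(ii) of Theorem \ref{equivalencia soporte normas} yields $\|\lambda A+\mu B\|_p^p=\|\lambda A-\mu B\|_p^p=|\lambda|^p\|A\|_p^p+|\mu|^p\|B\|_p^p$ for all $\lambda,\mu\in\mathbb{C}$; taking $\lambda=\mu=1$ produces $\|A+B\|_p=\|A-B\|_p$, i.e. $A\perp_I^p B$ in the sense adopted for positive operators. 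This direction needs no special treatment at the endpoint.

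For (i)$\Rightarrow$(ii) when $1\le p<2$, Proposition \ref{paralelogramo isosceles en bp} converts the isosceles hypothesis into $\|A+B\|_p^p=\|A-B\|_p^p=\|A\|_p^p+\|B\|_p^p$, which is exactly condition (iii) of Theorem \ref{equivalencia soporte normas}; its implication (iii)$\Rightarrow$(i) then supplies disjoint supports, and Proposition \ref{soporte implica bj} turns this into $A\perp_{BJ}^p B$. The genuine obstacle is the endpoint $p=2$, where Proposition \ref{paralelogramo isosceles en bp} is no longer available and the equality case of the McCarthy inequality used inside Theorem \ref{equivalencia soporte normas} degenerates. I would dispatch $p=2$ by hand using the Hilbert space structure of $\mathbb{B}_2(\mathcal{H})$: for positive $A,B$ the quantity $\langle A,B\rangle_{HS}={\rm tr}(BA)={\rm tr}(A^{1/2}BA^{1/2})$ is real and nonnegative, so $\|A+B\|_2=\|A-B\|_2$ forces ${\rm tr}(A^{1/2}BA^{1/2})=\|A^{1/2}B^{1/2}\|_2^2=0$; positivity then gives $A^{1/2}B^{1/2}=0$, hence $R(A)\perp R(B)$ and disjoint supports, and Proposition \ref{soporte implica bj} again delivers $A\perp_{BJ}^2 B$.

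Finally, the case $p=1$ is immediate: the equivalence (iii)$\Leftrightarrow$(iv) is precisely the assertion \eqref{msm1} of Busch recalled before the statement, so no further work is required. The only delicate point in the whole proof is reconciling the endpoint $p=2$ with the tools phrased for $p\ne2$, and it is exactly the positivity of $A$ and $B$ that makes the isosceles condition collapse to ${\rm tr}(BA)=0$ and rescue this case.
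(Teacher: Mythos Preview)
Your argument follows exactly the route the paper intends: the paper's proof is the single sentence ``Combining Propositions~\ref{soporte implica bj}, \ref{paralelogramo isosceles en bp}, Theorems~\ref{bj implica soporte} and~\ref{equivalencia soporte normas} we obtain the equivalence desired,'' together with the appeal to~\eqref{msm1} for $p=1$. Your chain (i)$\Rightarrow$disjoint supports$\Rightarrow$(ii) via Proposition~\ref{paralelogramo isosceles en bp}, Theorem~\ref{equivalencia soporte normas} and Proposition~\ref{soporte implica bj}, and (ii)$\Rightarrow$disjoint supports$\Rightarrow$(i) via Theorem~\ref{bj implica soporte} and Theorem~\ref{equivalencia soporte normas}, is precisely this combination unpacked.

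Where you go beyond the paper is the endpoint $p=2$, and you are right to do so. Proposition~\ref{paralelogramo isosceles en bp} is stated only for $1\le p<2$, and more importantly the implication (iii)$\Rightarrow$(i) of Theorem~\ref{equivalencia soporte normas} rests on the equality case of Clarkson--McCarthy, which the paper itself notes is vacuous at $p=2$ (equality always holds there, so it does not force $A^*AB^*B=0$). Your direct argument---from $\|A+B\|_2=\|A-B\|_2$ to ${\rm tr}(BA)=0$, then using positivity to write ${\rm tr}(BA)=\|B^{1/2}A^{1/2}\|_2^2$ and conclude disjoint supports---is the natural and correct patch. The paper's terse proof glosses over this endpoint; your version is the more careful one.
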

\begin{proof}
Combining Propositions \ref{soporte implica bj}, \ref{paralelogramo isosceles en bp}, Theorems \ref{bj implica soporte} and \ref{equivalencia soporte normas} we obtain the equivalence desired. On the other hand, the result for $p=1$ is true due to \eqref{msm1}.

We remark that for $p=1$ the equivalence between Birkhoff--James and isosceles orthogonality does not hold. For instance, let $B$ be the same matrix as in Remark \ref{r.e.m}. Then $B\perp_{BJ}^{1} I$ and $\|B+I\|_1\neq \|B-I\|_1$.

\end{proof}
\subsection{$\mathbb{B}_p(\mathcal{H})$ ideals as semi-inner product spaces}
$\quad$

Recall that in any normed space $(\mathcal{X}, \|\cdot\|)$ one can construct (as
noticed by Lumer \cite{lumer} and Giles \cite{giles}) a semi-inner product, i.e., a mapping $[\cdot, \cdot]:\mathcal{X}\times \mathcal{X}\rightarrow\mathbb{K}$ such that
\begin{itemize}
\item[(1)] $[x, x] = \|x\|^2$,
\item[(2)] $[\alpha x + \beta y, z] = \alpha[x, z] + \beta[y, z]$,
\item[(3)] $[x, \gamma y] = \overline{\gamma}[x, y]$,
\item[(4)] $|[x, y]|^2\leq \|x\|^2\|y\|^2$,
\end{itemize}
for all $x, y, z\in \mathcal{X}$ and all $\alpha, \beta, \gamma \in\mathbb{K}$. There may be more than
one semi-inner product on a space. It is well known that in a normed space there exists exactly one semi-inner product if and only if the space is smooth.
If $\mathcal{X}$ is an inner product space, the only semi inner product on $\mathcal{X}$ is the inner product itself.
More details can be found in \cite{lumer, giles}.

Using Lumer's ideas, we endowed the $\mathbb{B}_p(\mathcal{H})$ ideals with semi-inner products as follows.\\
Let $1\leq p<\infty$, we define for any $A, B\in \mathbb{B}_p(\mathcal{H})$
$$[B, A] = {\|A\|}_p^{2-p}{\rm tr}\left(|A|^{p-1}U^*B \right),$$
where $U|A|$ is the polar decomposition of $A$. Then, $[\cdot, \cdot]$ satisfies $(1)$ to $(4)$.
In fact, items $(1)$ to $(3)$ are easily to check. To prove Property $(4)$, we observe that
\begin{align*}
\left|{\rm tr}(|A|^{p-1}U^*B)\right|&\leq {\rm tr}\left||A|^{p-1}U^*B\right| = {\big\||A|^{p-1}U^*B\big\|}_1
\\&\leq {\|B\|}_p{\big\||A|^{p-1}U^*\big\|}_q\leq {\|B\|}_p{\big\||A|^{p-1}\big\|}_q,
\end{align*}
where $\frac{1}{p}+\frac{1}{q} = 1$ and also $${\big\||A|^{p-1}\big\|}_q^q = {\rm tr}\left(|A|^{p-1}\right)^q = {\rm tr}|A|^p = \|A\|_p^p,$$
whence
$${\big\||A|^{p-1}\big\|}_q = {\|A\|}_p^{p/q} = {\|A\|}_p^{p-1}.$$
Thus,
\begin{align*}
\left|[B,A]\right|^2 = \left|{\|A\|}_p^{2-p}\left[{\rm tr}\left(|A|^{p-1}U^*B \right) \right]\right|^2
\leq {\|A\|}_p^{2(2-p)}{\|B\|}_p^2{\|A\|}_p^{2(p-1)} = {\|B\|}_p^2\|A\|_p^2.
\end{align*}
Therefore, $\big(\mathbb{B}_p(\mathcal{H}), [\cdot,\cdot]\big)$ is a semi-inner product space in the sense of Lumer. Moreover,
the continuous property for semi-inner product spaces holds for almost all of these operator ideals, as we state in the following result.
\begin{theorem} \label{teo ortog y trans}
Let $1<p<\infty$. Then, $\big(\mathbb{B}_p(\mathcal{H}), [\cdot,\cdot]\big)$ is a continuous semi-inner product space and for $A, B\in \mathbb{B}_p(\mathcal{H})$
the following statements are equivalent:
\begin{itemize}
\item[(i)] $A\perp_{BJ}^p B$.
\item[(ii)] $[B, A]= 0$.
\end{itemize}
\end{theorem}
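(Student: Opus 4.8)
The plan is to establish the two claimed properties of the semi-inner product space $\big(\mathbb{B}_p(\mathcal{H}), [\cdot,\cdot]\big)$ separately: first the equivalence of Birkhoff--James orthogonality with vanishing of the semi-inner product, and then the continuity of the semi-inner product. The natural route to the equivalence is through the general theory of semi-inner product spaces due to Giles \cite{giles}, which tells us that in a smooth normed space, where the semi-inner product is unique, one has $x \perp_{BJ} y$ if and only if $[y,x]=0$. Thus the real content is to verify that $\mathbb{B}_p(\mathcal{H})$ is smooth for $1<p<\infty$, and that the formula $[B,A] = \|A\|_p^{2-p}\,\mathrm{tr}\big(|A|^{p-1}U^*B\big)$ is precisely the (unique) semi-inner product compatible with the norm.

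First I would recall that for $1<p<\infty$ the Clarkson--McCarthy inequalities \eqref{Clarkson1}--\eqref{Clarkson2} yield uniform convexity of $\mathbb{B}_p(\mathcal{H})$, and by the usual duality the dual space is $\mathbb{B}_q(\mathcal{H})$ with $\tfrac1p+\tfrac1q=1$, which is strictly convex; hence $\mathbb{B}_p(\mathcal{H})$ is smooth. Smoothness guarantees both that there is a \emph{unique} semi-inner product and that the supporting functional at each point is unique. Since the excerpt has already verified that the displayed formula satisfies axioms $(1)$--$(4)$ of a semi-inner product, uniqueness forces it to be \emph{the} semi-inner product. The equivalence (i)$\Leftrightarrow$(ii) then follows immediately from the standard fact that $A\perp_{BJ}^p B \Leftrightarrow [B,A]=0$ in a smooth space.

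For the continuity assertion, recall that a semi-inner product space is called \emph{continuous} when $\lim_{t\to 0}\,\mathrm{Re}\,[y, x+ty] = \mathrm{Re}\,[y,x]$ for all $x,y$ on the unit sphere; by a theorem of Giles this property is equivalent to the norm being (Gateaux) smooth, which we have just established. Alternatively, one can argue directly that the map $A \mapsto \|A\|_p^{2-p}\,|A|^{p-1}U^*$ sending each operator to its supporting functional is norm-to-norm continuous, using the continuity of the functional calculus $t\mapsto t^{p-1}$ on the singular values together with continuity of the polar decomposition on the appropriate domain. I would present the cleaner route via Giles' characterization, invoking the uniform convexity of $\mathbb{B}_p(\mathcal{H})$ recorded after \eqref{Clarkson2}.

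The main obstacle I anticipate is the continuity of the polar-decomposition factor $U^*$ and of the map $A\mapsto |A|^{p-1}$ as $A$ varies, since polar decomposition is not continuous at operators with nontrivial kernel and the power $t^{p-1}$ is only Hölder continuous near $0$ when $p<2$. This is exactly why routing the continuity claim through smoothness (Giles' theorem) rather than through a direct estimate on the explicit formula is advantageous: smoothness transfers the burden to the convexity geometry of the norm, which the Clarkson--McCarthy inequalities already hand us, and sidesteps the delicate analysis of the supporting-functional map altogether.
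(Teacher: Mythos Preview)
Your proposal is correct and follows essentially the same route as the paper: both arguments reduce everything to Giles' theorems (\cite[Theorems~2 and~3]{giles}) once smoothness/G\^{a}teaux differentiability of the $p$-Schatten norm is known. The only minor difference is that the paper simply asserts G\^{a}teaux differentiability of ${\|\cdot\|}_p$ for $1<p<\infty$ as a known fact, whereas you obtain smoothness through the duality $(\mathbb{B}_p)^*=\mathbb{B}_q$ together with strict convexity of $\mathbb{B}_q$; both are valid justifications of the same ingredient.
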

\begin{proof}
It is a direct consequence of \cite[Theorem 3]{giles}, since any $p$-norm in $\mathbb{B}_p(\mathcal{H})$ is G\^{a}teaux differentiable for $1<p<\infty$. The characterization of Birkhoff--James orthogonality follows directly from \cite[Theorem 2]{giles}.
\end{proof}
\begin{remark}
Theorem \ref{teo ortog y trans} was been proved for matrices in \cite[Theorem 2.1]{B.S}. In this context, the converse still holds for $p=1$ and $A$ invertible.
\end{remark}
The following result was obtained by Kittaneh in \cite{kittaneh_laa_1991}.
\begin{corollary}\label{cr.0248}
Let $A$ be in the algebra $\mathbb{M}_n(\mathbb{C})$ of all complex $n\times n$ matrices and $p\in[1,\infty)$. Then the following statements are equivalent:
\begin{itemize}
\item[(i)] $I\perp_{BJ}^p A$.
\item[(ii)] ${\rm tr}(A) = 0$.
\end{itemize}
\end{corollary}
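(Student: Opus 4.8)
The plan is to recognize the statement as the special case of the semi-inner product characterization of Theorem \ref{teo ortog y trans} obtained by matching the theorem's second operator, the one whose polar decomposition enters $[\cdot,\cdot]$, with the identity $I$. Writing the polar decomposition of the identity as $I = I\cdot I$ (so $U=I$ and $|I|=I$), the semi-inner product reduces to $[A,I] = \|I\|_p^{2-p}\,{\rm tr}\!\big(|I|^{p-1}I^*A\big) = \|I\|_p^{2-p}\,{\rm tr}(A)$, and since $\|I\|_p = n^{1/p} > 0$ its vanishing is exactly ${\rm tr}(A)=0$.

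First I would dispose of the range $1<p<\infty$. Here Theorem \ref{teo ortog y trans} applies verbatim to the pair $(I,A)$, giving $I\perp_{BJ}^p A \Leftrightarrow [A,I]=0$; combined with the computation above, this is precisely (i)$\Leftrightarrow$(ii), and no further work is needed in this range.

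The remaining and genuinely delicate case is $p=1$, where Theorem \ref{teo ortog y trans} is unavailable because the trace norm is not G\^ateaux differentiable in general and $\big(\mathbb{B}_1(\mathcal{H}),[\cdot,\cdot]\big)$ is not a continuous semi-inner product space. The implication (ii)$\Rightarrow$(i) is still easy and holds for every $p$, being the always-valid direction of the semi-inner product theory ($[A,I]=0$ forces $I\perp_{BJ} A$ via the Cauchy--Schwarz property (4)); concretely, if ${\rm tr}(A)=0$ then for every $\gamma\in\mathbb{C}$ one has $\|I+\gamma A\|_1 \geq |{\rm tr}(I+\gamma A)| = |n + \gamma\,{\rm tr}(A)| = n = \|I\|_1$, so $\gamma=0$ minimizes. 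For the converse (i)$\Rightarrow$(ii) at $p=1$ I would exploit that the operator in the second slot, namely $I$, is invertible: this is the situation covered by the remark following Theorem \ref{teo ortog y trans}. At an invertible base point the map $\gamma\mapsto \|I+\gamma A\|_1 = {\rm tr}\,|I+\gamma A|$ is differentiable near $\gamma=0$ (since $(I+\gamma A)^*(I+\gamma A)$ stays bounded away from $0$, its square root depends smoothly on $\gamma$), and as $I\perp_{BJ}^1 A$ makes $\gamma=0$ a global minimum, all directional derivatives must vanish; computing the derivatives along the real and imaginary directions yields ${\rm Re}\,{\rm tr}(A)=0$ and ${\rm Im}\,{\rm tr}(A)=0$, hence ${\rm tr}(A)=0$.

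The main obstacle is exactly this $p=1$ converse: one cannot invoke the smoothness-based characterization of Giles, so the argument must be rebuilt from the invertibility of the base point. The key technical point to nail down is the differentiability of $\gamma\mapsto {\rm tr}\,|I+\gamma A|$ at $\gamma=0$ together with the value of its derivative, namely ${\rm Re}\,{\rm tr}(U^*A)$ with $U=I$; everything else is either a direct substitution into Theorem \ref{teo ortog y trans} or the elementary estimate $\|X\|_1\geq|{\rm tr}(X)|$.
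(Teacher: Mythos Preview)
Your proposal is correct and follows essentially the same route as the paper: for $1<p<\infty$ you invoke Theorem~\ref{teo ortog y trans} exactly as the paper does, and for $p=1$ the paper's one-line observation that ${\|\cdot\|}_1$ is G\^ateaux differentiable at the identity is precisely the invertibility-based differentiability argument you spell out in detail. Your additional direct bound $\|I+\gamma A\|_1\geq |{\rm tr}(I+\gamma A)|$ for (ii)$\Rightarrow$(i) is a nice explicit touch the paper omits, but it does not change the overall strategy.
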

\begin{proof}
Theorem \ref{teo ortog y trans} ensures that
$$I\perp_{BJ}^p A\Leftrightarrow [A, I] = 0 =\|I\|_p^{2-p}{\rm tr}\left( |I|^{p-1}U^*A\right)\Leftrightarrow {\rm tr}(A)=0,$$
since $|I|^{p-1}U^*=I$.

On the other hand, we observe that ${\|\cdot\|}_1$ is G\^{a}teaux differentiable at the identity.
\end{proof}
\begin{theorem}\label{th.0004}
Let $A\in \mathbb{B}(\mathcal{H})$. Then, the following conditions are equivalent:
\begin{itemize}
\item[(i)] $A = 0$.
\item[(ii)] $|I + \gamma A| \geq I$ for all $\gamma\in\mathbb{C}$.
\item[(iii)] $|I + \gamma A| = |I - \gamma A|$ for all $\gamma\in\mathbb{C}$.
\end{itemize}
In the case when $\mathcal{H}$ is finite dimensional, for every $p\in[1,\infty)$, each one of these assertions implies that $I\perp_{BJ}^p A$.
\end{theorem}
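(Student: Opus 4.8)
The plan is to prove the two trivial implications $\text{(i)}\Rightarrow\text{(ii)}$ and $\text{(i)}\Rightarrow\text{(iii)}$ first, and then the two substantive converses $\text{(iii)}\Rightarrow\text{(i)}$ and $\text{(ii)}\Rightarrow\text{(i)}$. Everything rests on the elementary identity
$$|I + \gamma A|^2 = (I + \overline{\gamma} A^*)(I + \gamma A) = I + \gamma A + \overline{\gamma} A^* + |\gamma|^2 A^*A,$$
valid for every $\gamma \in \mathbb{C}$. If $A = 0$, then $|I + \gamma A| = I$ for all $\gamma$, so both (ii) and (iii) hold immediately; this settles the easy directions.

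For $\text{(iii)}\Rightarrow\text{(i)}$ I would square the equality $|I + \gamma A| = |I - \gamma A|$ (equal positive operators have equal squares) and subtract the two expansions above. The terms $I$ and $|\gamma|^2 A^*A$ cancel, leaving $\gamma A + \overline{\gamma} A^* = 0$ for every $\gamma \in \mathbb{C}$. Taking $\gamma = 1$ gives $A + A^* = 0$ and taking $\gamma = i$ gives $A - A^* = 0$, whence $A = 0$.

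The implication $\text{(ii)}\Rightarrow\text{(i)}$ is the delicate one, and I expect it to be the main obstacle, since (ii) is stated for the square roots $|I + \gamma A|$ rather than for their squares. First I would upgrade the hypothesis: if $T := |I + \gamma A| \geq I$, then $T - I$ and $T + I$ are \emph{commuting} positive operators, so their product $T^2 - I = (T-I)(T+I)$ is positive; that is, $|I + \gamma A|^2 \geq I$. (Equivalently, one invokes the spectral theorem: $\sigma(T)\subseteq[1,\infty)$ forces $\sigma(T^2)\subseteq[1,\infty)$.) By the identity this reads $\gamma A + \overline{\gamma} A^* + |\gamma|^2 A^*A \geq 0$ for all $\gamma$. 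Writing $\gamma = r e^{i\theta}$ with $r>0$ and dividing by $r$, the inequality becomes $e^{i\theta}A + e^{-i\theta}A^* + r\,A^*A \geq 0$; letting $r \to 0^+$ for each fixed $\theta$ (positivity being preserved in the limit of each $\langle\,\cdot\,x,x\rangle$) yields $e^{i\theta}A + e^{-i\theta}A^* \geq 0$ for every $\theta$. Replacing $\theta$ by $\theta + \pi$ reverses the sign, so in fact $e^{i\theta}A + e^{-i\theta}A^* = 0$ for all $\theta$, and the choices $\theta = 0$ and $\theta = \pi/2$ give $A + A^* = 0$ and $A - A^* = 0$, whence $A = 0$.

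For the finite-dimensional addendum I would argue directly from (ii), which is the more informative route (although, the three conditions being equivalent, each forces $A = 0$ and hence trivially $I \perp_{BJ}^p A$). In finite dimensions $I \in \mathbb{B}_p(\mathcal{H})$ for every $p \in [1,\infty)$, and the Schatten norm depends only on singular values, so $\|I + \gamma A\|_p = \big\||I + \gamma A|\big\|_p$. Since $0 \leq I \leq |I + \gamma A|$, Weyl's monotonicity principle gives $s_j(I) \leq s_j(|I + \gamma A|) = s_j(I + \gamma A)$ for every $j$, and therefore $\|I\|_p \leq \|I + \gamma A\|_p$ for all $\gamma \in \mathbb{C}$, which is precisely $I \perp_{BJ}^p A$.
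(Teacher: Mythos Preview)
Your proof is correct and follows essentially the same route as the paper: both arguments pass from $|I+\gamma A|\geq I$ to $|I+\gamma A|^2\geq I$, expand, and let $\gamma\to 0$ along suitable directions to force $A+A^*=0$ and $A-A^*=0$; the (iii)$\Rightarrow$(i) step is identical. Your version is in fact slightly more careful, since you justify the squaring step (via commuting positives or the spectrum) whereas the paper asserts it, and you treat the finite-dimensional addendum explicitly via Weyl monotonicity rather than leaving it implicit.
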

\begin{proof}
(i) $\Rightarrow$ (ii) is trivial.

(ii) $\Rightarrow$ (iii) Suppose (ii) holds. Then $|I + \gamma A|^2 \geq I$ for all $\gamma\in\mathbb{C}$. Hence
$$\gamma A + \overline{\gamma}A^* + |\gamma|^2|A|^2 \geq 0 \qquad (\gamma\in\mathbb{C}).$$
For $\gamma = \frac{1}{m}, -\frac{1}{m}, \frac{i}{m}, -\frac{i}{m}$, the above inequality becomes
\begin{align} \label{002}
A + A^* + \frac{1}{m}|A|^2 \geq 0, \qquad \qquad A + A^* -\frac{1}{m}|A|^2\leq 0
\end{align}
and
\begin{align} \label{003}
iA -i A^* + \frac{1}{m}|A|^2 \geq 0, \qquad \qquad iA -i A^* -\frac{1}{m}|A|^2\leq 0.
\end{align}
Letting $m\rightarrow \infty$ in (\ref{002}) and (\ref{003}) we get
$$A + A^* = 0 \qquad \mbox{and} \qquad iA -i A^* =0$$
which imply that $A = 0$. Thus $|I + \gamma A| = I = |I - \gamma A|$ for all $\gamma\in\mathbb{C}$.

(iii) $\Rightarrow$ (i) Let $|I + \gamma A| = |I - \gamma A|$ for all $\gamma\in\mathbb{C}$.
Then $\gamma A + \overline{\gamma}A^* = 0$ for all $\gamma\in\mathbb{C}$. For $\gamma = 1, i$, we conclude that
$$A + A^* = 0 \qquad \mbox{and} \qquad iA -i A^* =0$$
which yield that $A = 0$.
\end{proof}

\begin{theorem}\label{th.0005}
Let $A, B\in \mathbb{M}_n(\mathbb{C})$ and $p\in[1,\infty)$. Let $|B + \gamma A| \geq |B|$ for all $\gamma\in\mathbb{C}$.
Then, the following statements hold.
\begin{itemize}
\item[(i)] ${\rm tr}(B^*A) = 0$.
\item[(ii)] $\ker(B + \gamma A) = \ker(B) \cap \ker(A)$ for all $\gamma\in\mathbb{C}\setminus \{0\}$.
\item[(iii)] Either $A$ or $B$ is noninvertible.
\item[(iv)]$B\perp_{BJ}^p A$.
\end{itemize}
\end{theorem}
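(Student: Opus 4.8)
The plan is to extract all four conclusions from the single operator inequality $|B+\gamma A|\ge|B|$ (for every $\gamma\in\mathbb{C}$), the key being that in $\mathbb{M}_n(\mathbb{C})$ the L\"owner order between the absolute values controls the singular values. I would first prove (iv). Since $|B+\gamma A|$ and $|B|$ are positive operators with $|B+\gamma A|\ge|B|\ge 0$, Weyl's monotonicity principle gives $\lambda_j(|B+\gamma A|)\ge\lambda_j(|B|)$ for every $j$, that is, $s_j(B+\gamma A)\ge s_j(B)$. Summing $p$-th powers yields ${\|B+\gamma A\|}_p\ge{\|B\|}_p$ for all $\gamma\in\mathbb{C}$ and all $p\in[1,\infty)$, which is precisely $B\perp_{BJ}^p A$.

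For (i) I would specialize (iv) to $p=2$. Then $B\perp_{BJ}^2 A$, and because $\big(\mathbb{B}_2(\mathcal{H}),\langle\cdot,\cdot\rangle_{HS}\big)$ is a genuine inner product space, Birkhoff--James orthogonality there coincides with orthogonality in the inner product. Hence $\langle B,A\rangle_{HS}={\rm tr}(A^*B)=0$, and taking conjugates, ${\rm tr}(B^*A)=0$.

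To establish (ii), I would use the elementary identity $\ker(X)=\ker(|X|)$, which holds because $\|Xx\|^2=\langle|X|^2x,x\rangle=\big\||X|^{1/2}x\big\|^2$. The inclusion $\ker(B)\cap\ker(A)\subseteq\ker(B+\gamma A)$ is immediate. For the reverse inclusion, fix $\gamma\neq 0$ and $x$ with $(B+\gamma A)x=0$; then $|B+\gamma A|x=0$, so $0=\langle|B+\gamma A|x,x\rangle\ge\langle|B|x,x\rangle\ge 0$, which forces $|B|x=0$ and hence $Bx=0$. Consequently $\gamma Ax=(B+\gamma A)x-Bx=0$, and since $\gamma\neq 0$ we get $Ax=0$, so $x\in\ker(B)\cap\ker(A)$. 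Finally, (iii) follows from (ii): if both $A$ and $B$ were invertible, then $\ker(B)\cap\ker(A)=\{0\}$ would give $\ker(B+\gamma A)=\{0\}$, i.e.\ $B+\gamma A$ invertible for every $\gamma\neq 0$; but writing $B+\gamma A=A(A^{-1}B+\gamma I)$ and choosing $\gamma=-\mu$ for a (necessarily nonzero) eigenvalue $\mu$ of the invertible matrix $A^{-1}B$ makes $B+\gamma A$ singular, a contradiction.

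I expect the only genuine subtlety to be the first step: correctly transferring the operator inequality $|B+\gamma A|\ge|B|$ to the singular-value domination $s_j(B+\gamma A)\ge s_j(B)$ via Weyl monotonicity, since this is what upgrades a pointwise L\"owner statement into control over every Schatten $p$-norm. Once that monotonicity is in hand, (iv) and (i) come for free, while (ii) and (iii) are purely linear-algebraic and present no difficulty.
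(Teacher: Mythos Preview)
Your proof is correct, and for parts (ii) and (iii) it coincides with the paper's argument essentially line for line. The differences occur in (iv) and (i). For (iv), the paper fixes an orthonormal eigenbasis $\{e_i\}$ of $|B|$, uses the pointwise inequality $\langle|B|e_i,e_i\rangle\le\langle|B+\gamma A|e_i,e_i\rangle$, and then the pinching inequality $\big(\sum_i\langle|B+\gamma A|e_i,e_i\rangle^p\big)^{1/p}\le\||B+\gamma A|\|_p$ to conclude; you instead invoke Weyl's monotonicity to get $s_j(B+\gamma A)\ge s_j(B)$ directly, which is shorter and avoids the auxiliary basis. For (i), the paper argues independently: it expands ${\rm tr}(|B+\gamma A|^2)\ge{\rm tr}(|B|^2)$ for $\gamma=\pm\tfrac{1}{m},\pm\tfrac{i}{m}$ and lets $m\to\infty$ to isolate the real and imaginary parts of ${\rm tr}(B^*A)$; you obtain (i) as a corollary of (iv) at $p=2$, using that Birkhoff--James orthogonality in the Hilbert space $\mathbb{B}_2(\mathcal{H})$ is ordinary orthogonality. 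Your route is more economical because it reuses (iv), while the paper's route keeps (i) logically independent of (iv); both are perfectly valid.
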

\begin{proof}
(i) For $\gamma = \frac{1}{m}$, we have $|B + \frac{1}{m} A| \geq |B|$. Hence $s_i(|B + \frac{1}{m} A|) \geq s_i(|B|) \,(1\leq i\leq n)$.
Therefore, $s_i(|B + \frac{1}{m} A|^2) \geq s_i(|B|^2)$ for all $1\leq i\leq n$. We have
\begin{align*}
{\rm tr}(|B|^2) & = \sum_{i=1}^{n}s_i(|B|^2)
\\& \leq \sum_{i=1}^{n}s_i(|B + \frac{1}{m} A|^2)
\\& = {\rm tr}(|B + \frac{1}{m} A|^2)
\\& = {\rm tr}(|B|^2) + \frac{1}{m}{\rm tr}(B^*A) + \frac{1}{m}{\rm tr}(A^*B) + \frac{1}{m^2}{\rm tr}(|A|^2).
\end{align*}
Hence
\begin{align} \label{00201}
{\rm tr}(B^*A) + {\rm tr}(A^*B) + \frac{1}{m}{\rm tr}(|A|^2) \geq 0.
\end{align}
Similarity, for $\gamma = -\frac{1}{m}, \frac{i}{m}, -\frac{i}{m}$, we get
\begin{align} \label{00202}
{\rm tr}(B^*A) + {\rm tr}(A^*B) - \frac{1}{m}{\rm tr}(|A|^2) \leq 0,
\end{align}
\begin{align}\label{00203}
i{\rm tr}(B^*A) - i{\rm tr}(A^*B) + \frac{1}{m}{\rm tr}(|A|^2) \geq 0
\end{align}
and
\begin{align}\label{00204}
i{\rm tr}(B^*A) -i {\rm tr}(A^*B) - \frac{1}{m}{\rm tr}(|A|^2) \leq 0.
\end{align}
Taking $m\rightarrow \infty$ in (\ref{00201}) and (\ref{00202}) we obtain
\begin{align}\label{00205}
{\rm tr}(B^*A) + {\rm tr}(A^*B) = 0.
\end{align}
Also, letting $m\rightarrow \infty$ in (\ref{00203}) and (\ref{00204}) we get
\begin{align}\label{00206}
i{\rm tr}(B^*A) -i {\rm tr}(A^*B) =0.
\end{align}
Now, by (\ref{00205}) and (\ref{00206}), we conclude that ${\rm tr}(B^*A) = 0$.

(ii) Obviously, $\ker(B) \cap \ker(A) \subseteq \ker(B + \gamma A)$. We prove $\ker(B + \gamma A) \subseteq \ker(B) \cap \ker(A)$. Let $x\in \ker(B + \gamma A)$ for $\gamma\neq 0$. First note that if $Bx=0$, then $\gamma Ax=0$, whence $x\in \ker A$. Next, we observe that
$$x\in \ker\left(B + \gamma A\right)=\ker\left(|B + \gamma A| \right).$$
By virtue of the hypothesis we have
$$0=\left\langle |B + \gamma A|x, x\right\rangle \geq\left\langle |B|x,x\right\rangle \geq 0,$$
that is, $x\in \ker(|B|)=\ker(B)$.
We also see that $\ker(B + \gamma A) = \ker(B) \cap \ker(A)$ is equivalent to the range additivity property ${\rm R}(B^* + \overline{\gamma} A^*) = {\rm R}(B^*)+ {\rm R}(A^*)$.

(iii) Now, let $A$ be invertible. Then $\{0\} = \ker(B) \cap \ker(A) = \ker(B + \gamma A)$
for all $\gamma\in\mathbb{C}\setminus \{0\}$. Hence $B + \gamma A$ is invertible for all $\gamma\in\mathbb{C}\setminus \{0\}$. Furthermore, we have $\frac{1}{\gamma}A^{-1}(B + \gamma A) = I + \frac{1}{\gamma}A^{-1}B$ for all $\gamma\in\mathbb{C}\setminus \{0\}$.
Thus the spectrum of $A^{-1}B$ consists of exactly one point. Hence $A^{-1}B$ is noninvertible and so is $B$.

(iv) Let $p\in[1,\infty)$. For an orthonormal basis $\{e_i\}$ consisting of eigenvectors of $B$ and for all $\gamma\in\mathbb{C}$, we have
\begin{align*}
{\|B\|}_p = {\big\||B|\big\|}_p &= \Big(\sum_{i = 1}^{n}{\langle |B|e_i, e_i\rangle}^p\Big)^\frac{1}{p}
\\& \leq \Big(\sum_{i = 1}^{n}{\langle |B + \gamma A|e_i, e_i\rangle}^p\Big)^\frac{1}{p} \leq {\Big\||B + \gamma A|\Big\|}_p = {\|B + \gamma A\|}_p.
\end{align*}
Hence ${\|B\|}_p \leq {\|B + \gamma A\|}_p$, or equivalently $B\perp_{BJ}^p A$.
\end{proof}
The following example shows that statements (i), (ii) and (iii) in the above theorem,
are not equivalent to $|B + \gamma A| \geq |B|$ for all $\gamma\in\mathbb{C}$, in general.
\begin{example}
Let $B =\begin{bmatrix}
1 & 0 \\
0 & 1
\end{bmatrix}$ and $A = \begin{bmatrix}
1 & 1 \\
-1 & -1
\end{bmatrix}$. Simple computations show that ${\rm tr}(B^*A) = 0$, $A$ is noninvertible and $B\perp_{BJ}^p A$ for all $p\in[1,\infty)$.
But for $\gamma = 1$, we have $\frac{1}{\sqrt{2}}\begin{bmatrix}
3 & -1 \\
-1 & 1
\end{bmatrix} = |B + A| \ngeq |B| = \begin{bmatrix}
1 & 0 \\
0 & 1
\end{bmatrix}$.
\end{example}
As a consequence of Theorem \ref{th.0005}, we have the following result.
\begin{corollary}
Let $A, B\in \mathbb{M}_n(\mathbb{C})$ satisfy $B^*A\geq0$. Then, the following conditions are equivalent:
\begin{itemize}
\item[(i)] $B^*A = 0$.
\item[(ii)] $|B + \gamma A| \geq |B|$ for all $\gamma\in\mathbb{C}$.
\end{itemize}
Furthermore, for every $p\in[1,\infty)$, each one of these assertions implies that $B\perp_{BJ}^p A$.
\end{corollary}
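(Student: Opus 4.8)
The plan is to treat condition (ii) as exactly the hypothesis of Theorem \ref{th.0005} and to exploit the extra assumption $B^*A\geq 0$. First I note that $B^*A\geq 0$ forces $B^*A$ to be Hermitian, so that $A^*B=(B^*A)^*=B^*A$. Hence for every $\gamma\in\mathbb{C}$ one has the expansion
\[
|B+\gamma A|^2=(B+\gamma A)^*(B+\gamma A)=|B|^2+2\,{\rm Re}(\gamma)\,B^*A+|\gamma|^2|A|^2 .
\]
This single identity will drive both implications.

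For (i)$\Rightarrow$(ii) I would substitute $B^*A=0$ (which also gives $A^*B=0$) into the identity above, leaving $|B+\gamma A|^2=|B|^2+|\gamma|^2|A|^2\geq |B|^2$. Since the square root is operator monotone on $[0,\infty)$, taking square roots yields $|B+\gamma A|\geq |B|$ for all $\gamma$, which is (ii). The only point requiring care here is the passage from the inequality between the squares to the inequality between the absolute values; this is legitimate precisely because of L\"owner's operator monotonicity of $t\mapsto\sqrt{t}$, and it cannot be replaced by a naive squaring argument in the reverse direction.

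For (ii)$\Rightarrow$(i) I would observe that (ii) is verbatim the standing hypothesis of Theorem \ref{th.0005}. Applying part (i) of that theorem gives ${\rm tr}(B^*A)=0$. Now the assumption $B^*A\geq 0$ enters decisively: a positive semidefinite matrix whose trace (equivalently, the sum of its nonnegative eigenvalues) vanishes must itself be the zero matrix. Therefore $B^*A=0$, which is (i).

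Finally, the ``furthermore'' clause is immediate: by part (iv) of Theorem \ref{th.0005}, condition (ii) already implies $B\perp_{BJ}^p A$ for every $p\in[1,\infty)$, and since (i)$\Rightarrow$(ii) we conclude that each of the two equivalent assertions implies the orthogonality. I expect no genuine obstacle in this argument: the whole content beyond Theorem \ref{th.0005} is the operator monotonicity of the square root in one direction and the elementary fact that a positive semidefinite matrix with zero trace is zero in the other.
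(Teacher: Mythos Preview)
Your proof is correct and is exactly the natural argument the paper has in mind; the paper itself omits the proof, presenting the result simply as a consequence of Theorem \ref{th.0005}. Your use of the expansion $|B+\gamma A|^2=|B|^2+2\,{\rm Re}(\gamma)\,B^*A+|\gamma|^2|A|^2$ together with the operator monotonicity of $\sqrt{\,\cdot\,}$ for (i)$\Rightarrow$(ii), and the combination of Theorem \ref{th.0005}(i) with the positivity of $B^*A$ for (ii)$\Rightarrow$(i), fills in precisely the details the authors left implicit.
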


\section{Norm--parallelism of operators}
\subsection{Norm--parallelism in $p$-Schatten ideals}
$\quad$

Let $\mathbb{B}_p(\mathcal{H})$ be a $p$-Schatten ideal with $p>0$.
According to \cite{magajna}, we say that $A,B\in \mathbb{B}_p(\mathcal{H})$
are norm--parallel, denoted by $A{\parallel}^p B$,
if there exists $\lambda\in\mathbb{T}$ such that
$${\|A + \lambda B\|}_p = {\|A\|}_p + {\|B\|}_p.$$
The following proposition gives a characterization of parallelism in $\mathbb{B}_p(\mathcal{H})$. This result was previously obtained in \cite{Z.M.1}, for $1<p\leq 2$, with a different proof.
\begin{proposition} \label{paralelo-dependiente}
Let $A, B\in\mathbb{B}_p(\mathcal{H})$ with $1< p<\infty$. Then, the following conditions are equivalent:
\begin{itemize}
\item[(i)] $A{\parallel}^p B$.
\item[(ii)] $A, B$ are linearly dependent.
\end{itemize}
\end{proposition}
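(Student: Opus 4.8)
The plan is to exploit the uniform convexity of $\mathbb{B}_p(\mathcal{H})$ for $1<p<\infty$, which was recorded right after the Clarkson--McCarthy inequalities \eqref{Clarkson1}--\eqref{Clarkson2}, together with the elementary fact that uniform convexity implies strict convexity. The implication (ii)$\Rightarrow$(i) is the routine one, so I would dispose of it first and then concentrate on (i)$\Rightarrow$(ii), which is where the geometry of the space actually enters.

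For (ii)$\Rightarrow$(i), suppose $A$ and $B$ are linearly dependent. If either operator is zero the parallelism holds trivially with any $\lambda\in\mathbb{T}$, so assume both are nonzero and write $B = cA$ with $c\in\mathbb{C}\setminus\{0\}$. Choosing $\lambda = \overline{c}/|c|\in\mathbb{T}$ gives $\lambda c = |c|$, hence $A + \lambda B = (1+|c|)A$ and
$${\|A + \lambda B\|}_p = (1+|c|){\|A\|}_p = {\|A\|}_p + |c|\,{\|A\|}_p = {\|A\|}_p + {\|B\|}_p,$$
so $A{\parallel}^p B$.

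For (i)$\Rightarrow$(ii), assume $A{\parallel}^p B$, so there is $\lambda\in\mathbb{T}$ with ${\|A+\lambda B\|}_p = {\|A\|}_p + {\|B\|}_p$. If $A=0$ or $B=0$ we are done, so suppose both are nonzero; then $\lambda B\neq 0$ and ${\|\lambda B\|}_p = {\|B\|}_p$, so the equality reads ${\|A+\lambda B\|}_p = {\|A\|}_p + {\|\lambda B\|}_p$. The key step is the equality case of the triangle inequality in a strictly convex space. Writing $u = A/{\|A\|}_p$ and $v = \lambda B/{\|\lambda B\|}_p$, the vector
$$\frac{A+\lambda B}{{\|A\|}_p + {\|\lambda B\|}_p} = \frac{{\|A\|}_p}{{\|A\|}_p + {\|\lambda B\|}_p}\,u + \frac{{\|\lambda B\|}_p}{{\|A\|}_p + {\|\lambda B\|}_p}\,v$$
is a convex combination of the unit vectors $u$ and $v$ with strictly positive coefficients whose $p$-norm equals $1$. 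Strict convexity of $\mathbb{B}_p(\mathcal{H})$ then forces $u = v$, that is, $A = \big({\|A\|}_p/{\|B\|}_p\big)\lambda B$, so $A$ and $B$ are linearly dependent.

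The main obstacle I anticipate is not in the algebra but in justifying the equality characterization cleanly: one must first confirm that $\mathbb{B}_p(\mathcal{H})$ is strictly convex (which follows from the uniform convexity already noted from the Clarkson--McCarthy inequalities for $1<p<\infty$) and then invoke the standard fact that in a strictly convex space a nontrivial convex combination of two distinct unit-sphere points has norm strictly less than $1$. This is precisely what fails at the endpoints $p=1$ and $p=\infty$, which is why the hypothesis $1<p<\infty$ is essential.
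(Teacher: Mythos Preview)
Your proof is correct and follows essentially the same route as the paper: both argue the nontrivial direction by normalizing $A$ and $\lambda B$ and invoking the uniform (hence strict) convexity of $\mathbb{B}_p(\mathcal{H})$ for $1<p<\infty$. The only cosmetic difference is that the paper cites an external lemma to rewrite the triangle-equality as $\big\|\tfrac{A}{\|A\|_p}+\tfrac{\lambda B}{\|\lambda B\|_p}\big\|_p=2$ and then appeals directly to uniform convexity, whereas you phrase the same step as a nontrivial convex combination of unit vectors lying on the sphere and invoke strict convexity---your version is slightly more self-contained.
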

\begin{proof}
As we observed, it is evident that if $A$ and $B$ are linearly dependent, then $A{\parallel}^p B$.
Conversely, if $A{\parallel}^p B$, then there exists $\lambda\in\mathbb{T}$ such that
$${\|A + \lambda B\|}_p = {\|A\|}_p + {\|B\|}_p = {\|A\|}_p + {\|\lambda B\|}_p.$$
According to \cite[Corollary 1.5]{ma}, it is equivalent to
$${\left\|\frac{A}{{\|A\|}_p} + \frac{\lambda B}{{\|\lambda B\|}_p}\right\|}_p = 2.$$
Since $\mathbb{B}_p(\mathcal{H})$ is a uniformly convex space for $1<p<\infty$, so ${\left\|\frac{A}{{\|A\|}_p} + \frac{\lambda B}{{\|\lambda B\|}_p}\right\|}_p = 2$ implies that there exists $r\in\mathbb{R}$ such that $B = \frac{r}{\lambda}A$ (see \cite{clk}). Thus $A$ and $B$ are linearly dependent.
\end{proof}
One can similarly show that for any uniformly convex Banach space $\mathcal{X}$ the conditions of parallelism and lineal dependence are equivalent.

The following remark shows that the equivalence between $p-$parallelism and linear dependence does not hold for $p=1, \infty$.
\begin{remark}
Let $A =\begin{bmatrix}
1 & 0 \\
0 & 0
\end{bmatrix}$ and $I = \begin{bmatrix}
1 & 0 \\
0 & 1
\end{bmatrix}$. Then, it is trivial that
$$\|A + I\|_1 = 3 = \|A\|_1 + \|I\|_1,$$
$$\|A + I\| = 2 = \|A\| + \|I\|.$$
It is however evident that $A$ and $I$ are linearly independent.
\end{remark}

In \cite{Z.M.2} the authors related the concept of parallelism to the Birkhoff--James orthogonality for the $p-$Schatten norm and $\mathcal{H}$ a finite dimensional Hilbert space. Utilizing the same ideas, we generalize it to any arbitrary dimension.
\begin{theorem}\label{th.987}
Let $A, B\in\mathbb{B}_p(\mathcal{H})$ with polar decompositions $A=U|A|$ and $B=V|B|$, respectively. If $1<p<\infty$, then the following conditions are equivalent:
\begin{itemize}
\item[(i)] $A{\parallel}^p B$.
\item[(ii)] ${\|A\|}_p\,\big|{\rm tr}(|A|^{p-1}U^*B)\big| = {\|B\|}_p\,{\rm tr}(|A|^p)$.
\item[(iii)] ${\|B\|}_p\,\big|{\rm tr}(|B|^{p-1}V^*A)\big| = \|A\|_p\,{\rm tr}(|B|^p)$.
\item[(iv)] $A, B$ are linearly dependent.
\end{itemize}
\end{theorem}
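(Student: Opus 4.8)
The plan is to build everything on the Lumer semi-inner product $[B,A] = \|A\|_p^{2-p}\,{\rm tr}(|A|^{p-1}U^*B)$ constructed above, together with the orthogonality characterization of Theorem \ref{teo ortog y trans} and the parallelism--orthogonality bridge \eqref{rel.N-P.BJ}. The first observation is that the equivalence (i)$\Leftrightarrow$(iv) is nothing new: it is exactly Proposition \ref{paralelo-dependiente}. Hence the real content is to show that conditions (ii) and (iii) are each equivalent to parallelism, and the cleanest route is to recognize them as the equality cases of the Cauchy--Schwarz inequality (property $(4)$) for $[\cdot,\cdot]$.

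Concretely, the first step would be to rewrite (ii) in terms of the semi-inner product. Since ${\rm tr}(|A|^p)=\|A\|_p^p$, dividing the defining identity in (ii) by $\|A\|_p^{p-1}$ (the case $A=0$ being trivial, as then both sides vanish and $A$, $B$ are linearly dependent) turns (ii) into $\|A\|_p^{2-p}\,\big|{\rm tr}(|A|^{p-1}U^*B)\big| = \|A\|_p\|B\|_p$, that is, $|[B,A]| = \|A\|_p\|B\|_p$. Thus (ii) asserts precisely equality in property $(4)$ of the semi-inner product.

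The second step connects this equality to parallelism. By \eqref{rel.N-P.BJ}, $A\parallel^p B$ holds if and only if $A\perp_{BJ}^p(\|B\|_p A + \lambda\|A\|_p B)$ for some $\lambda\in\mathbb{T}$; by Theorem \ref{teo ortog y trans} this is equivalent to $[\|B\|_p A + \lambda\|A\|_p B,\,A]=0$. Expanding by linearity in the first slot gives $\|B\|_p\|A\|_p^2 + \lambda\|A\|_p[B,A]=0$, i.e. $[B,A]=-\overline{\lambda}\,\|A\|_p\|B\|_p$. Such a $\lambda\in\mathbb{T}$ exists if and only if $|[B,A]|=\|A\|_p\|B\|_p$ (choose $\lambda$ to cancel the unimodular phase of $[B,A]$), which yields (i)$\Leftrightarrow$(ii). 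For (iii) I would invoke the symmetry of norm-parallelism, namely $A\parallel^p B \Leftrightarrow B\parallel^p A$, and rerun the same argument with the roles of $A$ and $B$ interchanged, so that (iii) reads $|[A,B]|=\|B\|_p\|A\|_p$ and is equivalent to $B\parallel^p A$.

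The part that requires the most care is not a deep obstacle but a bookkeeping one: tracking the unimodular factor $\lambda$ and its conjugate when passing between the geometric condition (parallelism) and the algebraic one (equality in Cauchy--Schwarz), and disposing of the degenerate cases $A=0$ or $B=0$ at the outset. The only genuine inputs are the already-established facts that $[\cdot,\cdot]$ is a bona fide semi-inner product on $\mathbb{B}_p(\mathcal{H})$ for $1<p<\infty$ and that the relation $[B,A]=0$ coincides with Birkhoff--James orthogonality (Theorem \ref{teo ortog y trans}); once these are in hand, the equivalences follow from the elementary manipulations above.
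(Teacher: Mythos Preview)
Your proposal is correct and follows exactly the route the paper signals: the paper omits an explicit proof of Theorem~\ref{th.987}, indicating only that one ``utiliz[es] the same ideas'' as in \cite{Z.M.2}, and those ideas are precisely the semi-inner-product machinery of the preceding subsection together with Theorem~\ref{teo ortog y trans}, the bridge~\eqref{rel.N-P.BJ}, and Proposition~\ref{paralelo-dependiente}. Your recasting of (ii) and (iii) as the equality case $|[B,A]|=\|A\|_p\|B\|_p$ of property~(4), and the unwinding via $[\,\|B\|_pA+\lambda\|A\|_pB,\,A]=0$, is the intended argument.
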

\begin{remark}\label{re.9876}
In \cite{Z}, for two trace-class operators $A$ and $B$, the following characterizations was proved.
\begin{itemize}
\item[(1)] $A{\parallel}^1 B$.
\item[(2)] There exist a partial isometry $V$ and $\lambda\in\mathbb{T}$ such that $A = V|A|$ and $B = \lambda V|B|.$
\item[(3)] There exists $\lambda\in\mathbb{T}$ such that
 $$\Big|{\rm tr}(|A|) + \mu\, {\rm tr}(U^*B)\Big|\leq\Big{\|P_{\ker A^*}(A + \mu B)P_{\ker A}\Big\|}_1,$$
 where $A = U|A|$ is the polar decomposition of $A$ and $\mu = \frac{{\|A\|}_1}{{\|B\|}_1}\lambda$.
\end{itemize}
If $A$ is invertible, then (1) to (3) are also equivalent to
\begin{itemize}
\item[(4)] $\Big|{\rm tr}\big(|A|A^{-1}B\big)\Big| = {\|B\|}_1$.
\end{itemize}
\end{remark}
As an immediate consequence of Remark \ref{re.9876} (for $p = 1$) and Theorem
\ref{th.987} (for $1<p<\infty$), we have the following result.
\begin{corollary}
Let $A\in \mathbb{M}_n(\mathbb{C})$ and $p\in[1,\infty)$.
Then, the following conditions are equivalent:
\begin{itemize}
\item[(i)] $A{\parallel}^p I$.
\item[(ii)] $|{\rm tr}(A)| = n^{\frac{p-1}{p}}{\|A\|}_p$.
\end{itemize}
\end{corollary}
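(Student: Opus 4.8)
The plan is to treat the two ranges $p=1$ and $1<p<\infty$ separately, since the corollary is asserted to follow from Remark \ref{re.9876} in the former case and from Theorem \ref{th.987} in the latter, and these two results cover complementary values of $p$. In both cases the whole argument reduces to specializing the cited characterization to the second operator being $I$ and recording the elementary data of the identity: its polar decomposition is $I=I\cdot I$, so that $|I|=I$, the associated partial isometry is $I$, ${\rm tr}(|I|^p)={\rm tr}(I)=n$, and ${\|I\|}_p=\big(\sum_{i=1}^{n}1^p\big)^{1/p}=n^{1/p}$. I would also invoke at the outset that norm--parallelism is symmetric, i.e. $A{\parallel}^p I\Leftrightarrow I{\parallel}^p A$, which is the feature that lets me place the (invertible) identity into whichever slot a given characterization requires.

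For $1<p<\infty$ I would apply Theorem \ref{th.987} directly with $B=I$, using the equivalence (i)$\Leftrightarrow$(iii). Substituting $V=I$, $|B|=I$, ${\|B\|}_p=n^{1/p}$ and ${\rm tr}(|B|^p)=n$ into condition (iii), the quantity ${\rm tr}(|B|^{p-1}V^*A)$ collapses to ${\rm tr}(A)$, and (iii) becomes $n^{1/p}\,|{\rm tr}(A)|={\|A\|}_p\,n$. Dividing by $n^{1/p}$ yields $|{\rm tr}(A)|=n^{1-1/p}{\|A\|}_p=n^{\frac{p-1}{p}}{\|A\|}_p$, which is exactly condition (ii). Since Theorem \ref{th.987} gives (i)$\Leftrightarrow$(iii), this settles the equivalence for all $p$ in the open interval.

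For $p=1$ the exponent $n^{\frac{p-1}{p}}$ equals $n^0=1$, so condition (ii) reads $|{\rm tr}(A)|={\|A\|}_1$. Here I would use symmetry to replace $A{\parallel}^1 I$ by $I{\parallel}^1 A$ and then apply Remark \ref{re.9876} with the first operator taken to be $I$. Because $I$ is invertible, characterization (4) of that remark is available, and with $|I|=I$ and $I^{-1}=I$ it reads $\big|{\rm tr}(|I|I^{-1}A)\big|=\big|{\rm tr}(A)\big|={\|A\|}_1$, again matching (ii). I expect no genuine obstacle in this corollary: the content is entirely contained in the two cited results, and the only points requiring care are the bookkeeping of the polar decomposition of $I$ (so that the traces simplify correctly) and the use of symmetry of $\parallel$ in order to deploy the invertibility-restricted characterization (4) when $p=1$.
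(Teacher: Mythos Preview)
Your proposal is correct and follows exactly the route indicated by the paper: the paper presents no detailed argument but merely declares the corollary an immediate consequence of Remark~\ref{re.9876} for $p=1$ and Theorem~\ref{th.987} for $1<p<\infty$, and your computation is precisely the specialization of those results to $B=I$ (using symmetry of $\parallel$ to invoke part (4) of the remark with $I$ in the invertible slot). The bookkeeping with $|I|=I$, $V=I$, ${\rm tr}(|I|^p)=n$, and $\|I\|_p=n^{1/p}$ is all in order.
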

\subsection{Norm--parallelism to the identity operator}
$\quad$

In this section we investigate the case when an operator is norm--parallel to the identity operator.
In the context of bounded linear operators on Hilbert spaces, the well-known \textit{Daugavet equation}
$$\|A + I\| = \|A\| + 1$$
is a particular case of parallelism; see \cite{WER, Z.M.1} and
the references therein. Such equation is one useful property in solving a variety of problems in approximation theory. We notice that if $A\in \mathbb{B}(\mathcal{H})$ satisfies the Daugavet equation, then $A$ is a normaloid operator, i.e., $\|A\|=w(A)$ where $w(A)$ is the numerical radius of $A$. Reciprocally, a normaloid operator does not necessarily satisfy Daugavet equation, for instance consider $A=-I$.

In order to obtain a characterization of norm--parallelism let us give the following definition.
The numerical radius is the seminorm defined on $\mathbb{B}(\mathcal{X})$ by
$$v(A) : = \sup\Big\{|x^*(Ax)|\,: \,\,x\in \mathbb{S}_\mathcal{X}, x^*\in\mathbb{S}_{\mathcal{X}^*}, x^*(x) = 1\Big\}$$
for each $A\in\mathbb{B}(\mathcal{X})$.
\begin{theorem}\label{th.0}
Let $\mathcal{X}$ be a Banach space and $A\in\mathbb{B}(\mathcal{X})$.
Then the following statements are equivalent:
\begin{itemize}
\item[(i)] $A\parallel I$.
\item[(ii)] $\|A\| = v(A)$.
\end{itemize}
\end{theorem}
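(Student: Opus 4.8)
The plan is to prove the two implications separately, relying throughout on the elementary bound $v(A)\le\|A\|$, which is immediate from $|x^*(Ax)|\le\|x^*\|\,\|Ax\|\le\|A\|$ for every admissible pair. Consequently (ii) is equivalent to the single inequality $v(A)\ge\|A\|$. Since the case $A=0$ is trivial (indeed $\|0+I\|=1=\|0\|+\|I\|$ and $v(0)=0$), I assume throughout that $A\neq0$.

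For (ii)$\Rightarrow$(i) I would argue by a direct extraction. As $v(A)=\|A\|$, choose admissible pairs $(x_n,x_n^*)$, i.e. $x_n\in\mathbb{S}_\mathcal{X}$, $x_n^*\in\mathbb{S}_{\mathcal{X}^*}$ and $x_n^*(x_n)=1$, with $|x_n^*(Ax_n)|\to\|A\|$. Writing $x_n^*(Ax_n)=|x_n^*(Ax_n)|\,\omega_n$ with $\omega_n\in\mathbb{T}$ and using the compactness of $\mathbb{T}$, I pass to a subsequence along which $\omega_n\to\omega\in\mathbb{T}$ and set $\lambda=\omega$. Since $x_n^*(x_n)=1$,
$$\|A+\lambda I\|\ge\big|x_n^*\big((A+\lambda I)x_n\big)\big|=\big|x_n^*(Ax_n)+\lambda\big|\longrightarrow\big|(\|A\|+1)\,\omega\big|=\|A\|+1,$$
while the triangle inequality gives $\|A+\lambda I\|\le\|A\|+1$. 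Hence $\|A+\lambda I\|=\|A\|+\|I\|$, that is $A\parallel I$. This direction is routine.

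For (i)$\Rightarrow$(ii), suppose $\|A+\lambda I\|=\|A\|+1$ for some $\lambda\in\mathbb{T}$. Since $A+\lambda I\neq0$, the Hahn--Banach theorem furnishes a norming functional $\Phi$ on the Banach space $\mathbb{B}(\mathcal{X})$ with $\|\Phi\|=1$ and $\Phi(A+\lambda I)=\|A+\lambda I\|=\|A\|+1$. Putting $a=\Phi(A)$ and $b=\lambda\Phi(I)$, one has $|a|\le\|A\|$, $|b|\le1$ and $a+b=\|A\|+1$; comparing real parts forces $\mathrm{Re}(a)=\|A\|$ and $\mathrm{Re}(b)=1$, whence $a=\|A\|$ and $b=1$, i.e. $\Phi(A)=\|A\|$ and $\Phi(I)=\overline{\lambda}$. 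Replacing $\Phi$ by $\Psi=\lambda\Phi$ yields a norm-one functional with $\Psi(I)=1$ and $|\Psi(A)|=\|A\|$, so that $\|A\|$ belongs to the (algebra) numerical range of $A$.

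The main obstacle is then to pass from this functional on $\mathbb{B}(\mathcal{X})$ back to the spatial quantity $v(A)$: the functional $\Psi$ is an abstract state of the algebra and need not, a priori, be of the spatial form $T\mapsto x^*(Tx)$. I would close this gap by invoking the classical identification, due to Bonsall and Duncan, that the spatial numerical range $W(A)=\{x^*(Ax):x\in\mathbb{S}_\mathcal{X},\,x^*\in\mathbb{S}_{\mathcal{X}^*},\,x^*(x)=1\}$ has algebra numerical range $V(A)=\overline{\mathrm{co}}\,W(A)$, so that the two numerical radii coincide, $\sup\{|z|:z\in V(A)\}=\sup\{|w|:w\in W(A)\}=v(A)$; this gives $v(A)\ge|\Psi(A)|=\|A\|$ and finishes the proof. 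Equivalently, and this is the delicate point, one may try to argue spatially: taking $x_n\in\mathbb{S}_\mathcal{X}$ with $\|(A+\lambda I)x_n\|\to\|A\|+1$ and norming functionals $f_n$ of $(A+\lambda I)x_n$, the same real-part comparison gives $f_n(Ax_n)\to\|A\|$ and $\lambda f_n(x_n)\to1$, so the unit functionals $\lambda f_n$ satisfy the state constraint $x^*(x)=1$ only asymptotically; the difficulty is precisely that near-norming functionals need not lie near genuine norming functionals in a general Banach space, which is exactly the point settled by the Bonsall--Duncan identification.
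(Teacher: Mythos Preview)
Your proof is correct. For (ii)$\Rightarrow$(i) you do exactly what the paper does, and in fact your compactness argument on $\mathbb{T}$ is more explicit than the paper's (the paper fixes $\varepsilon>0$, produces $\lambda=\lambda(\varepsilon)$ with $\|I+\overline{\lambda}A\|>1+\|A\|-\varepsilon$, and then lets $\varepsilon\to0$ without saying why a single $\lambda$ survives).

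For (i)$\Rightarrow$(ii) your route is genuinely different. The paper argues spatially from the start: from $A\parallel I$ it extracts $\lambda\in\mathbb{T}$ with $\|I+\lambda rA\|=1+r\|A\|$ for every $r\geq0$, and then applies the classical derivative formula
\[
\sup\big\{\mathrm{Re}\,x^*(\lambda Ax):\,x\in\mathbb{S}_\mathcal{X},\,x^*\in\mathbb{S}_{\mathcal{X}^*},\,x^*(x)=1\big\}=\lim_{r\to0^+}\frac{\|I+\lambda rA\|-1}{r}=\|A\|,
\]
which immediately gives $v(A)\geq\|A\|$. You instead Hahn--Banach to a norm-one functional $\Psi$ on $\mathbb{B}(\mathcal{X})$ with $\Psi(I)=1$ and $|\Psi(A)|=\|A\|$, and then invoke Bonsall--Duncan's identification $V(A)=\overline{\mathrm{co}}\,W(A)$ to equate the algebra and spatial numerical radii. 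Both arguments are valid and lean on comparable classical machinery; the paper's has the advantage of bypassing altogether the ``asymptotic state'' difficulty you flag in your last paragraph, since it never needs a single admissible pair, only the supremum. Your approach, on the other hand, makes transparent that the result is really a statement about states on the algebra $\mathbb{B}(\mathcal{X})$.
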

\begin{proof}
(i)$\Rightarrow$(ii) Suppose (i) holds. Since the norm--parallelism is symmetric and $\mathbb{R}$-homogenous, so there exists $\lambda\in\mathbb{T}$ such that $$\|I + \lambda r A\| = 1 + |r|\|A\| \qquad (r\in [0,+\infty)).$$
It was shown in \cite{D} that
$$\sup\Big\{\mbox{Re}x^*(\lambda Ax)\,: \,\,x\in \mathbb{S}_\mathcal{X}, x^*\in\mathbb{S}_{\mathcal{X}^*}, x^*(x) = 1\Big\} = \lim_{r\rightarrow 0^{+}} \frac{\|I + \lambda r A\| - 1}{r},$$
hence
$$\sup\Big\{\mbox{Re}x^*(\lambda Ax)\,: \,\,x\in \mathbb{S}_\mathcal{X}, x^*\in\mathbb{S}_{\mathcal{X}^*}, x^*(x) = 1\Big\} = \|A\|.$$
Now, by the above equality, we have
$$\|A\| \geq v(A) = v(\lambda A) \geq \sup\Big\{\mbox{Re}x^*(\lambda Ax)\,: \,\,x\in \mathbb{S}_\mathcal{X}, x^*\in\mathbb{S}_{\mathcal{X}^*}, x^*(x) = 1\Big\} = \|A\|.$$
Hence $\|A\| = v(A)$.

(ii)$\Rightarrow$(i) Let $\|A\| = v(A)$. For every $\varepsilon > 0$, we may find $x\in \mathbb{S}_\mathcal{X}$ and $x^*\in \mathbb{S}_{\mathcal{X}^*}$ such that
$x^*(x) = 1$ and $\big|x^*(Ax)\big| > \|A\| -\varepsilon$. Let $x^*(Ax) = \lambda\big|x^*(Ax)\big|$ for some $\lambda\in\mathbb{T}$. We have
\begin{align*}
1 + \|A\| \geq \|I + \overline{\lambda} A\| &\geq \|x + \overline{\lambda} Ax\|
\\& \geq \Big|x^*\big(x + \overline{\lambda} Ax\big)\Big| = \Big|x^*(x) + \overline{\lambda} x^*(Ax)\big)\Big|
\\& = \Big|1 + \overline{\lambda} \big(\lambda\big|x^*(Ax)\big|\big)\Big| = 1 + \big|x^*(Ax)\big|> 1 + \|A\| -\varepsilon.
\end{align*}
Hence $$1 + \|A\| \geq \|I + \overline{\lambda} A\|> 1 + \|A\| -\varepsilon.$$
Letting $\varepsilon\rightarrow0^+$, we obtain $\|I + \overline{\lambda} A\| = 1 + \|A\| $, or equivalently, $\|A + \lambda I\| = \|A\| + 1 $.
Thus $A\parallel I$.
\end{proof}

In the following proposition, we present a new proof of the previous result in Hilbert space context.
\begin{proposition}\label{normaloid}
Let $\mathcal{H}$ be a Hilbert space and $A\in \mathbb{B}(\mathcal{H})$. Then the following statements are equivalent:
\begin{itemize}
\item[(i)] $A\parallel I$.
\item[(ii)] $\|A\|=w(A)$.
\end{itemize}
\end{proposition}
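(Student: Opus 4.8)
The plan is to argue directly in the Hilbert space, exploiting the expansion of $\|(A+\lambda I)x\|^2$ rather than appealing to the Banach space numerical radius $v(A)$ from Theorem~\ref{th.0}. Recall that on a Hilbert space $w(A)=\sup\{|\langle Ax,x\rangle|:\ x\in\mathbb{S}_{\mathcal H}\}$ and that $w(A)\le\|A\|$ always holds, so in both implications only the reverse estimate, or the construction of a suitable $\lambda$, is at stake. The single computation driving everything is, for a unit vector $x$ and $\lambda\in\mathbb T$,
\[
\|(A+\lambda I)x\|^2=\|Ax\|^2+2\,\mathrm{Re}\big(\overline{\lambda}\langle Ax,x\rangle\big)+1,
\]
in which the cross term is bounded above by $w(A)$ and can be pushed up to $\|A\|$ by a judicious choice of phase. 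We may assume $A\ne 0$, the case $A=0$ being immediate.

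For (i)$\Rightarrow$(ii), I would start from a $\lambda\in\mathbb T$ with $\|A+\lambda I\|=\|A\|+1$ and choose unit vectors $x_n$ with $\|(A+\lambda I)x_n\|\to\|A\|+1$. Feeding these into the identity above and using $\|Ax_n\|\le\|A\|$ together with $\mathrm{Re}(\overline{\lambda}\langle Ax_n,x_n\rangle)\le|\langle Ax_n,x_n\rangle|\le w(A)$ yields $(\|A\|+1)^2\le\|A\|^2+2w(A)+1$, that is, $\|A\|\le w(A)$; combined with $w(A)\le\|A\|$ this gives (ii).

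For (ii)$\Rightarrow$(i), I would take unit vectors $x_n$ with $|\langle Ax_n,x_n\rangle|\to w(A)=\|A\|$. The first, minor, point is that, since $|\langle Ax_n,x_n\rangle|\le\|Ax_n\|\le\|A\|$, a squeeze forces $\|Ax_n\|\to\|A\|$ as well. The second and genuinely delicate point is the choice of $\lambda$: writing $\langle Ax_n,x_n\rangle=r_n e^{i\theta_n}$ with $r_n\to\|A\|>0$, the phases $e^{i\theta_n}$ need not converge, so I would invoke compactness of $\mathbb T$ to pass to a subsequence with $e^{i\theta_n}\to\lambda\in\mathbb T$. Along this subsequence $\mathrm{Re}(\overline{\lambda}\langle Ax_n,x_n\rangle)=r_n\cos(\theta_n-\arg\lambda)\to\|A\|$, so the identity gives $\|(A+\lambda I)x_n\|^2\to(\|A\|+1)^2$; hence $\|A+\lambda I\|\ge\|A\|+1$, and the reverse triangle inequality $\|A+\lambda I\|\le\|A\|+1$ closes the argument, yielding $A\parallel I$.

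The main obstacle is the phase-selection step in (ii)$\Rightarrow$(i): the norming sequence for the numerical radius only controls $|\langle Ax_n,x_n\rangle|$, and one must extract a single unimodular $\lambda$ that simultaneously aligns the phases of (a subsequence of) the $\langle Ax_n,x_n\rangle$ so that the cross term attains $\|A\|$ in the limit. Compactness of $\mathbb T$ handles this cleanly, and the rest is the elementary norm identity above.
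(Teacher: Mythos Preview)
Your proof is correct, but it follows a different route from the paper's own argument. The paper works through the bridge \eqref{rel.N-P.BJ} between norm--parallelism and Birkhoff--James orthogonality: for (i)$\Rightarrow$(ii) it rewrites $A\parallel I$ as $I\perp_{BJ}(\|A\|I-\lambda A)$ and then invokes the Magajna/Bhatia--\v{S}emrl characterization of this orthogonality to produce unit vectors with $\langle \overline{\lambda}A^*x_n,x_n\rangle\to\|A\|$; for (ii)$\Rightarrow$(i) it again passes through $I\perp_{BJ}(e^{i\theta}\|A\|I-A)$ after the same phase--extraction via Bolzano--Weierstrass that you use. Your argument replaces all of this by the single elementary identity $\|(A+\lambda I)x\|^2=\|Ax\|^2+2\,\mathrm{Re}(\overline{\lambda}\langle Ax,x\rangle)+1$, squeezing both directions out of it directly. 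The trade--off: your version is entirely self--contained and avoids external citations, while the paper's version emphasizes the structural link between parallelism and B--J orthogonality that runs through the article. The one shared ingredient is the compactness of $\mathbb{T}$ to align phases in (ii)$\Rightarrow$(i).
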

\begin{proof}
(i)$\Rightarrow$(ii)
By \eqref{rel.N-P.BJ}, $I \perp _{BJ} \|A\|I - \lambda A$ for some $\lambda \in \mathbb{T}$. Using \cite{magajna} or \cite{B.S}, there exists a sequence $\{x_n\}$ of unit vectors such that
\begin{enumerate}
\item $\|x_n\|=1 \to \|I\|$ and
\item $\langle \overline{\lambda}A^*x_n, x_n\rangle \to \|A\|.$
\end{enumerate}
Then,
$$
\left|\: |\langle A^*x_n, x_n\rangle| - \|A\|\:\right|\leq \left| \langle \overline{\lambda} A^*x_n, x_n\rangle - \|A\| \right|\to 0
$$
when $n\to \infty$. Hence $w(A^*)=w(A)=\|A\|.$ From this we deduce that $A$ is normaloid.\\
(ii)$\Rightarrow$(i) Let (ii) holds. It is known that for any operator $A\in \mathbb{B}(\mathcal{H})$, $w(A)=\|A\|$ if and only if $r(A)=\|A\|$, where $r(A)$ is the spectral radius of $A$. So that there exists a sequence $\{y_n\}$ of unit vectors such that $|\langle A^*y_n, y_n\rangle| \to \|A\|.$ We denote $z_n=e^{i\theta_n}|\langle A^*y_n, y_n\rangle|$. By the complex Bolzano-Weierstrass Theorem, there exists a subsequence $\{\theta_{n_k}\}$ such that $e^{i\theta_{n_k}}\to e^{i\theta}$. Then $z_{n_k}\to e^{i\theta}\|A\|$ or equivalently
$\langle e^{i\theta}\|A\| I- A^*x_{n_k}, x_{n_k} \rangle\to 0$ from which we deduce that $I\perp _{BJ}e^{i\theta}\|A\|I- A$ and this completes the proof.
\end{proof}
\subsection{Norm--parallelism in locally uniformly convex spaces}
$\quad$

A Banach space $\mathcal{X}$ is said to be \textit{locally uniformly convex}
whenever for each $x\in \mathbb{S}_\mathcal{X}$ and each $0 <\varepsilon< 2$ there exists
some $0 < \delta<1$ such that $y\in \mathbb{B}_\mathcal{X}$, and
$\|x-y\|\geq\varepsilon$ imply $\left\|\frac{x+y}{2}\right\|<1-\delta$. Note that in a locally uniformly convex
the following condition holds:
if for any sequence $\{x_n\}$ in $\mathbb{B}_\mathcal{X}$ and for any $y$
in $\mathbb{S}_\mathcal{X}$ with $\lim\limits_{n\rightarrow\infty}\|x_n + y\| = 2$,
we have $\lim\limits_{n\rightarrow\infty}\|x_n - y\| = 0$.
It is obvious that every uniformly convex space is also locally uniformly convex.
\begin{lemma}\label{lemma-A-T}
Let $\mathcal{X}, \mathcal{Y}$ be Banach spaces. Suppose that $\mathcal{Y}$ is locally uniformly convex.
Let $A \in \mathbb{K}(\mathcal{X}, \mathcal{Y})$ and $B \in \mathbb{B}(\mathcal{X}, \mathcal{Y})$. Suppose that $A \neq 0 \neq B$.
If $A\parallel B$, then there are $\lambda \in \mathbb{T}$, $\{x_n\}\subset \mathbb{S}_\mathcal{X}$ and
$y \in \mathbb{S}_\mathcal{Y}$ such that
$$\lim\limits_{n\rightarrow\infty}\frac{A}{\|A\|}x_n = y, \quad\qquad \lim\limits_{n\rightarrow\infty}\lambda\frac{B}{\|B\|}x_n = y.$$
\end{lemma}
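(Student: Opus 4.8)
The plan is to read off a maximizing sequence from the parallelism relation and then combine compactness of $A$ with local uniform convexity of $\mathcal{Y}$. Since $A\parallel B$, there is $\lambda\in\mathbb{T}$ with $\|A+\lambda B\|=\|A\|+\|B\|$, and from the definition of the operator norm I choose $\{x_n\}\subset\mathbb{S}_\mathcal{X}$ with $\|(A+\lambda B)x_n\|\to\|A\|+\|B\|$. The triangle inequality gives
$$\|(A+\lambda B)x_n\|\le\|Ax_n\|+\|Bx_n\|\le\|A\|+\|B\|,$$
and since the left-hand side tends to $\|A\|+\|B\|$, a squeezing argument forces $\|Ax_n\|\to\|A\|$ and $\|Bx_n\|\to\|B\|$ separately.

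Next I use compactness: the bounded sequence $\{x_n\}$ makes $\{Ax_n\}$ relatively compact, so after passing to a subsequence (and relabelling) I may assume $Ax_n\to z$ for some $z\in\mathcal{Y}$. Then $\|z\|=\lim\|Ax_n\|=\|A\|\ne0$, so $y:=z/\|A\|\in\mathbb{S}_\mathcal{Y}$ and the first required limit $\frac{A}{\|A\|}x_n\to y$ holds. Writing $u_n:=\frac{Ax_n}{\|A\|}$ and $v_n:=\frac{\lambda Bx_n}{\|B\|}$, both sequences lie in $\mathbb{B}_\mathcal{Y}$ (because $\|Ax_n\|\le\|A\|$ and $\|Bx_n\|\le\|B\|$), with $u_n\to y$ and $\|v_n\|\to1$; the goal reduces to showing $v_n\to y$.

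The crux is to prove $\|u_n+v_n\|\to2$. Put $c:=\min\{\|A\|,\|B\|\}>0$ and decompose
$$\|A\|u_n+\|B\|v_n=c(u_n+v_n)+(\|A\|-c)u_n+(\|B\|-c)v_n.$$
Using $\|u_n\|,\|v_n\|\le1$ and the triangle inequality, this yields
$$c\,\|u_n+v_n\|\ge\big\|\,\|A\|u_n+\|B\|v_n\,\big\|-(\|A\|+\|B\|-2c).$$
Since $\|A\|u_n+\|B\|v_n=(A+\lambda B)x_n$ has norm tending to $\|A\|+\|B\|$, taking $\liminf$ gives $\liminf\|u_n+v_n\|\ge2$; combined with $\|u_n+v_n\|\le2$ this forces $\|u_n+v_n\|\to2$. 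Because $u_n\to y$, the estimate $\big|\,\|v_n+y\|-\|u_n+v_n\|\,\big|\le\|y-u_n\|\to0$ then gives $\|v_n+y\|\to2$.

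Finally I invoke the hypothesis: with $y\in\mathbb{S}_\mathcal{Y}$, $\{v_n\}\subset\mathbb{B}_\mathcal{Y}$ and $\|v_n+y\|\to2$, local uniform convexity of $\mathcal{Y}$ forces $\|v_n-y\|\to0$, that is, $\lambda\frac{B}{\|B\|}x_n\to y$, which completes the proof. The step demanding the most care is the passage to a convergent subsequence of $\{Ax_n\}$: this is precisely where compactness of $A$ (not mere boundedness) is used, as it is what upgrades the norm statements into an actual limit vector $y\in\mathbb{S}_\mathcal{Y}$ to which both normalized sequences converge.
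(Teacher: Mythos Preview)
Your proof is correct and follows essentially the same path as the paper's: extract a norm-attaining sequence, pass to a convergent subsequence via compactness of $A$ to produce the limit $y$, and then invoke local uniform convexity of $\mathcal{Y}$ to force the second sequence to converge to $y$ as well. The only difference is that the paper first normalizes via the $\mathbb{R}$-homogeneity of $\parallel$ to obtain $\left\|\frac{A}{\|A\|}+\lambda\frac{B}{\|B\|}\right\|=2$ directly, so that $\|u_n+v_n\|\to 2$ is immediate from the choice of $\{x_n\}$ and your decomposition step becomes unnecessary.
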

\begin{proof}
Suppose that $A\parallel B$ holds.
Since the norm--parallelism is $\mathbb{R}$-homogenous, we have $\frac{A}{\|A\|}\parallel \frac{B}{\|B\|}$.
Hence, there exists $\lambda\in\mathbb{T}$ such that
$$\left\|\frac{A}{\|A\|} + \lambda \frac{B}{\|B\|}\right\| = \left\|\frac{A}{\|A\|}\right\| + \left\|\frac{B}{\|B\|}\right\| = 2.$$
Since
$$\sup\left\{\left\|\frac{A}{\|A\|}x_n + \lambda \frac{B}{\|B\|}x_n\right\|:\, x_n\in \mathbb{S}_\mathcal{X}\right\} = \left\|\frac{A}{\|A\|} + \lambda \frac{B}{\|B\|}\right\|,$$
there exists a sequence of unit vectors $\{x_n\}$ in $\mathcal{X}$ such that
\begin{align}\label{id.2.0-A-T}
\lim_{n\rightarrow\infty} \left\|\frac{A}{\|A\|}x_n + \lambda \frac{B}{\|B\|}x_n\right\| = 2.
\end{align}
By virtue of compactness of $A$, there exist a subsequence $\{x_{n_k}\}$ and $y \in \mathbb{S}_\mathcal{Y}$ such that
\begin{align}\label{id.2.1-A-T}
\lim_{k\rightarrow\infty} \frac{A}{\|A\|}x_{n_k} = y.
\end{align}
From
\begin{align*}
\left\|\frac{A}{\|A\|}x_{n_k} + \lambda \frac{B}{\|B\|}x_{n_k}\right\|& \leq\left\|\frac{A}{\|A\|}x_{n_k} -y\right\|+\left\|y + \lambda \frac{B}{\|B\|}x_{n_k}\right\|
\\&\leq\left\|\frac{A}{\|A\|}x_{n_k} -y\right\|+\|y\| + \left\|\lambda \frac{B}{\|B\|}x_{n_k}\right\|
\\&\leq\left\|\frac{A}{\|A\|}x_{n_k} -y\right\|+1 + 1
\end{align*}
as well as (\ref{id.2.0-A-T}) and (\ref{id.2.1-A-T}) we obtain
$\lim\limits_{n\rightarrow\infty} \left\|\lambda \frac{B}{\|B\|}x_{n_k}\right\| = 1$ and
$\lim\limits_{n\rightarrow\infty} \left\|y + \lambda \frac{B}{\|B\|}x_{n_k}\right\| = 2$.
Since $\mathcal{Y}$ is locally uniformly convex, we infer that
$\lim_{n\rightarrow\infty} \left\|\lambda\frac{B}{\|B\|}x_{n_k} - y\right\| = 0$.
Thus $\lim_{n\rightarrow\infty} \lambda\frac{B}{\|B\|}x_{n_k} = y$.
Now, the proof is completed.
\end{proof}
\begin{theorem}\label{th.1}
Let $\mathcal{X}$ be a closed subspace of a locally uniformly convex Banach space $\mathcal{Y}$.
Let $J\in \mathbb{B}(\mathcal{X}, \mathcal{Y})$ denote the inclusion operator (i.e., $Jx = x$ for all $x$ in $\mathcal{X}$).
Let $A\in\mathbb{K}(\mathcal{X}, \mathcal{Y})$.
Then the following statements are equivalent:
\begin{itemize}
\item[(i)] $A\parallel J$.
\item[(ii)] $\lambda\|A\|$ is an eigenvalue of $A$ for some $\lambda\in\mathbb{T}$.
\end{itemize}
\end{theorem}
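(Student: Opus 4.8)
The plan is to prove both implications from Lemma~\ref{lemma-A-T} together with a short eigenvector computation, after disposing of the degenerate case. If $A=0$ then $\|A\|=0$ is trivially an eigenvalue of $A$ (any unit vector is an eigenvector) and $A\parallel J$ holds since $\|0+\lambda J\|=\|J\|=1=\|A\|+\|J\|$; so both statements are true and we may assume $A\neq 0$. Note also that, as $J$ is the inclusion of a nonzero closed subspace, $\|J\|=1$ and $Jx=x$ for every $x\in\mathcal{X}$, which is what lets me apply the lemma with $B=J$.

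For (i)$\Rightarrow$(ii) I would invoke Lemma~\ref{lemma-A-T} with $B=J$. It furnishes $\lambda\in\mathbb{T}$, a sequence $\{x_n\}\subset\mathbb{S}_\mathcal{X}$, and $y\in\mathbb{S}_\mathcal{Y}$ with $\frac{A}{\|A\|}x_n\to y$ and $\lambda\frac{J}{\|J\|}x_n\to y$. Since $\|J\|=1$ and $Jx_n=x_n$, the second relation reads $\lambda x_n\to y$, hence $x_n\to\overline{\lambda}y=:x_0$. Here closedness of $\mathcal{X}$ is essential: each $x_n\in\mathcal{X}$, so the limit $x_0$ lies in $\mathcal{X}$, and $\|x_0\|=\|y\|=1$ gives $x_0\in\mathbb{S}_\mathcal{X}$ together with $y=\lambda x_0$. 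Now continuity of $A$ yields $\frac{A}{\|A\|}x_n\to\frac{1}{\|A\|}Ax_0$, and comparing with the first limit (uniqueness of limits) gives $Ax_0=\|A\|y=\lambda\|A\|x_0$. Thus $\lambda\|A\|$ is an eigenvalue of $A$, which is exactly (ii). The one point requiring care is precisely the use of closedness of $\mathcal{X}$ to guarantee that the limiting vector $x_0$ genuinely belongs to $\mathcal{X}$ and is nonzero, so that the relation $Ax_0=\lambda\|A\|x_0$, a priori an identity in $\mathcal{Y}$, is a bona fide eigenvalue equation.

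For (ii)$\Rightarrow$(i), which is essentially a one-line verification, suppose $Ax_0=\lambda\|A\|x_0$ for some $\lambda\in\mathbb{T}$ and some $x_0\in\mathbb{S}_\mathcal{X}$. Then $(A+\lambda J)x_0=Ax_0+\lambda x_0=\lambda(\|A\|+1)x_0$, so $\|A+\lambda J\|\geq\|(A+\lambda J)x_0\|=\|A\|+1$, while the triangle inequality gives the reverse bound $\|A+\lambda J\|\leq\|A\|+\|J\|=\|A\|+1$. Hence $\|A+\lambda J\|=\|A\|+\|J\|$, that is, $A\parallel J$.

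Overall the substance is concentrated in (i)$\Rightarrow$(ii), and even there the hard analytic work (extracting a convergent sequence via compactness of $A$ and exploiting local uniform convexity of $\mathcal{Y}$) is already packaged inside Lemma~\ref{lemma-A-T}. The only remaining obstacle is bookkeeping: identifying $y$ with $\lambda x_0$ and invoking closedness of $\mathcal{X}$ and continuity of $A$ so that the approximating sequence converges to a true eigenvector rather than merely an approximate one.
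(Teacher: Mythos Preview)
Your proof is correct and follows essentially the same route as the paper: Lemma~\ref{lemma-A-T} applied with $B=J$ supplies the convergent sequence, and continuity of $A$ turns the approximate eigenvector into a genuine one, while (ii)$\Rightarrow$(i) is the same direct estimate at the eigenvector. If anything, you are slightly more careful than the paper in disposing of the case $A=0$ and in explicitly invoking closedness of $\mathcal{X}$ to ensure the limit vector lies in $\mathcal{X}$.
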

\begin{proof}
(i)$\Rightarrow$(ii) Suppose that $A\parallel J$ holds.
It follows from Lemma \ref{lemma-A-T} that,
there exist a subsequence $\{x_n\}\subset \mathbb{S}_\mathcal{X}$ and $y\in \mathbb{S}_\mathcal{X}$ and $\lambda\in \mathbb{T}$ such that
\begin{align}\label{id.2.2}
\lim_{k\rightarrow\infty} \frac{A}{\|A\|}x_n = y, \quad\qquad\lim_{k\rightarrow\infty} \lambda x_n = y.
\end{align}
Due to $A$ is continuous and using the second equality in (\ref{id.2.2}), we get
\begin{align}\label{id.2.3}
\lim_{k\rightarrow\infty} \frac{A}{\|A\|}(\lambda x_n) = \frac{A}{\|A\|}y.
\end{align}
By the first equality in (\ref{id.2.2}) and by (\ref{id.2.3}) we reach that $\frac{A}{\|A\|}y = \lambda y$, or equivalently, $Ay = \lambda\|A\|y$.
Thus $\lambda \|A\|$ is an eigenvalue of $A$.

(ii)$\Rightarrow$(i) Suppose (ii) holds. So, there exists $x\in \mathcal{X}\setminus\{0\}$ such that $Ax = \lambda\|A\|x$. We have
\begin{align*}
\|A + \lambda J\| \geq \left\|(A + \lambda J)\frac{x}{\|x\|}\right\| &= \left\|\lambda\|A\|\frac{x}{\|x\|} + \lambda\frac{x}{\|x\|}\right\| = \|A\| + 1 \geq \|A + \lambda J\|.
\end{align*}
Thus $\|A + \lambda J\| = \|A\|+1=\|A\| + \|J\|$, which means that $A\parallel J$.
\end{proof}
\begin{remark} \label{re.1.1}
Notice that the condition of compactness in the implication (i)$\Rightarrow $(ii) of Theorem \ref{th.1} is essential.
For example, consider the right shift operator $A\,:\ell^2\longrightarrow \ell^2$ defined by
$A(\xi_1, \xi_2, \xi_3, \cdots) = (0, \xi_1, \xi_2, \xi_3, \cdots)$.
It is easily seen that $A\parallel I$ but $A$ has no eigenvalues.
\end{remark}
Let $(\Omega, \mathfrak{M}, \rho)$ be a measure space. It is well known that every $\mathbf{L}^p(\Omega, \mathfrak{M}, \rho)$ space with $1<p<\infty$ is a uniformly convex Banach space. Therefore, as a consequence of Theorem \ref{th.1}, we have the following result.
\begin{corollary}\label{cr.2}
Let $(\Omega, \mathfrak{M}, \rho)$ be a measure space and $A\in\mathbb{K}\big(\mathbf{L}^p((\Omega, \mathfrak{M}, \rho)\big)$ with $1<p<\infty$. Then the following statements are equivalent:
\begin{itemize}
\item[(i)] $A\parallel I$.
\item[(ii)] $\lambda\|A\|$ is an eigenvalue of $A$ for some $\lambda\in\mathbb{T}$.
\end{itemize}
\end{corollary}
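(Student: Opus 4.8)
The plan is to deduce this directly from Theorem \ref{th.1} by specializing the ambient spaces, so the argument is essentially an identification of hypotheses rather than a fresh proof. First I would recall the well-known fact (a consequence of the Clarkson inequalities) that $\mathbf{L}^p(\Omega, \mathfrak{M}, \rho)$ is uniformly convex for $1<p<\infty$, and that, as observed in the text preceding Lemma \ref{lemma-A-T}, every uniformly convex space is locally uniformly convex. Hence $\mathcal{Y} := \mathbf{L}^p(\Omega, \mathfrak{M}, \rho)$ qualifies as a locally uniformly convex Banach space, which is the standing assumption of Theorem \ref{th.1}.

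Next I would take $\mathcal{X} = \mathcal{Y} = \mathbf{L}^p(\Omega, \mathfrak{M}, \rho)$. Then $\mathcal{X}$ is (trivially) a closed subspace of $\mathcal{Y}$ equal to the whole space, and the inclusion operator $J\in\mathbb{B}(\mathcal{X},\mathcal{Y})$ of Theorem \ref{th.1} coincides with the identity operator $I$ on $\mathbf{L}^p$. Moreover the given operator lies in $\mathbb{K}\big(\mathbf{L}^p\big)=\mathbb{K}(\mathcal{X},\mathcal{Y})$, so every hypothesis of Theorem \ref{th.1} is satisfied with this choice of data.

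Finally I would invoke Theorem \ref{th.1} verbatim: with $J=I$, its equivalence (i)$\Leftrightarrow$(ii) reads precisely as ``$A\parallel I$ if and only if $\lambda\|A\|$ is an eigenvalue of $A$ for some $\lambda\in\mathbb{T}$,'' which is exactly the claimed statement. I expect no genuine obstacle in this corollary; the only points meriting even minimal care are the passage from uniform to local uniform convexity of $\mathbf{L}^p$ and the observation that setting $\mathcal{X}=\mathcal{Y}$ collapses the inclusion $J$ to the identity $I$, both of which are immediate.
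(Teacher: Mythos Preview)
Your proposal is correct and matches the paper's approach exactly: the paper simply remarks that $\mathbf{L}^p$ with $1<p<\infty$ is uniformly convex (hence locally uniformly convex) and deduces the corollary from Theorem~\ref{th.1} with $\mathcal{X}=\mathcal{Y}=\mathbf{L}^p$ and $J=I$. There is nothing to add.
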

\begin{remark} \label{re.2.1}
Notice that in the implication (i)$\Rightarrow $(ii) of Corollary \ref{cr.2} the condition $1<p<\infty$ is essential.
For example, let $x(t) = \sin(\pi t)$ and $y(t) = \cos(\pi t)$ with $0\leq t\leq 1$. Consider the rank-one
operator $A = x\otimes y$. Then $A\in\mathbb{K}\big(\mathbf{L}^1([0, 1])\big)\cup\mathbb{K}\big(\mathbf{L}^{\infty}([0, 1])\big)$.
It is easily seen that $A\parallel I$ but the operator $A$ has no non-zero eigenvalue.
\end{remark}
Let $\mathcal{X}$ be a normed space and $A\in\mathbb{B}(\mathcal{X})$. Recall that an invariant subspace for $A$
is a closed linear subspace $\mathcal{X}_0$ of $\mathcal{X}$ such that $A(\mathcal{X}_0)\subseteq \mathcal{X}_0$. The following result shows that the notion
of parallelism is related to the invariant subspace problem.
\begin{corollary}
Let $\mathcal{X}$ be a locally uniformly convex Banach space and $A\in\mathbb{K}(\mathcal{X})$.
If $A\parallel I$, then the operator $A$ has a invariant subspace of dimension one.
\end{corollary}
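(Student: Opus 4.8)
The plan is to derive this as an immediate specialization of Theorem \ref{th.1}. First I would observe that every Banach space $\mathcal{X}$ is a closed subspace of itself, so taking $\mathcal{Y} = \mathcal{X}$ in Theorem \ref{th.1} is legitimate; under this choice the inclusion operator $J \in \mathbb{B}(\mathcal{X}, \mathcal{Y})$ is exactly the identity operator $I$ on $\mathcal{X}$. Since $\mathcal{X}$ is locally uniformly convex and $A \in \mathbb{K}(\mathcal{X}) = \mathbb{K}(\mathcal{X}, \mathcal{X})$, all the hypotheses of Theorem \ref{th.1} are met, and the assumption $A \parallel I$ is precisely $A \parallel J$.

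Next I would invoke the implication (i) $\Rightarrow$ (ii) of Theorem \ref{th.1} to conclude that $\lambda \|A\|$ is an eigenvalue of $A$ for some $\lambda \in \mathbb{T}$. Fixing a nonzero eigenvector $x$ with $Ax = \lambda\|A\|x$, I would set $\mathcal{X}_0 = \mathbb{K}x$, the one-dimensional (hence closed) subspace spanned by $x$. Then $A(\mathcal{X}_0) = \mathbb{K}(Ax) = \mathbb{K}(\lambda\|A\|x) \subseteq \mathcal{X}_0$, so $\mathcal{X}_0$ is an invariant subspace of dimension one, which is the desired conclusion.

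The argument is essentially mechanical once Theorem \ref{th.1} is in hand, so there is no real obstacle; the only point requiring a small comment is the degenerate case $A = 0$, for which the proof of Theorem \ref{th.1} (through Lemma \ref{lemma-A-T}) implicitly assumes $A \neq 0$. In that case, however, every one-dimensional subspace $\mathbb{K}x$ satisfies $A(\mathbb{K}x) = \{0\} \subseteq \mathbb{K}x$ and is therefore invariant, so the conclusion holds trivially and the theorem may be applied to the nontrivial case $A \neq 0$ without loss.
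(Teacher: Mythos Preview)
Your proposal is correct and follows essentially the same route as the paper: apply Theorem \ref{th.1} (with $\mathcal{Y}=\mathcal{X}$, $J=I$) to obtain an eigenvector for the eigenvalue $\lambda\|A\|$, and take the span of that eigenvector as the one-dimensional invariant subspace. Your treatment is in fact slightly more careful than the paper's, since you explicitly handle the degenerate case $A=0$ (where the paper's choice $\mathcal{X}_0=\operatorname{span}\{Ax_0\}$ would collapse to $\{0\}$).
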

\begin{proof}
Let $A\parallel I$. It follows from Theorem \ref{th.1} that there exist $\lambda\in\mathbb{T}$ and $x_0\in \mathcal{X}\setminus\{0\}$ such that $Ax_0 = \lambda\|A\|x_0$.
Set $\mathcal{X}_0: = \text{span}\{Ax_0\}$. It is easy to verify that $A(\mathcal{X}_0)\subseteq \mathcal{X}_0$ and $\dim \mathcal{X}_0 = 1$.
\end{proof}
\subsection{Norm--parallelism of nilpotent and projections}
$\quad$

In this section we investigate nilpotent and projections in the context of norm--parallelism in locally uniformly convex spaces.
First, we show that iterations of nilpotent are not norm--parallel.
\begin{theorem}\label{th-nilp}
Let $\mathcal{X}$ be a locally uniformly convex Banach space.
Let $A\in \mathbb{K}(\mathcal{X})$ such that $A^{m-1}\neq 0$, $A^m=0$ for some $m\in \mathbb{N}$.
Then $A^k\nparallel A^{j}$ for every $1\leq k< j\leq m$.
\end{theorem}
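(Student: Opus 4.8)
The plan is to argue by contradiction and to convert a hypothetical parallelism $A^k\parallel A^j$ into an eigenvector of $A^{j-k}$ attached to a \emph{nonzero} eigenvalue, which is impossible since every positive power of the nilpotent $A$ is itself nilpotent. First I would fix $1\le k<j\le m$ and record the nonvanishing needed to run the machinery: $A^k$ is compact (a power of the compact $A$) and $A^k\neq0$, for if $A^k=0$ then $A^{m-1}=A^{\,m-1-k}A^k=0$, contradicting $A^{m-1}\neq0$; in the substantive range $j\le m-1$ one likewise has $A^j\neq0$. Assuming $A^k\parallel A^j$, I place $A^k$ in the compact slot and $A^j$ in the bounded slot of Lemma~\ref{lemma-A-T}.

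That lemma then supplies $\lambda\in\mathbb{T}$, unit vectors $\{x_n\}\subset\mathbb{S}_\mathcal{X}$ and $y\in\mathbb{S}_\mathcal{X}$ with
$$\lim_{n\to\infty}\frac{A^k}{\|A^k\|}x_n = y, \qquad \lim_{n\to\infty}\lambda\frac{A^j}{\|A^j\|}x_n = y.$$
The key move is to push the first convergence through the continuous operator $A^{j-k}$: since $A^{j-k}A^k=A^j$, this gives $\dfrac{A^j}{\|A^k\|}x_n\to A^{j-k}y$. Rewriting the second convergence as $\dfrac{A^j}{\|A^k\|}x_n\to\dfrac{\|A^j\|}{\|A^k\|}\overline{\lambda}\,y$ and invoking uniqueness of limits, I obtain
$$A^{j-k}y = \mu y, \qquad \mu := \frac{\|A^j\|}{\|A^k\|}\,\overline{\lambda}.$$
Because $\|A^j\|>0$ and $|\lambda|=1$, the scalar $\mu$ is nonzero, and $\|y\|=1$, so $y$ is an eigenvector of $A^{j-k}$ for the nonzero eigenvalue $\mu$.

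The contradiction now comes from nilpotency. Iterating $A^{j-k}y=\mu y$ gives $(A^{j-k})^{m}y=\mu^{m}y$, while $(A^{j-k})^{m}=A^{m(j-k)}=0$ since $m(j-k)\ge m$; hence $\mu^{m}y=0$, and as $y\neq0$ this forces $\mu=0$, contradicting $\mu\neq0$. Therefore $A^k\nparallel A^j$. I expect the only real obstacle to be bookkeeping rather than any deep difficulty: namely invoking Lemma~\ref{lemma-A-T} with $A^k$ in the compact slot and correctly transporting the convergence $\frac{A^k}{\|A^k\|}x_n\to y$ through $A^{j-k}$ to recognize $y$ as an eigenvector. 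A minor point to flag is the degenerate case $j=m$, where $A^j=0$ and Lemma~\ref{lemma-A-T} does not apply; this is read under the convention that norm--parallelism is considered between nonzero operators, so that the genuine content of the statement is the range $1\le k<j\le m-1$.
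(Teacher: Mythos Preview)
Your proof is correct and follows essentially the same approach as the paper's: both argue by contradiction, apply Lemma~\ref{lemma-A-T} to obtain the limit vector $y$, push the first convergence through $A^{j-k}$ to recognize $y$ as an eigenvector of $A^{j-k}$ with nonzero eigenvalue, and then derive the contradiction from $A^{m(j-k)}=0$. Your treatment is in fact a bit more careful than the paper's in verifying the nonvanishing hypotheses $A^k\neq0$ and $A^j\neq0$ needed to invoke the lemma, and your remark about the degenerate case $j=m$ (where $A^j=0$ and the parallelism relation holds trivially under the paper's definition) correctly identifies a boundary case that the paper's proof also tacitly excludes.
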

\begin{proof}
Assume, contrary to
our claim, that $A^k\parallel A^{j}$ for some $1\leq k< j\leq m$.
Since $A$ is compact, $A^k, A^{j}$ are compact.
It follows from Lemma \ref{lemma-A-T} that,
there exist a sequence $\{x_{n}\}\subset \mathbb{S}_\mathcal{X}$, $y\in \mathbb{S}_\mathcal{X}$ and $\lambda\in \mathbb{T}$ such that
\begin{align}\label{id.2.2-n}
\lim_{n\rightarrow\infty} \frac{A^{k}}{\|A^{k}\|}x_n = y,\quad\qquad \lim_{n\rightarrow\infty} \lambda\frac{A^{j}}{\|A^{j}\|}x_n = y.
\end{align}
Due to $A^{j-k}$ is continuous, we get by (\ref{id.2.2-n})
$\lim_{n\rightarrow\infty} A^{j-k}\left(\frac{A^{k}}{\|A^{k}\|}x_n\right) = A^{j-k}y$. Hence
\begin{align}\label{id.2.3-n}
\lim_{n\rightarrow\infty}\left(\frac{A^{j}}{\|A^{k}\|}x_n\right) = A^{j-k}y.
\end{align}
Now the equality (\ref{id.2.3-n}) becomes
\begin{align}\label{nilp-ada}
\lim_{n\rightarrow\infty}\left(\frac{A^{j}}{\|A^{j}\|}x_n\right) = \frac{\|A^k\|}{\|A^j\|}A^{j-k}y.
\end{align}
By (\ref{id.2.2-n}) and (\ref{nilp-ada}) we reach that $\frac{\|A^k\|}{\|A^j\|}A^{j-k}y = \overline{\lambda}y$.
We obtain $A^{j-k}y=\alpha y$ with $\alpha:= \overline{\lambda}\frac{\|A^j\|}{\|A^k\|}$.
Therefore, $A^{m(j-k)}y=\alpha^m y\neq 0$, while $A^{m(j-k)}=0$, and we obtain a contradiction.
\end{proof}
Now, we investigate whether projections may be norm--parallel.
\begin{theorem}\label{th-proj}
Let $\mathcal{X}$ be a locally uniformly convex Banach space.
Let $A, B\in \mathbb{B}(\mathcal{X})$ be operators such that $A^2=A$ and $B^2=B$. Moreover, suppose that $\dim A(\mathcal{X})<\infty$.
If $A\parallel B$, then $A(\mathcal{X})\cap B(\mathcal{X})$ is a nontrivial subspace.
\end{theorem}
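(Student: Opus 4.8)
The plan is to apply Lemma~\ref{lemma-A-T} and then convert the two limit relations it provides into exact membership statements, using that both ranges are closed subspaces. First note that we may assume $A\neq 0\neq B$, since otherwise one of the ranges is $\{0\}$ while the parallelism hypothesis holds vacuously, and the asserted conclusion would fail. Because $\dim A(\mathcal{X})<\infty$, the idempotent $A$ is a finite-rank operator and hence $A\in\mathbb{K}(\mathcal{X})$; meanwhile $B\in\mathbb{B}(\mathcal{X})$ and $\mathcal{X}$ is locally uniformly convex. Thus, taking $\mathcal{Y}=\mathcal{X}$, the hypotheses of Lemma~\ref{lemma-A-T} are met with the compact operator $A$ in the role of ``$A$'', and since norm--parallelism is symmetric the relation $A\parallel B$ is exactly what the lemma requires.

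Lemma~\ref{lemma-A-T} then yields $\lambda\in\mathbb{T}$, a sequence $\{x_n\}\subset\mathbb{S}_\mathcal{X}$, and a vector $y\in\mathbb{S}_\mathcal{X}$ with
$$\lim_{n\to\infty}\frac{A}{\|A\|}x_n=y,\qquad \lim_{n\to\infty}\lambda\frac{B}{\|B\|}x_n=y.$$
In particular $\|y\|=1$, so $y\neq 0$. Next I would observe that each term $\frac{1}{\|A\|}Ax_n$ lies in the linear subspace $A(\mathcal{X})$, which is closed because it is finite-dimensional; passing to the limit in the first relation gives $y\in A(\mathcal{X})$.

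For the second relation, the key point is that the range of a bounded idempotent is closed: since $B^2=B$ one has $B(\mathcal{X})=\ker(I-B)$, which is closed as $I-B$ is continuous. Each term $\frac{\lambda}{\|B\|}Bx_n$ belongs to the subspace $B(\mathcal{X})$, so the second limit forces $y\in B(\mathcal{X})$. Combining the two memberships gives $y\in A(\mathcal{X})\cap B(\mathcal{X})$ with $y\neq 0$, proving that the intersection is nontrivial. I expect the only genuinely substantive step to be this conversion of the approximate (limit) conclusions of Lemma~\ref{lemma-A-T} into exact range membership; it hinges precisely on the closedness of the two ranges, which is automatic for $A(\mathcal{X})$ by finite-dimensionality and holds for $B(\mathcal{X})$ because the range of a bounded projection coincides with the closed fixed-point set $\ker(I-B)$.
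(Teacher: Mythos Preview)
Your proof is correct and follows essentially the same approach as the paper: both apply Lemma~\ref{lemma-A-T} to obtain a unit vector $y$ approximated simultaneously from $A(\mathcal{X})$ and $B(\mathcal{X})$, and then argue that $y$ lies in both ranges. The only cosmetic difference is in that last step: the paper applies $A$ and $B$ to the two limits and uses $A^2=A$, $B^2=B$ to deduce $Ay=y=By$, whereas you invoke directly that $A(\mathcal{X})$ (finite-dimensional) and $B(\mathcal{X})=\ker(I-B)$ are closed, hence contain their limit points.
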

\begin{proof}
Since $\dim A(\mathcal{X})<\infty$, $A\in \mathbb{K}(\mathcal{X})$.
It follows from Lemma \ref{lemma-A-T} that,
there exist a sequence $\lambda\in \mathbb{T}$, $\{x_{n}\}\subset \mathbb{S}_\mathcal{X}$ and $y\in \mathbb{S}_\mathcal{X}$ such that
\begin{align}\label{id.2.2-p}
\lim_{k\rightarrow\infty} \frac{A}{\|A\|}x_n = y, \qquad\quad \lim_{k\rightarrow\infty} \lambda\frac{B}{\|B\|}x_n = y.
\end{align}
The operators $A, B$ are continuous. Thus we get by (\ref{id.2.2-p})
\begin{align}\label{id.2.3-p}
\lim_{k\rightarrow\infty} \frac{A^2}{\|A\|}x_n = Ay, \qquad\quad \lim_{k\rightarrow\infty} \lambda\frac{B^2}{\|B\|}x_n = By.
\end{align}
Since $A^2=A$, $B^2=B$, it follows from (\ref{id.2.3-p}) that
\begin{align}\label{id.2.4-p}
\lim_{k\rightarrow\infty} \frac{A}{\|A\|}x_n = Ay, \qquad\quad \lim_{k\rightarrow\infty} \lambda\frac{B}{\|B\|}x_n = By.
\end{align}
Combining (\ref{id.2.2-p}) and (\ref{id.2.4-p})
we get $Ay= y = By$. So, it yields $y\in A(\mathcal{X})\cap B(\mathcal{X})$,
which means that $\mbox{span}\{y\}\subset A(\mathcal{X})\cap B(\mathcal{X})$.
\end{proof}
\begin{corollary}\label{cr-proj-2}
Let $\mathcal{X}$ be a locally uniformly convex Banach space.
Let $A, B\in \mathbb{B}(\mathcal{X})$ be projections such that $\|A\|=1$ and $\|B\|=1$. Moreover, suppose that $\dim A(\mathcal{X})<\infty$.
Then the following conditions are equivalent:
\begin{itemize}
\item[(i)] $A\parallel B$.
\item[(ii)] $A(\mathcal{X})\cap B(\mathcal{X})$ is a nontrivial subspace.
\end{itemize}
\end{corollary}
\begin{proof}
The implication (i)$\Rightarrow$(ii) holds by Theorem \ref{th-proj}.
We prove (ii)$\Rightarrow$(i). Fix $x\in A(\mathcal{X})\cap B(\mathcal{X})\cap \mathbb{S}_\mathcal{X}$. It follows that $Ax=x$ and $Bx=x$. Thus
$$\|A\|+\|B\|= 2 = \|x + x\| = \|Ax + Bx\|\leq\|A + B\|\leq\|A\| + \|B\|,$$
which completes the proof of this theorem.
\end{proof}
\subsection{Norm--parallelism of compact operators}
$\quad$

In this section, we give some equivalence assertions about the norm--parallelism
of compact operators.
Let $0\leq \varepsilon <1$. We say that a mapping $U\,:\mathcal{X}\longrightarrow \mathcal{Y}$ is an $\varepsilon$-isometry if
$$(1 - \varepsilon)\|x\|\leq \|Ux\| \leq(1 + \varepsilon)\|x\|\qquad (x\in \mathcal{X}).$$
\begin{theorem}
Let $\mathcal{X}$ be a normed space and let $A, B\in \mathbb{B}(\mathcal{X})$. Suppose that for every $\varepsilon > 0$ there exist a
normed space $\mathcal{Y}$ and a surjective $\varepsilon$-isometry $U\,:\mathcal{X}\longrightarrow \mathcal{Y}$ such that
$UAU^{-1}\parallel UBU^{-1}$ in $\mathbb{B}(\mathcal{Y})$. Then $A\parallel B$.
\end{theorem}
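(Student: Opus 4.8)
The plan is to transfer the parallelism relation from $\mathcal{Y}$ back to $\mathcal{X}$ by controlling how conjugation by a surjective $\varepsilon$-isometry distorts operator norms, and then to let $\varepsilon\to0$. First I would record the elementary consequences of the definition: since $(1-\varepsilon)\|x\|\leq\|Ux\|\leq(1+\varepsilon)\|x\|$ with $\varepsilon<1$, the map $U$ is bounded below, hence injective, and together with surjectivity it is a bijection with $\|U^{-1}\|\leq(1-\varepsilon)^{-1}$. Thus $UCU^{-1}$ is a genuine element of $\mathbb{B}(\mathcal{Y})$ for every $C\in\mathbb{B}(\mathcal{X})$, and the operator norm $\|UCU^{-1}\|=\sup_{x\neq0}\frac{\|UCx\|}{\|Ux\|}$ makes sense.

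The key step is a two-sided comparison. Applying the $\varepsilon$-isometry inequalities to both $Cx$ in the numerator and $x$ in the denominator, one gets for every nonzero $x$ the bound $\frac{1-\varepsilon}{1+\varepsilon}\frac{\|Cx\|}{\|x\|}\leq\frac{\|UCx\|}{\|Ux\|}\leq\frac{1+\varepsilon}{1-\varepsilon}\frac{\|Cx\|}{\|x\|}$, and taking the supremum over $x$ yields
$$\frac{1-\varepsilon}{1+\varepsilon}\,\|C\|\leq\|UCU^{-1}\|\leq\frac{1+\varepsilon}{1-\varepsilon}\,\|C\|$$
for every $C\in\mathbb{B}(\mathcal{X})$. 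I would then feed the parallelism hypothesis into this estimate: for each $\varepsilon>0$ there is $\lambda_\varepsilon\in\mathbb{T}$ with $\|U(A+\lambda_\varepsilon B)U^{-1}\|=\|UAU^{-1}\|+\|UBU^{-1}\|$, where I use $UAU^{-1}+\lambda_\varepsilon UBU^{-1}=U(A+\lambda_\varepsilon B)U^{-1}$. Bounding the left-hand side above and each summand on the right below by the displayed inequality gives $\frac{1-\varepsilon}{1+\varepsilon}\big(\|A\|+\|B\|\big)\leq\frac{1+\varepsilon}{1-\varepsilon}\|A+\lambda_\varepsilon B\|$, hence
$$\Big(\tfrac{1-\varepsilon}{1+\varepsilon}\Big)^{2}\big(\|A\|+\|B\|\big)\leq\|A+\lambda_\varepsilon B\|\leq\|A\|+\|B\|,$$
the upper bound being the triangle inequality together with $|\lambda_\varepsilon|=1$.

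Finally I would pass to the limit. Choosing a sequence $\varepsilon_n\to0$, compactness of $\mathbb{T}$ yields a subsequence with $\lambda_{\varepsilon_{n_k}}\to\lambda\in\mathbb{T}$. Since $\big(\tfrac{1-\varepsilon}{1+\varepsilon}\big)^{2}\to1$, the squeeze above forces $\|A+\lambda_{\varepsilon_{n_k}}B\|\to\|A\|+\|B\|$, while the Lipschitz continuity of $\mu\mapsto\|A+\mu B\|$ (with constant $\|B\|$) gives $\|A+\lambda_{\varepsilon_{n_k}}B\|\to\|A+\lambda B\|$. Comparing the two limits produces $\|A+\lambda B\|=\|A\|+\|B\|$ with $\lambda\in\mathbb{T}$, that is, $A\parallel B$. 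The only real content is the norm-comparison estimate of the second paragraph, which is just careful bookkeeping with the $\varepsilon$-isometry inequalities; the selection of a single limiting $\lambda$ is handled by the compactness of $\mathbb{T}$, so I expect no serious obstacle beyond keeping the two inequality directions straight.
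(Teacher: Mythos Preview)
Your argument is correct and follows essentially the same route as the paper: derive the two-sided norm comparison $\frac{1-\varepsilon}{1+\varepsilon}\|C\|\leq\|UCU^{-1}\|\leq\frac{1+\varepsilon}{1-\varepsilon}\|C\|$, feed in the parallelism identity on $\mathcal{Y}$, and squeeze. Your version is in fact slightly more careful than the paper's in one respect: the paper simply writes ``letting $\varepsilon\to0^+$ we obtain $\|A+\lambda B\|=\|A\|+\|B\|$'' without acknowledging that $\lambda$ depends on $\varepsilon$, whereas you explicitly pass to a convergent subsequence $\lambda_{\varepsilon_{n_k}}\to\lambda$ using compactness of $\mathbb{T}$ and the continuity of $\mu\mapsto\|A+\mu B\|$ to close the argument.
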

\begin{proof}
Fix $\varepsilon > 0$. By assumption, there exist a
normed space $\mathcal{Y}$ and a surjective $\varepsilon$-isometry $U\,:\mathcal{X}\longrightarrow \mathcal{Y}$ such that
$UAU^{-1}\parallel UBU^{-1}$. Hence there exists $\lambda\in\mathbb{T}$ such that
\begin{align}\label{id.8.3001}
\big\|UAU^{-1} + \lambda UBU^{-1}\big\| = \|UAU^{-1}\| + \|UBU^{-1}\|.
\end{align}
For every $C\in \mathbb{B}(\mathcal{X})$ we have
\begin{align*}
\|UCU^{-1}y\| \leq \|UC\|\,\|U^{-1}y\|\leq \|U\|\,\|C\|\,\frac{\|y\|}{1 - \varepsilon} \leq \frac{1 + \varepsilon}{1 - \varepsilon}\|C\|\,\|y\| \quad (y\in \mathcal{Y})
\end{align*}
which implies that
\begin{align}\label{id.8.3}
\|UCU^{-1}\| \leq \frac{1 + \varepsilon}{1 - \varepsilon}\|C\|.
\end{align}
On the other hand, we have
\begin{align*}
\frac{1 - \varepsilon}{1 + \varepsilon}\|Cx\| &= \frac{1 - \varepsilon}{1 + \varepsilon}\|U^{-1}\big(UCU^{-1}\big)Ux\|
\\& \leq \frac{1 - \varepsilon}{1 + \varepsilon}\|U^{-1}\big(UCU^{-1}\big)\|\,\|Ux\|
\\&\leq \frac{1 - \varepsilon}{1 + \varepsilon}\|U^{-1}\|\,\|UCU^{-1}\|(1 + \varepsilon)\|x\|
\\& \leq \frac{1 - \varepsilon}{1 + \varepsilon}\times\frac{1}{1 - \varepsilon}\|UCU^{-1}\|(1 + \varepsilon)\|x\| = \|UCU^{-1}\|\,\|x\| \quad (x\in \mathcal{X}),
\end{align*}
whence
\begin{align}\label{id.8.4}
\frac{1 - \varepsilon}{1 + \varepsilon}\|C\| \leq \|UCU^{-1}\|.
\end{align}
From (\ref{id.8.3}) and (\ref{id.8.4}) we therefore get
\begin{align}\label{id.8.5}
\frac{1 - \varepsilon}{1 + \varepsilon}\|C\| \leq \|UCU^{-1}\|\leq \frac{1 + \varepsilon}{1 - \varepsilon}\|C\|.
\end{align}
We have
\begin{align*}
\|A\| + \|B\|&\geq \|A + \lambda B\|
\\& \geq \frac{1 - \varepsilon}{1 + \varepsilon}\|U(A + \lambda B)U^{-1}\| \hspace{3cm}\big(\mbox{by}\,(\ref{id.8.5})\, \mbox{for}\, C = A + \lambda B \big)
\\& = \frac{1 - \varepsilon}{1 + \varepsilon}\big(\|UAU^{-1}\| + \|UBU^{-1}\|\big) \hspace{4.7cm}\big(\mbox{by}\,(\ref{id.8.3001})\big)
\\& \geq \frac{1 - \varepsilon}{1 + \varepsilon}\left(\frac{1 - \varepsilon}{1 + \varepsilon}\|A\| + \frac{1 - \varepsilon}{1 + \varepsilon}\|B\|\right) \hspace{2cm}\big(\mbox{by}\,(\ref{id.8.5}) \big)
\\& = \left(\frac{1 - \varepsilon}{1 + \varepsilon}\right)^2(\|A\| + \|B\|).
\end{align*}
Thus
\begin{align}\label{id.8.5006}
\|A\| + \|B\|\geq \|A + \lambda B\| \geq \left(\frac{1 - \varepsilon}{1 + \varepsilon}\right)^2(\|A\| + \|B\|).
\end{align}
Letting $\varepsilon \rightarrow 0^{+}$ in (\ref{id.8.5006}), we obtain $\|A + \lambda B\| = \|A\| + \|B\|$, hence $A\parallel B$.
\end{proof}
\begin{proposition}\label{pr 3.147}
Let $\mathcal{X}, \mathcal{Y}$ be normed space, $A\in\mathbb{B}(\mathcal{X}, \mathcal{Y})$ and $x, y\in \mathbb{M}_A$. If $Ax\parallel Ay$, then
$x\parallel y$.
\end{proposition}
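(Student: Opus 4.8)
The plan is to use the fact that both $x$ and $y$ attain the norm of $A$, so that applying $A$ to the parallelism-witnessing combination $x+\lambda y$ is already as large as it can possibly be; a squeeze between submultiplicativity of the operator norm and the triangle inequality then forces $\|x+\lambda y\|$ to its maximal value. Throughout I take $A\neq0$ and set $M:=\|A\|>0$ (when $A=0$ every unit vector lies in $\mathbb{M}_A$ while $x\parallel y$ may fail, so the statement is understood for nonzero $A$). Since $x,y\in\mathbb{M}_A$, we have $\|x\|=\|y\|=1$ and $\|Ax\|=\|Ay\|=M$.

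First I would unwind the hypothesis $Ax\parallel Ay$: by \eqref{defiparallel} there is some $\lambda\in\mathbb{T}$ with
$$\|Ax+\lambda Ay\|=\|Ax\|+\|Ay\|=2M.$$
The key step is to observe that, by linearity of $A$, the left-hand side equals $\|A(x+\lambda y)\|$, and then to compare it against the operator norm. Submultiplicativity gives
$$2M=\|A(x+\lambda y)\|\le\|A\|\,\|x+\lambda y\|=M\,\|x+\lambda y\|,$$
and dividing by $M>0$ yields $\|x+\lambda y\|\ge2$. On the other hand, since $|\lambda|=1$, the triangle inequality gives $\|x+\lambda y\|\le\|x\|+\|y\|=2$. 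Hence $\|x+\lambda y\|=2=\|x\|+\|y\|$, which is precisely $x\parallel y$ --- witnessed by the very same $\lambda$.

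I expect no genuine obstacle: the whole argument is the two-sided estimate $2M\le M\|x+\lambda y\|\le2M$, squeezing $\|x+\lambda y\|$ to exactly $2$. The only point that needs recording is the degenerate case $A=0$, in which $M$ cannot be cancelled; this is why the proposition is read for operators of positive norm. It is worth remarking that the proof transfers the same unimodular scalar $\lambda$ from $Ax\parallel Ay$ to $x\parallel y$, which is marginally stronger than merely asserting the existence of some (possibly different) witness for $x\parallel y$.
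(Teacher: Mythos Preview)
Your proof is correct and follows essentially the same approach as the paper: both exploit the operator-norm inequality $\|A(x+\lambda_0 y)\|\le\|A\|\,\|x+\lambda_0 y\|$ together with $\|A(x+\lambda_0 y)\|=2\|A\|$ to force $\|x+\lambda_0 y\|=2$. The paper phrases this as a contradiction (assuming $\|x+\lambda_0 y\|<2$ and deriving $\|A\|>\|A\|$), whereas you argue directly via a squeeze, but the mathematical content is identical; your explicit remark that $A\neq0$ is tacitly assumed is a point the paper leaves implicit.
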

\begin{proof}
Let $Ax\parallel Ay$. Hence there exists $\lambda_0\in\mathbb{T}$ such that
$\|Ax + \lambda_0 Ay\| = \|Ax\| + \|Ay\|$. Since $x, y\in \mathbb{M}_A$, we obtain $\|Ax + \lambda_0 Ay\| = 2\|A\|$.
Now, let $x\nparallel y$. Then $0< \|x + \lambda y\|< \|x\| + \|y\| = 2$ for all $\lambda\in\mathbb{T}$. In particular, $0< \|x + \lambda_0 y\|<2$.
So, we have
\begin{align*}
\|A\| &\geq \left\|A\left(\frac{x + \lambda_0 y}{\|x + \lambda_0 y\|}\right)\right\|
\\& = \frac{1}{\|x + \lambda_0 y\|}\|Ax + \lambda_0 Ay\|
\\& = \frac{2\|A\|}{\|x + \lambda_0 y\|} > \frac{2\|A\|}{2} = \|A\|,
\end{align*}
which is a contradiction. Thus $x\parallel y$.
\end{proof}
In the sequel, we show that the converse of Proposition \ref{pr 3.147} is also true if both $\mathcal{X}, \mathcal{Y}$ are real smooth
Banach spaces. To this end, let us quote a result from \cite{D. Sai}.
\begin{lemma}\cite[Theorem 3.1]{D. Sai}\label{lem 684}
Let $\mathcal{X}$ be a real smooth Banach space, $A\in\mathbb{B}(\mathcal{X})$ and $x\in \mathbb{M}_A$. Then
$$A\Big(\big\{z\in \mathcal{X}; \quad x\perp_{BJ} z\big\}\Big) \subseteq \big\{w\in \mathcal{X}; \quad Ax\perp_{BJ} w\big\}.$$
\end{lemma}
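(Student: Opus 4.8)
The plan is to prove the inclusion through the dual, support-functional characterization of Birkhoff--James orthogonality, using crucially that smoothness of $\mathcal{X}$ makes the supporting functional at each nonzero point unique. Recall that in a smooth space one has $u\perp_{BJ}v$ exactly when $f(v)=0$, where $f$ is the unique norm-one functional with $f(u)=\|u\|$. First I would dispose of the trivial case $A=0$; so assume $A\neq 0$, and note that $x\in\mathbb{M}_{A}$ gives $\|Ax\|=\|A\|>0$, so $Ax\neq 0$ and its supporting functional is well defined.

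Fix any $z\in\mathcal{X}$ with $x\perp_{BJ}z$, and let $g\in\mathcal{X}^{*}$ be the unique support functional at $Ax$, so $\|g\|=1$ and $g(Ax)=\|Ax\|=\|A\|$. The central construction is the pulled-back functional $h:=g\circ A\in\mathcal{X}^{*}$. On the one hand $h(x)=g(Ax)=\|A\|$, while on the other $\|h\|\le\|g\|\,\|A\|=\|A\|$. Together with $\|x\|=1$ this forces $\|h\|=\|A\|$ and shows that $h/\|A\|$ is a norm-one functional attaining its value at $x$, that is, a support functional at $x$.

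Here smoothness does the decisive work: since the support functional at $x$ is unique, $h/\|A\|$ must coincide with it, so the hypothesis $x\perp_{BJ}z$ yields $(h/\|A\|)(z)=0$, hence $h(z)=0$. Unwinding $h=g\circ A$ gives $g(Az)=0$, and since $g$ is a norm-one functional with $g(Ax)=\|Ax\|$, this is precisely the support-functional criterion for $Ax\perp_{BJ}Az$. As $z$ was an arbitrary element orthogonal to $x$, the asserted inclusion follows.

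I expect the uniqueness step to be the main obstacle, or rather the point where every hypothesis is consumed: the entire argument hinges on smoothness to identify the pulled-back functional $g\circ A$ with the supporting functional at $x$, and on $x\in\mathbb{M}_{A}$ to ensure that $g\circ A$ norms at $x$ in the first place. Without smoothness $g\circ A$ would be merely \emph{some} support functional at $x$, which need not annihilate $z$, and the conclusion could genuinely fail.
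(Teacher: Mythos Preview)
Your argument is correct. The paper does not provide its own proof of this lemma; it is quoted verbatim from \cite{D. Sai} and used as a black box. Your proof via the support-functional characterization of Birkhoff--James orthogonality --- pulling back the unique supporting functional $g$ at $Ax$ through $A$ and using that $x\in\mathbb{M}_A$ forces $g\circ A$ to attain its norm at $x$, then invoking smoothness to identify $g\circ A/\|A\|$ with the unique support functional at $x$ --- is precisely the natural one, and is essentially the argument given in the source paper of Sain. There is nothing to compare against within the present paper.
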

We are now in a position to establish one of our main results.
\begin{theorem}\label{th 39612}
Let $\mathcal{X}$ be a real smooth Banach space, $A\in\mathbb{B}(\mathcal{X})$ with $\|A\| = 1$ and $x, y\in \mathbb{M}_A$.
Then the following statements are equivalent:
\begin{itemize}
\item[(i)] $x\parallel y$.
\item[(ii)] $Ax\parallel Ay$.
\end{itemize}
\end{theorem}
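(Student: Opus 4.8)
The plan is to prove the two implications separately, with essentially all of the content residing in (i)$\Rightarrow$(ii). The reverse implication (ii)$\Rightarrow$(i) requires no new argument at all: it is exactly Proposition \ref{pr 3.147} specialized to $\mathcal{Y} = \mathcal{X}$, since $x, y \in \mathbb{M}_A$ and $Ax \parallel Ay$ yield $x \parallel y$ directly.

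For (i)$\Rightarrow$(ii), the idea is to translate parallelism into Birkhoff--James orthogonality via the characterization \eqref{rel.N-P.BJ}, transport that orthogonality through $A$ using Lemma \ref{lem 684}, and then translate back. Because $\mathcal{X}$ is real, $\mathbb{T} = \{-1, 1\}$, and since $x, y \in \mathbb{M}_A$ forces $\|x\| = \|y\| = 1$, relation \eqref{rel.N-P.BJ} reduces $x \parallel y$ to the assertion that $x \perp_{BJ} (x + \lambda y)$ for some $\lambda \in \{-1, 1\}$. First I would fix such a $\lambda$ and set $z = x + \lambda y$, so that $x \perp_{BJ} z$.

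The key step is then to invoke Lemma \ref{lem 684}: since $x \in \mathbb{M}_A$ and $x \perp_{BJ} z$, smoothness of $\mathcal{X}$ gives $Ax \perp_{BJ} Az$, that is, $Ax \perp_{BJ} (Ax + \lambda Ay)$. To finish, I would read this back through \eqref{rel.N-P.BJ} in the opposite direction: because $x, y \in \mathbb{M}_A$ and $\|A\| = 1$, we have $\|Ax\| = \|Ay\| = 1$, so the relation $Ax \perp_{BJ} (Ax + \lambda Ay)$ with $\lambda \in \{-1, 1\} = \mathbb{T}$ is precisely the condition that delivers $Ax \parallel Ay$. This closes the argument.

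The main obstacle is the middle step, where the hypothesis that $\mathcal{X}$ is real and smooth is genuinely used: Lemma \ref{lem 684} is exactly the device that lets a Birkhoff--James orthogonality relation survive application of the norming operator $A$, and without smoothness there is no reason for $Ax \perp_{BJ} Az$ to follow from $x \perp_{BJ} z$. The only other point demanding care is the bookkeeping of the normalizations; one must use $x, y \in \mathbb{M}_A$ together with $\|A\| = 1$ to guarantee that $Ax$ and $Ay$ are unit vectors, so that both applications of \eqref{rel.N-P.BJ} collapse to the clean form $x \perp_{BJ}(x + \lambda y)$ and $Ax \perp_{BJ}(Ax + \lambda Ay)$ needed at the two ends of the chain.
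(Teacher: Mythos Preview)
Your proposal is correct and follows essentially the same approach as the paper's own proof: both directions are handled identically, with (ii)$\Rightarrow$(i) deferred to Proposition~\ref{pr 3.147}, and (i)$\Rightarrow$(ii) obtained by passing from $x\parallel y$ to $x\perp_{BJ}(x+\lambda y)$ via \eqref{rel.N-P.BJ}, transporting this through $A$ with Lemma~\ref{lem 684}, and then reading \eqref{rel.N-P.BJ} back using $\|Ax\|=\|Ay\|=1$.
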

\begin{proof}
By Proposition \ref{pr 3.147}, (ii) implies (i).

(i)$\Rightarrow$(ii)
Let $x\parallel y$. By (\ref{rel.N-P.BJ}), there exists $\lambda\in\mathbb{T}$ such that $x\perp_{BJ}\big(\|y\|x + \lambda \|x\|y\big)$.
From $x, y\in \mathbb{S}_\mathcal{X}$ we deduce that $x\perp_{BJ}\big(x + \lambda y\big)$. Thus $$A(x + \lambda y) \in A(\{z\in \mathcal{X}: x\perp_{BJ} z\}).$$ From Lemma \ref{lem 684} we therefore conclude that $Ax\perp_{BJ} A(x + \lambda y)$. Thus $Ax\perp_{BJ} \|Ay\|Ax + \lambda \|Ax\|Ay$, since $\|Ax\| = \|Ay\| = 1$.
Again applying (\ref{rel.N-P.BJ}), we obtain $Ax\parallel Ay$.
\end{proof}
\begin{remark}
Notice that the smoothness condition of Banach space in the above theorem is essential. For example,
let us consider the space $\mathbb{R}^2$ with the max--norm.
Consider the norm one operator $A\in \mathbb{B}(\mathbb{R}^2)$ defined by $A(x, y) = \frac{1}{2}(x + y, x - y)$.
It is easily seen that $(1, -1), (-1, -1)\in \mathbb{M}_A$ and
$(1, -1)\parallel (-1, -1)$. However, we have $A(1, -1) = (0, 1)\nparallel (-1, 0)=A(-1, -1)$.
\end{remark}
Because of any compact operator on a reflexive Banach space must attain its norm, we have the following result as a consequence of Theorem \ref{th 39612}.
\begin{corollary}
Let $\mathcal{X}$ be a real reflexive smooth Banach space and $A\in\mathbb{K}(\mathcal{X})$ with $\|A\| = 1$. Then there exists
a unit vector $x\in \mathcal{X}$ such that for any $y\in \mathbb{M}_A$, the following statements are equivalent:
\begin{itemize}
\item[(i)] $x\parallel y$.
\item[(ii)] $Ax\parallel Ay$.
\end{itemize}
\end{corollary}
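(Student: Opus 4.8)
The plan is to reduce this corollary to Theorem~\ref{th 39612} by producing a single unit vector $x$ that lies in $\mathbb{M}_A$, so that for any other $y\in\mathbb{M}_A$ the hypotheses of the theorem are satisfied and the equivalence follows immediately. The only real content beyond Theorem~\ref{th 39612} is the existence of such an $x$, which is exactly the statement that $A$ attains its norm; everything else is inherited verbatim from the theorem.

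First I would recall that $\mathcal{X}$ is reflexive and $A\in\mathbb{K}(\mathcal{X})$ with $\|A\|=1$. Because $A$ is compact and $\mathcal{X}$ is reflexive, $A$ attains its norm: there exists a unit vector $x\in\mathcal{X}$ with $\|Ax\|=\|A\|=1$, i.e.\ $x\in\mathbb{M}_A$. The standard argument I would invoke is that the closed unit ball $\mathbb{B}_\mathcal{X}$ is weakly compact (by reflexivity), one can pick a sequence $\{x_n\}\subset\mathbb{S}_\mathcal{X}$ with $\|Ax_n\|\to\|A\|$, pass to a weakly convergent subsequence $x_{n_k}\rightharpoonup x$ with $x\in\mathbb{B}_\mathcal{X}$, and then use compactness of $A$ to upgrade weak convergence of $x_{n_k}$ to norm convergence of $Ax_{n_k}\to Ax$, so that $\|Ax\|=\|A\|=1$; in particular $x\neq 0$ and after normalizing we may take $x\in\mathbb{S}_\mathcal{X}$.

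Having fixed this $x\in\mathbb{M}_A$, I would then simply apply Theorem~\ref{th 39612}. Since $\mathcal{X}$ is a real smooth Banach space, $\|A\|=1$, and both $x\in\mathbb{M}_A$ and the given $y\in\mathbb{M}_A$, the theorem yields the equivalence
\begin{equation}
x\parallel y \Longleftrightarrow Ax\parallel Ay
\end{equation}
for every $y\in\mathbb{M}_A$. This is precisely the asserted conclusion, with the vector $x$ chosen once and for all at the outset.

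The only step requiring any care is the norm-attainment of $A$, and I would expect that to be the ``main obstacle'' only in the bookkeeping sense — it is entirely standard for a compact operator on a reflexive space, relying on the interplay between weak compactness of the unit ball and the fact that a compact operator maps weakly convergent sequences to norm-convergent ones. Once $x\in\mathbb{M}_A$ is secured, no further work is needed, since Theorem~\ref{th 39612} does all of the geometric heavy lifting via the relation~\eqref{rel.N-P.BJ} and Lemma~\ref{lem 684}.
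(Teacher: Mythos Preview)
Your proposal is correct and follows exactly the paper's own argument: the paper simply remarks that a compact operator on a reflexive Banach space attains its norm, producing some $x\in\mathbb{M}_A$, and then invokes Theorem~\ref{th 39612}. Your write-up adds the standard weak-compactness justification for norm attainment, but the logical structure is identical.
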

Next, let $A, B\in \mathbb{B}(\mathcal{X}, \mathcal{Y})$. If $x\in \mathbb{M}_A\cap \mathbb{M}_B$ and $Ax\parallel Bx$, then there exists $\lambda\in\mathbb{T}$ such that
$\|Ax + \lambda Bx\| = \|Ax\| + \|Bx\|.$
Hence
$$\|A\| + \|B\| = \|Ax\| + \|Bx\| = \|Ax + \lambda Bx\| \leq \|A + \lambda B\| \leq \|A\| + \|B\|.$$
Thus $\|A + \lambda B\| = \|A\| + \|B\|$, and hence $A\parallel B$.
There are examples in which $A\parallel B$ but not $Ax\parallel Bx$ for any
$x\in \mathbb{M}_A\cap \mathbb{M}_B$ (see \cite[Example 2.17]{Z.M.2}).

In a Hilbert space $\mathcal{H}$ and for $A, B\in\mathbb{B}(\mathcal{H})$, we \cite[Crolloraly 4.2]{Z.M.1} proved that
$A\parallel B$ if and only if there exists a sequence of unit vectors $\{\xi_n\}$ in $\mathcal{H}$ such that
\begin{align*}
\lim_{n\rightarrow\infty} \big|\langle A\xi_n, B\xi_n\rangle\big| = \|A\|\,\|B\|.
\end{align*}
It follows that if the Hilbert space $\mathcal{H}$ is finite dimensional, then $A\parallel B$ if and only if $Ax\parallel Bx$ for some $x\in \mathbb{M}_A\cap \mathbb{M}_B$.

In the case when $\mathcal{H}$ is infinite dimensional, there are examples showing that $A\parallel B$ but there is no $x\in \mathbb{M}_A\cap \mathbb{M}_B$ such that $Ax\parallel Bx$ (see \cite[Example 2.17]{Z.M.2}). This indicates that for such a result to be true in an infinite dimensional Hilbert space, we need to impose certain additional condition(s).
In \cite[Theorem 2.18]{Z.M.2} it is proved that for $A\in\mathbb{B}(\mathcal{H})$, if $\mathbb{S}_{\mathcal{H}_0} = \mathbb{M}_A$ where $\mathcal{H}_0$ is a finite dimensional
subspace of $\mathcal{H}$ and $\sup\{\|Az\| :\, z\in{\mathcal{H}_0}^\perp,\, \|z\| = 1\}< \|A\|$, then for any $B\in\mathbb{B}(\mathcal{H})$, $A\parallel B$
if and only if there exists a vector $x\in \mathbb{M}_A\cap \mathbb{M}_B$ such that $Ax\parallel Bx$.

Furthermore, for $A, B\in\mathbb{K}(\mathcal{H})$ it is proved in \cite[Theorem 2.10]{Z} that
$A\parallel B \Leftrightarrow Ax\parallel Bx$ for some $x\in \mathbb{M}_A\cap \mathbb{M}_B$.
Notice that the condition of compactness is essential (see \cite[Example 2.7]{Z}).

The following auxiliary result is needed in our next theorem.
\begin{lemma}\cite[Theorem 3.1]{w}\label{lemma.2.0199}
Let $\mathcal{X}, \mathcal{Y}$ be real reflexive Banach spaces, $\mathcal{Y}$ be smooth and strictly convex and $A, B\in \mathbb{K}(\mathcal{X}, \mathcal{Y})$. Let either $\mathbb{M}_A$ be connected or $\mathbb{M}_A = \{-u, +u\}$ for some unit vector $u\in \mathcal{X}$.
If $A\perp_{BJ} B$, then there exists a vector $x\in \mathbb{M}_A$ such that $Ax\perp_{BJ} Bx$.
\end{lemma}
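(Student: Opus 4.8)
The plan is to reduce the operator orthogonality $A\perp_{BJ}B$ to a \emph{pointwise} condition along $\mathbb{M}_A$ and then exploit the topological hypothesis on $\mathbb{M}_A$. Assume $A\neq 0$ and, after scaling, $\|A\|=1$; since $A$ is compact and $\mathcal{X}$ is reflexive, a standard weak-limit argument shows $\mathbb{M}_A\neq\emptyset$. Because $\mathcal{Y}$ is smooth, every nonzero $w\in\mathcal{Y}$ admits a \emph{unique} norm-one support functional $\varphi_w\in\mathcal{Y}^*$ with $\varphi_w(w)=\|w\|$, and for nonzero $u,v$ one has $u\perp_{BJ}v\Leftrightarrow\varphi_u(v)=0$. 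Hence, writing $\psi(x):=\varphi_{Ax}(Bx)$ for $x\in\mathbb{M}_A$ (note $Ax\neq 0$ there), the goal becomes to produce $x\in\mathbb{M}_A$ with $\psi(x)=0$.

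The key step is a directional-derivative formula. Set $\phi(t):=\|A+tB\|$; this is convex with $\phi(0)=1$, and $A\perp_{BJ}B$ is exactly the statement that $t=0$ is a global minimum, equivalently $\phi'_-(0)\le 0\le\phi'_+(0)$. I claim
$$\phi'_+(0)=\max_{x\in\mathbb{M}_A}\psi(x),\qquad \phi'_-(0)=\min_{x\in\mathbb{M}_A}\psi(x).$$
The lower bound $\phi'_+(0)\ge\psi(x)$ for each $x\in\mathbb{M}_A$ is immediate from $\|A+tB\|\ge\|Ax+tBx\|\ge\varphi_{Ax}(Ax+tBx)=1+t\psi(x)$ for $t\ge0$. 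For the reverse inequality pick $t_n\downarrow0$ and near-maximizing unit vectors $x_n$ with $\|(A+t_nB)x_n\|\ge\phi(t_n)-t_n^2$; reflexivity of $\mathcal{X}$ yields $x_n\rightharpoonup x_0$ (after passing to a subsequence), compactness of $A$ gives $Ax_n\to Ax_0$ in norm, and one checks $\|Ax_0\|=1=\|x_0\|$, so $x_0\in\mathbb{M}_A$. Letting $f_n:=\varphi_{Ax_n+t_nBx_n}$ and using $\|(A+t_nB)x_n\|=f_n(Ax_n)+t_nf_n(Bx_n)\le 1+t_nf_n(Bx_n)$ gives $\tfrac{\phi(t_n)-1}{t_n}\le f_n(Bx_n)+t_n$. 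Since $Ax_n+t_nBx_n\to Ax_0$ in norm, smoothness makes the duality map $w\mapsto\varphi_w$ norm-to-weak$^*$ continuous, so $f_n\to\varphi_{Ax_0}$ weak$^*$; and since $B$ is \emph{compact}, $Bx_n\to Bx_0$ in norm. Therefore $f_n(Bx_n)\to\varphi_{Ax_0}(Bx_0)=\psi(x_0)$, whence $\phi'_+(0)\le\psi(x_0)$. This proves the formula (and that the extremum is attained inside $\mathbb{M}_A$); the expression for $\phi'_-(0)$ follows by replacing $B$ with $-B$.

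It remains to locate a zero of $\psi$. First, $\psi$ is continuous on $\mathbb{M}_A$: if $x_n\to x$ in norm then $Ax_n\to Ax$ and $Bx_n\to Bx$ in norm, so norm-to-weak$^*$ continuity of $w\mapsto\varphi_w$ yields $\varphi_{Ax_n}(Bx_n)\to\varphi_{Ax}(Bx)$. Combining the displayed formula with $\phi'_-(0)\le0\le\phi'_+(0)$ gives $\min_{\mathbb{M}_A}\psi\le0\le\max_{\mathbb{M}_A}\psi$. If $\mathbb{M}_A$ is connected, the intermediate value theorem applied to the continuous real function $\psi$ produces $x\in\mathbb{M}_A$ with $\psi(x)=0$, i.e. $Ax\perp_{BJ}Bx$. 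If instead $\mathbb{M}_A=\{-u,+u\}$, then from $\varphi_{-w}=-\varphi_w$ we get $\psi(-u)=\varphi_{-Au}(-Bu)=\varphi_{Au}(Bu)=\psi(u)$, so $\min_{\mathbb{M}_A}\psi=\max_{\mathbb{M}_A}\psi=\psi(u)$ and the sandwich forces $\psi(u)=0$, i.e. $Au\perp_{BJ}Bu$. In either case we obtain the desired vector.

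The principal obstacle is the upper bound in the directional-derivative formula: one must guarantee that the limit of the near-maximizers lands in $\mathbb{M}_A$ and that the cross term $f_n(Bx_n)$ converges to $\psi(x_0)$. This is precisely where the full set of hypotheses is used---reflexivity of $\mathcal{X}$ to extract a weak limit, compactness of $A$ to upgrade it to $Ax_n\to Ax_0$, smoothness of $\mathcal{Y}$ (together with its strict convexity and reflexivity) for the unique support functionals and the regularity of the duality map, and, crucially, compactness of $B$ to turn the weak convergence $x_n\rightharpoonup x_0$ into $Bx_n\to Bx_0$ in norm. Once this analytic core is in place, the connectedness/antipodal dichotomy converts the sign condition into an honest zero via the intermediate value theorem.
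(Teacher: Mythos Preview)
The paper does not supply its own proof of this lemma; it is simply quoted as Theorem~3.1 from \cite{w}, so there is nothing in the present paper to compare your argument against. Your proof is correct and follows what is essentially the standard route to such results: express the one-sided derivatives of $t\mapsto\|A+tB\|$ at $0$ as the extrema of $\psi(x)=\varphi_{Ax}(Bx)$ over $\mathbb{M}_A$, then use connectedness (or the antipodal symmetry $\psi(-u)=\psi(u)$) together with the sign condition $\phi'_-(0)\le 0\le\phi'_+(0)$ to produce a zero of $\psi$. The delicate step---showing that the near-maximizers $x_n$ accumulate inside $\mathbb{M}_A$ and that $f_n(Bx_n)\to\psi(x_0)$---is handled correctly, using reflexivity of $\mathcal{X}$ for weak compactness of the unit ball, compactness of $A$ and $B$ to upgrade weak to norm convergence, and smoothness of $\mathcal{Y}$ for the uniqueness and norm-to-weak$^*$ continuity of the support functionals. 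As a side remark, your argument never invokes the strict convexity (or reflexivity) of $\mathcal{Y}$, so those hypotheses are not needed for the lemma as you prove it; in the paper the strict convexity of $\mathcal{Y}$ is used only downstream in Theorem~\ref{th.17.5} to conclude $\frac{Ax}{\|A\|}=\pm\frac{Bx}{\|B\|}$.
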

\begin{theorem}\label{th.17.5}
Let $\mathcal{X}, \mathcal{Y}$ be real reflexive Banach spaces, $\mathcal{Y}$ be smooth and strictly convex and $A, B\in \mathbb{K}(\mathcal{X}, \mathcal{Y})$. Let either $\mathbb{M}_A$ be connected or $\mathbb{M}_A = \{-u, +u\}$ for some unit vector $u\in \mathcal{X}$. Then the following statements are equivalent:
\begin{itemize}
\item[(i)] $A\parallel B$.
\item[(ii)] There exists a vector $x\in \mathbb{M}_A\cap \mathbb{M}_B$ such that $Ax\parallel Bx$.
\end{itemize}
In addition, if $x$ satisfying (ii), then $\frac{Ax}{\|A\|} = \pm\frac{Bx}{\|B\|}$.
\end{theorem}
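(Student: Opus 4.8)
The plan is to handle the two implications separately and to isolate the concluding proportionality, leaning on norm attainment rather than on the full machinery of Lemma \ref{lemma.2.0199}. The implication (ii)$\Rightarrow$(i) is the general fact recorded just before Lemma \ref{lemma.2.0199} and needs no geometric hypotheses: if $x\in\mathbb{M}_A\cap\mathbb{M}_B$ and $Ax\parallel Bx$, pick $\lambda\in\mathbb{T}$ with $\|Ax+\lambda Bx\|=\|Ax\|+\|Bx\|$; then $\|A\|+\|B\|=\|Ax\|+\|Bx\|=\|Ax+\lambda Bx\|\le\|A+\lambda B\|\le\|A\|+\|B\|$, so $A\parallel B$.

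For (i)$\Rightarrow$(ii) I would argue directly. Assume $A,B\neq0$ and take $\lambda\in\mathbb{T}$ with $\|A+\lambda B\|=\|A\|+\|B\|$. Since $A+\lambda B\in\mathbb{K}(\mathcal{X},\mathcal{Y})$ and $\mathcal{X}$ is reflexive, $A+\lambda B$ attains its norm at some $x\in\mathbb{S}_\mathcal{X}$, and then
\[
\|A\|+\|B\|=\|(A+\lambda B)x\|=\|Ax+\lambda Bx\|\le\|Ax\|+\|Bx\|\le\|A\|+\|B\|
\]
must be an equality line. Because $\|Ax\|\le\|A\|$ and $\|Bx\|\le\|B\|$, the middle equality forces $\|Ax\|=\|A\|$ and $\|Bx\|=\|B\|$, i.e. $x\in\mathbb{M}_A\cap\mathbb{M}_B$, while the first inequality becoming an equality gives $\|Ax+\lambda Bx\|=\|Ax\|+\|Bx\|$, that is $Ax\parallel Bx$; this is exactly (ii). I expect the only real obstacle to be the norm-attainment step, which is precisely where reflexivity together with compactness is used; notably, neither the connectedness of $\mathbb{M}_A$ nor the smoothness of $\mathcal{Y}$ is needed for this direction. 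An alternative following the Birkhoff--James pattern would convert $A\parallel B$ via \eqref{rel.N-P.BJ} into $A\perp_{BJ}(\|B\|A+\lambda\|A\|B)$, apply Lemma \ref{lemma.2.0199} to get $x\in\mathbb{M}_A$ with $Ax\perp_{BJ}(\|B\|Ax+\lambda\|A\|Bx)$, and convert back; but closing that route still requires separately checking $x\in\mathbb{M}_B$, so the direct argument is cleaner.

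For the concluding proportionality, fix any $x$ satisfying (ii), so $\|Ax\|=\|A\|$, $\|Bx\|=\|B\|$, and $\|Ax+\mu Bx\|=\|Ax\|+\|Bx\|$ for some $\mu\in\mathbb{T}=\{-1,1\}$. Here strict convexity of $\mathcal{Y}$ is the decisive hypothesis: equality in the triangle inequality for the nonzero vectors $Ax$ and $\mu Bx$ forces $\frac{Ax}{\|Ax\|}=\mu\frac{Bx}{\|Bx\|}$, that is $\frac{Ax}{\|A\|}=\mu\frac{Bx}{\|B\|}=\pm\frac{Bx}{\|B\|}$, which is the claimed identity. Thus strict convexity is exactly what is responsible for the proportionality, completing the proof.
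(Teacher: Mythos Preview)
Your argument is correct, and for the main implication (i)$\Rightarrow$(ii) it is genuinely different from the paper's. The paper converts $A\parallel B$ via \eqref{rel.N-P.BJ} into $A\perp_{BJ}(\|B\|A+\lambda\|A\|B)$, invokes Lemma~\ref{lemma.2.0199} to produce $x\in\mathbb{M}_A$ with $Ax\perp_{BJ}(\|B\|Ax+\lambda\|A\|Bx)$, then separately checks $x\in\mathbb{M}_B$ by testing the Birkhoff--James inequality at the scalar $\mu=-1/\|B\|$, and finally converts back with \eqref{rel.N-P.BJ}. You instead pick a norm-attaining unit vector for the compact operator $A+\lambda B$ (using reflexivity of $\mathcal{X}$) and read off $x\in\mathbb{M}_A\cap\mathbb{M}_B$ and $Ax\parallel Bx$ directly from equality in the triangle inequality. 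Your route is shorter, avoids Lemma~\ref{lemma.2.0199} entirely, and---as you rightly observe---shows that the equivalence (i)$\Leftrightarrow$(ii) holds without the smoothness of $\mathcal{Y}$ and without the structural assumption on $\mathbb{M}_A$; those hypotheses enter the paper's proof only through the cited lemma. The strict convexity of $\mathcal{Y}$ is used in both proofs solely for the proportionality addendum, and your treatment of that part coincides with the paper's.
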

\begin{proof}
Let $A\parallel B$. By (\ref{rel.N-P.BJ}), there exists $\lambda\in\mathbb{T}$ such that $A\perp_{BJ}\big(\|B\|A + \lambda \|A\|B\big)$.
Noting that since $\mathcal{X}$ is a reflexive Banach space and $A\in \mathbb{K}(\mathcal{X}, \mathcal{Y})$ we have $\mathbb{M}_A \neq\emptyset$.
It follows from Lemma \ref{lemma.2.0199} that there exists a vector $x\in \mathbb{M}_A$ such that $Ax\perp_{BJ} \big(\|B\|Ax + \lambda \|A\|Bx\big)$.
Hence, $\Big\|Ax + \mu \big(\|B\|Ax + \lambda \|A\|Bx\big)\Big\|\geq \|Ax\|$ for all $\mu \in \mathbb{R}$. Let $\mu = - \frac{1}{\|B\|}$. Then
$$\Big\|Ax - \frac{1}{\|B\|}\big(\|B\|Ax + \lambda \|A\|Bx\big)\Big\|\geq \|Ax\|.$$
Thus $\|A\|\|Bx\|\geq \|B\|\|Ax\|$. Since $\|Ax\| = \|A\|$, we get
$\|Bx\|\geq \|B\|$. So $\|B\|=\|Bx\|$ and hence $x\in \mathbb{M}_B$. Since $Ax\perp_{BJ} \big(\|B\|Ax + \lambda \|A\|Bx\big)$, we get
$Ax\perp_{BJ} \big(\|Bx\|Ax + \lambda \|Ax\|Bx\big)$. Again applying (\ref{rel.N-P.BJ}), we obtain $Ax\parallel Bx$.
Then $\|Ax + \alpha Bx\| = \|Ax\| + \|Bx\|$ for some $\alpha \in\mathbb{T}$, which by the strict convexity of $\mathcal{Y}$ we get $Ax = \pm cBx$ with $c>0$. Consequently $c = \frac{\|Ax\|}{\|Bx\|} = \frac{\|A\|}{\|B\|}$. Thus $\frac{Ax}{\|A\|} =\frac{\pm cBx}{c\|B\|} = \pm \frac{Bx}{\|B\|}$.

The converse is obvious.
\end{proof}
\begin{remark} \label{re.1.1888}
If $\mathcal{H}$ is a real finite-dimensional Hilbert space, then $\mathbb{B}(\mathcal{H}) = \mathbb{K}(\mathcal{H})$.
It is well known that $\mathbb{M}_A \neq \emptyset$ for every $A\in \mathbb{K}(\mathcal{H})$.
Also, it is easy to see that either $\mathbb{M}_A$ is connected or $\mathbb{M}_A = \{-u, +u\}$ for some unit vector $u\in \mathcal{X}$. So,
as an immediate consequence of Theorem \ref{th.17.5}, we get Theorem 2.13 of \cite{Z.M.2}.
\end{remark}
\begin{corollary}\label{cr.20}
Let $\mathcal{X}, \mathcal{Y}$ be real reflexive Banach spaces. Let $\mathcal{Y}$ be smooth and strictly convex and there exist $[\cdot, \cdot]:\mathcal{Y}\times \mathcal{Y}\rightarrow\mathbb{R}$ a semi
inner product generating its norm. Let $A, B\in \mathbb{K}(\mathcal{X}, \mathcal{Y})$ and $\mathbb{M}_A$ be either connected or $\mathbb{M}_A = \{-u, +u\}$ for some unit vector $u\in \mathcal{X}$. If $A\parallel B$, then
$$\|B\| = \sup\big\{|[Bx, y]|: \,\, x\in \mathbb{S}_\mathcal{X}, y\in \mathbb{S}_\mathcal{Y}, Ax\parallel y\big\}.$$
\end{corollary}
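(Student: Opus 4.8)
The plan is to denote the right-hand side by $S$ and prove $S=\|B\|$ by two inequalities. The bound $S\le\|B\|$ is immediate from the defining axioms of the semi-inner product: for any $x\in\mathbb{S}_\mathcal{X}$ and $y\in\mathbb{S}_\mathcal{Y}$, property $(4)$ (the Cauchy--Schwarz property) gives $|[Bx,y]|\le\|Bx\|\,\|y\|\le\|B\|\,\|x\|\,\|y\|=\|B\|$, and taking the supremum over all admissible pairs yields $S\le\|B\|$. This step uses nothing beyond property $(4)$ and imposes no condition on $\mathcal{X}$ or $\mathcal{Y}$.

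For the reverse inequality I would exploit the full strength of Theorem \ref{th.17.5}, whose hypotheses coincide exactly with those assumed here. Since $A\parallel B$, that theorem produces a vector $x\in\mathbb{M}_A\cap\mathbb{M}_B$ with $Ax\parallel Bx$ and, crucially, with $\frac{Ax}{\|A\|}=\pm\frac{Bx}{\|B\|}$. Setting $y:=\frac{Ax}{\|A\|}$, I first record that $y\in\mathbb{S}_\mathcal{Y}$ (because $x\in\mathbb{M}_A$ forces $\|Ax\|=\|A\|$) and that $x\in\mathbb{S}_\mathcal{X}$; moreover $Ax=\|A\|\,y$ is a scalar multiple of $y$, so $Ax\parallel y$ and hence the pair $(x,y)$ is admissible in the supremum defining $S$. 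The alignment from Theorem \ref{th.17.5} rewrites as $Bx=\pm\|B\|\,y$, so by properties $(1)$ and $(2)$ of the semi-inner product
\[
[Bx,y]=\big[\pm\|B\|\,y,\,y\big]=\pm\|B\|\,[y,y]=\pm\|B\|\,\|y\|^2=\pm\|B\|.
\]
Thus $|[Bx,y]|=\|B\|$, whence $S\ge\|B\|$, and combining with the first step gives $S=\|B\|$, the supremum being attained at this very pair.

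There is no serious obstacle here: the entire analytic weight --- the existence of a common norm-attaining vector $x$ together with the alignment $\frac{Ax}{\|A\|}=\pm\frac{Bx}{\|B\|}$ --- is already carried by Theorem \ref{th.17.5}, which in turn rests on Lemma \ref{lemma.2.0199}. The only point needing a moment's care is verifying that the witnessing pair $(x,y)$ satisfies the constraint $Ax\parallel y$, so that it legitimately enters the supremum; this is automatic once $y$ is chosen proportional to $Ax$. The smoothness and strict convexity of $\mathcal{Y}$, the reflexivity of both spaces, and the connectedness (or two-point) hypothesis on $\mathbb{M}_A$ are used only indirectly, through the application of Theorem \ref{th.17.5}.
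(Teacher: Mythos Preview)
Your proof is correct and follows essentially the same approach as the paper: both arguments bound the supremum above by $\|B\|$ via the Cauchy--Schwarz property of the semi-inner product, then invoke Theorem~\ref{th.17.5} to produce a witness $x\in\mathbb{M}_A\cap\mathbb{M}_B$ with $\frac{Ax}{\|A\|}=\pm\frac{Bx}{\|B\|}$, set $y=\frac{Ax}{\|A\|}$, and compute $|[Bx,y]|=\|B\|$. The only cosmetic difference is that you extract the scalar using linearity in the first slot (writing $Bx=\pm\|B\|y$), whereas the paper uses homogeneity in the second slot (writing $y=\pm\frac{Bx}{\|B\|}$); both yield the same value.
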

\begin{proof}
Obviously, we have
$$\sup\big\{|[Bx, y]|: \,\, x\in \mathbb{S}_\mathcal{X}, y\in \mathbb{S}_\mathcal{Y}, Ax\parallel y\big\} \leq \|B\|.$$
Due to $A\parallel B$, by Theorem \ref{th.17.5}, there exists a vector $x_0\in \mathbb{M}_A\cap \mathbb{M}_B$ such that $\frac{Ax_0}{\|A\|} = \pm \frac{Bx_0}{\|B\|}$.
Put $x := x_0$ and $y := \frac{Ax_0}{\|Ax_0\|}$. Then $x\in \mathbb{S}_\mathcal{X}, y\in \mathbb{S}_\mathcal{Y}$ and $Ax\parallel y$. We have
$$\big|[Bx, y]\big| = \Big|\Big[Bx_0, \frac{Ax_0}{\|Ax_0\|}\Big]\Big| = \Big|\Big[Bx_0, \pm \frac{Bx_0}{\|B\|}\Big]\Big| = \frac{\|Bx_0\|^2}{\|B\|} = \|B\|.$$
Thus the supremum is attained.
\end{proof}
Since every finite dimensional normed space is reflexive on which
every linear operator is compact, as a consequence of Theorem \ref{th.17.5}, we have the following result.
\begin{corollary}\label{cr.2012}
Let $\mathcal{X}$ be a finite dimensional real normed space and $\mathcal{Y}$ be any real normed space. Assume $A, B\in \mathbb{B}(\mathcal{X}, \mathcal{Y})$ and $\mathbb{M}_A = \mathbb{S}_\mathcal{X}$.
Then the following statements are equivalent:
\begin{itemize}
\item[(i)] $A\parallel B$.
\item[(ii)] There exists $x\in \mathbb{M}_B$ such that $Ax\parallel Bx$.
\end{itemize}
\end{corollary}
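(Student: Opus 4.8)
The plan is to treat the two implications separately: to obtain (ii)$\Rightarrow$(i) from the elementary observation recorded earlier in this section, and to establish (i)$\Rightarrow$(ii) by a direct compactness argument that exploits the very strong hypothesis $\mathbb{M}_A=\mathbb{S}_\mathcal{X}$.

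For (ii)$\Rightarrow$(i) I would argue as follows. Suppose some $x\in\mathbb{M}_B$ satisfies $Ax\parallel Bx$. Since $\mathbb{M}_A=\mathbb{S}_\mathcal{X}$ and $x\in\mathbb{M}_B\subseteq\mathbb{S}_\mathcal{X}$, we automatically have $x\in\mathbb{M}_A\cap\mathbb{M}_B$. The implication already noted above---namely, that $x\in\mathbb{M}_A\cap\mathbb{M}_B$ together with $Ax\parallel Bx$ forces $\|A+\lambda B\|=\|A\|+\|B\|$ and hence $A\parallel B$---then gives the conclusion at once.

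For the reverse implication (i)$\Rightarrow$(ii), which is the substantive direction, I would first record that $\mathbb{M}_A=\mathbb{S}_\mathcal{X}$ means $\|Ax\|=\|A\|$ for every $x\in\mathbb{S}_\mathcal{X}$. Assuming $A\parallel B$, I would fix $\lambda\in\mathbb{T}$ with $\|A+\lambda B\|=\|A\|+\|B\|$. Because $\mathcal{X}$ is finite dimensional, $\mathbb{S}_\mathcal{X}$ is compact and the continuous function $x\mapsto\|(A+\lambda B)x\|$ attains its maximum; I would select $x_0\in\mathbb{S}_\mathcal{X}$ with $\|(A+\lambda B)x_0\|=\|A+\lambda B\|$. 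The proof is then completed by the squeeze
$$\|A\|+\|B\|=\|Ax_0+\lambda Bx_0\|\leq\|Ax_0\|+\|Bx_0\|\leq\|A\|+\|B\|,$$
in which the last inequality uses $\|Ax_0\|=\|A\|$ (from $x_0\in\mathbb{M}_A$) together with $\|Bx_0\|\leq\|B\|$. Equality throughout forces both $\|Bx_0\|=\|B\|$, so that $x_0\in\mathbb{M}_B$, and $\|Ax_0+\lambda Bx_0\|=\|Ax_0\|+\|Bx_0\|$, so that $Ax_0\parallel Bx_0$. Thus $x_0$ is exactly the vector demanded by (ii).

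I do not expect any serious obstacle here. Although the corollary is advertised as a consequence of Theorem~\ref{th.17.5}, the condition $\mathbb{M}_A=\mathbb{S}_\mathcal{X}$ is so restrictive that the smoothness and strict convexity of $\mathcal{Y}$ assumed there become unnecessary: the membership $x_0\in\mathbb{M}_A$ comes for free, and finite dimensionality alone supplies the norm-attaining vector $x_0$. The only genuine step is this compactness selection, after which the triangle-inequality squeeze does all the work.
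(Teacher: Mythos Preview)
Your argument is correct and complete. Both directions are handled cleanly: the implication (ii)$\Rightarrow$(i) is exactly the elementary observation recorded just before Lemma~\ref{lemma.2.0199}, and for (i)$\Rightarrow$(ii) your compactness-plus-squeeze argument is airtight.

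The paper, by contrast, offers no explicit proof: it simply declares the result to be ``a consequence of Theorem~\ref{th.17.5}'' together with the fact that finite-dimensional spaces are reflexive and all operators on them are compact. But this appeal is not entirely unproblematic, since Theorem~\ref{th.17.5} requires $\mathcal{Y}$ to be smooth and strictly convex, whereas the corollary allows $\mathcal{Y}$ to be \emph{any} real normed space. Your direct argument sidesteps this issue completely: as you correctly observe, the hypothesis $\mathbb{M}_A=\mathbb{S}_\mathcal{X}$ is so strong that the smoothness and strict convexity of $\mathcal{Y}$ (needed in Theorem~\ref{th.17.5} to invoke Lemma~\ref{lemma.2.0199}) become unnecessary --- every unit vector is already in $\mathbb{M}_A$, so the only work is to locate a vector in $\mathbb{M}_B$, and finite-dimensionality of $\mathcal{X}$ supplies it. Your route is therefore both more elementary and more self-contained than the paper's stated justification.
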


\textbf{Acknowledgement.} M. S. Moslehian (the corresponding author) was supported by a grant from Ferdowsi University of Mashhad (No. 1/43523).

\bibliographystyle{amsplain}

\end{document}